\newtheorem{thm}{Theorem}
\newtheorem{lm}{Lemma}
\newtheorem{re}{Remark}
\newtheorem{crl}{Corollary}
\author{Vassily Olegovich Manturov}
\date{}
\title{Additional Gradings in Khovanov homology}
\begin{document}

\maketitle

\begin{abstract}

The main goal of the present paper is to construct new invariants of
knots with additional structure by adding new gradings to the
Khovanov complex. The ideas given below work in the case of virtual
knots, closed braids and some other cases of knots with additional
structure. The source of our additional grading may be topological
or combinatorial; it is axiomatised for many partial cases. As a
byproduct, this leads to a complex which in some cases coincides (up
to grading renormalisation) with the usual Khovanov complex and in
some other cases with the Lee-Rasmussen complex.

The grading we are going to construct behaves well with respect to
some generalisations of the Khovanov homology, e.g., Frobenius
extensions. These new homology theories give sharper estimates for
some knot cha\-rac\-te\-ris\-tics, such as minimal crossing number,
atom genus, slice genus, etc.

Our gradings generate a natural filtration on the usual Khovanov
complex. There exists a spectral sequence starting with our homology
and converging to the usual Khovanov homology.
\end{abstract}

\section{Introduction}

\newcommand{\skcrossr}{\raisebox{-0.25\height}{\includegraphics[width=0.5cm]{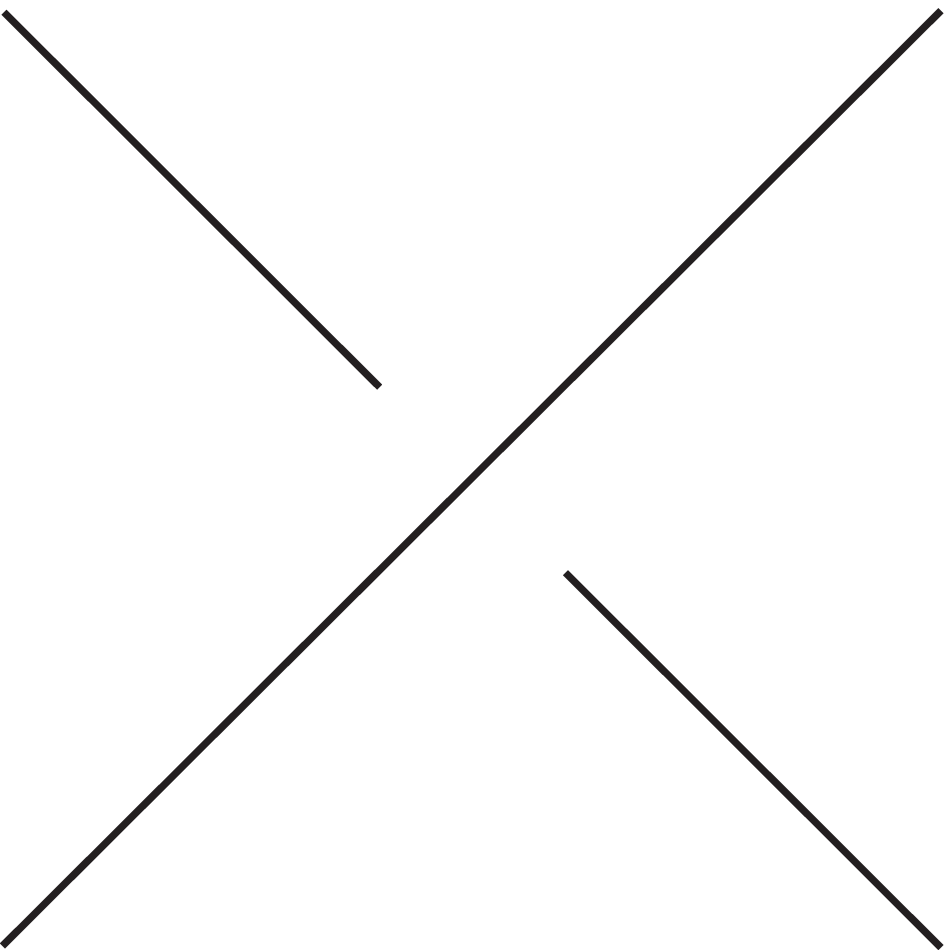}}}
\newcommand{\skcrossl}{\raisebox{-0.25\height}{\includegraphics[width=0.5cm]{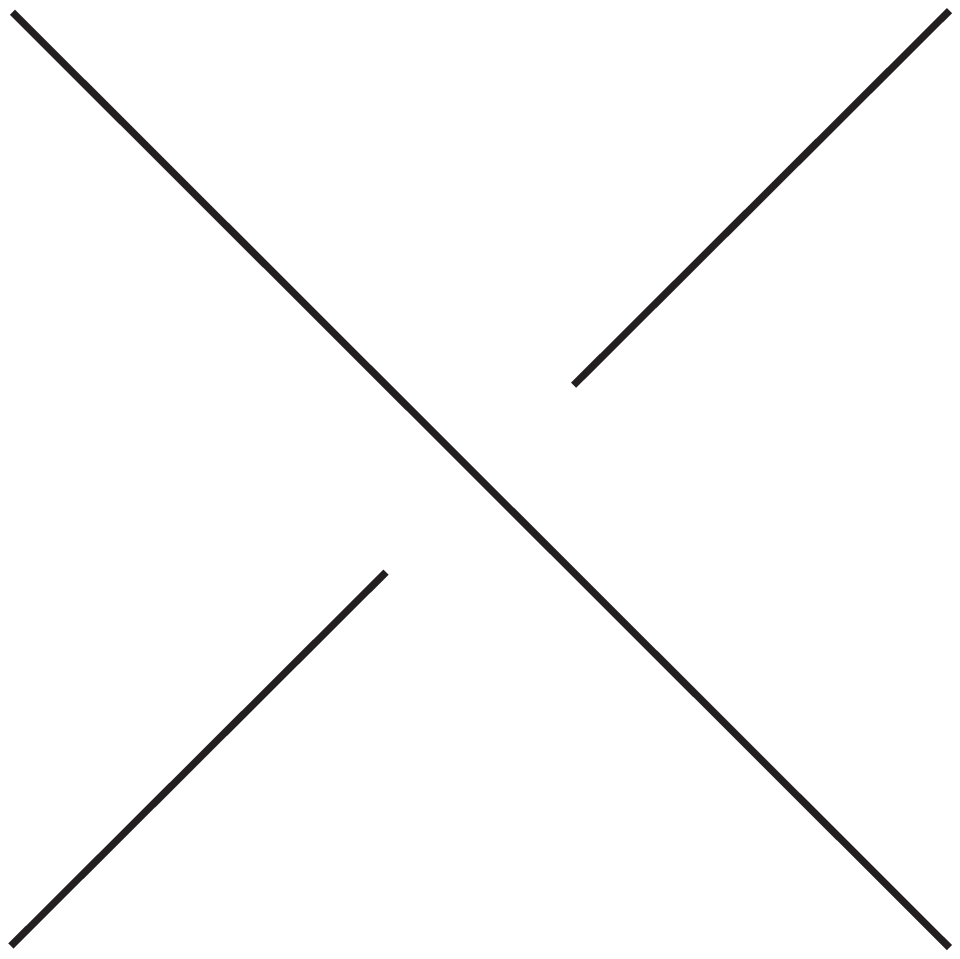}}}
\newcommand{\skkinkr}{\raisebox{-0.25\height}{\includegraphics[width=0.5cm]{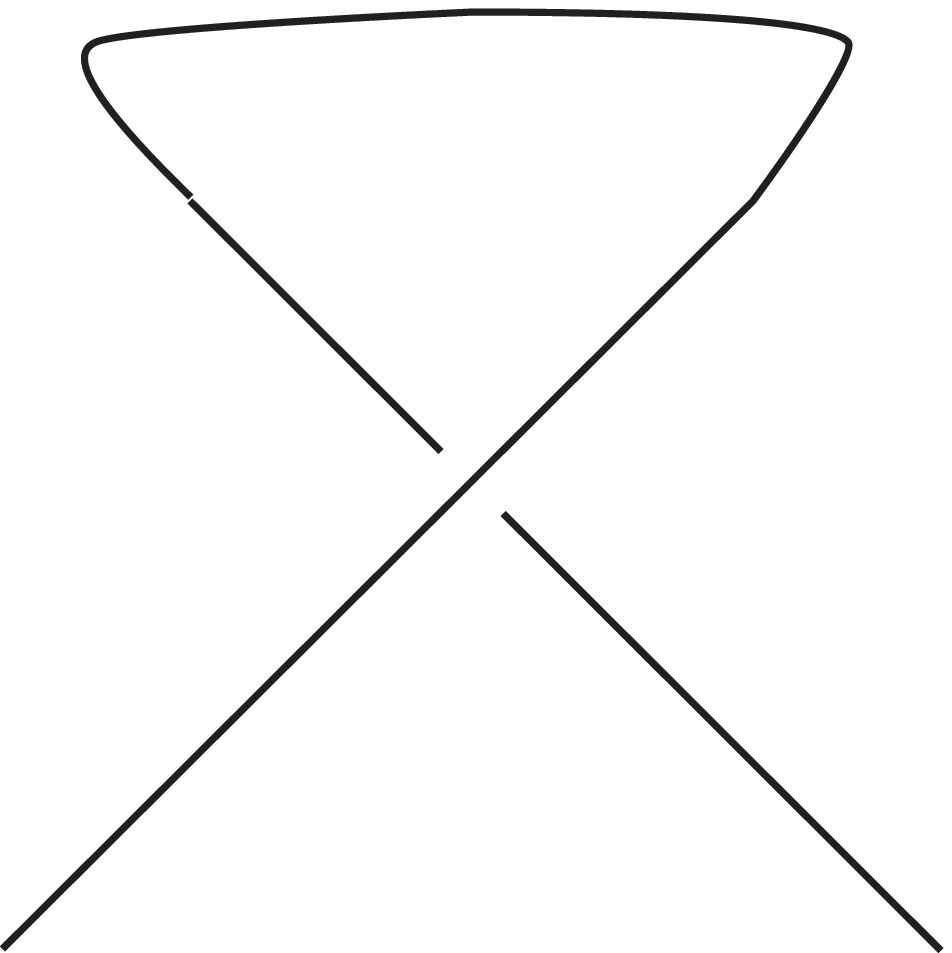}}}
\newcommand{\skkinkl}{\raisebox{-0.25\height}{\includegraphics[width=0.5cm]{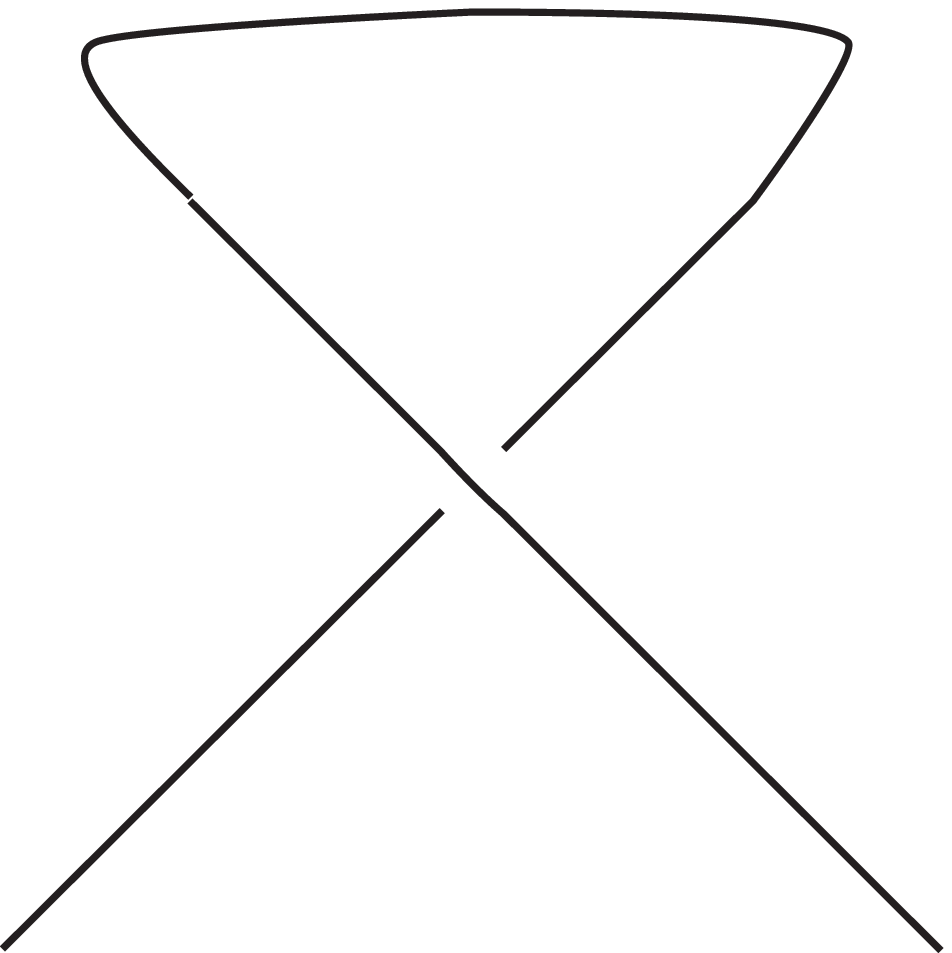}}}
\newcommand{\skroh}{\raisebox{-0.25\height}{\includegraphics[width=0.5cm]{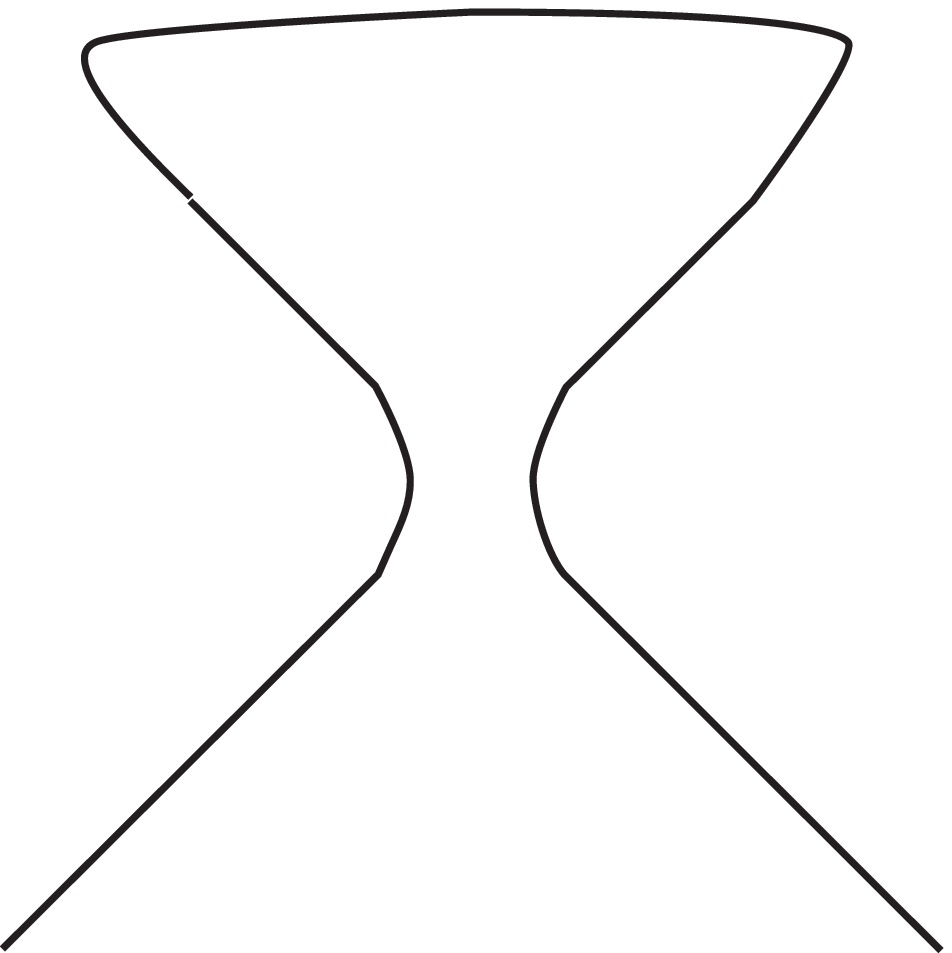}}}
\newcommand{\skrov}{\raisebox{-0.25\height}{\includegraphics[width=0.5cm]{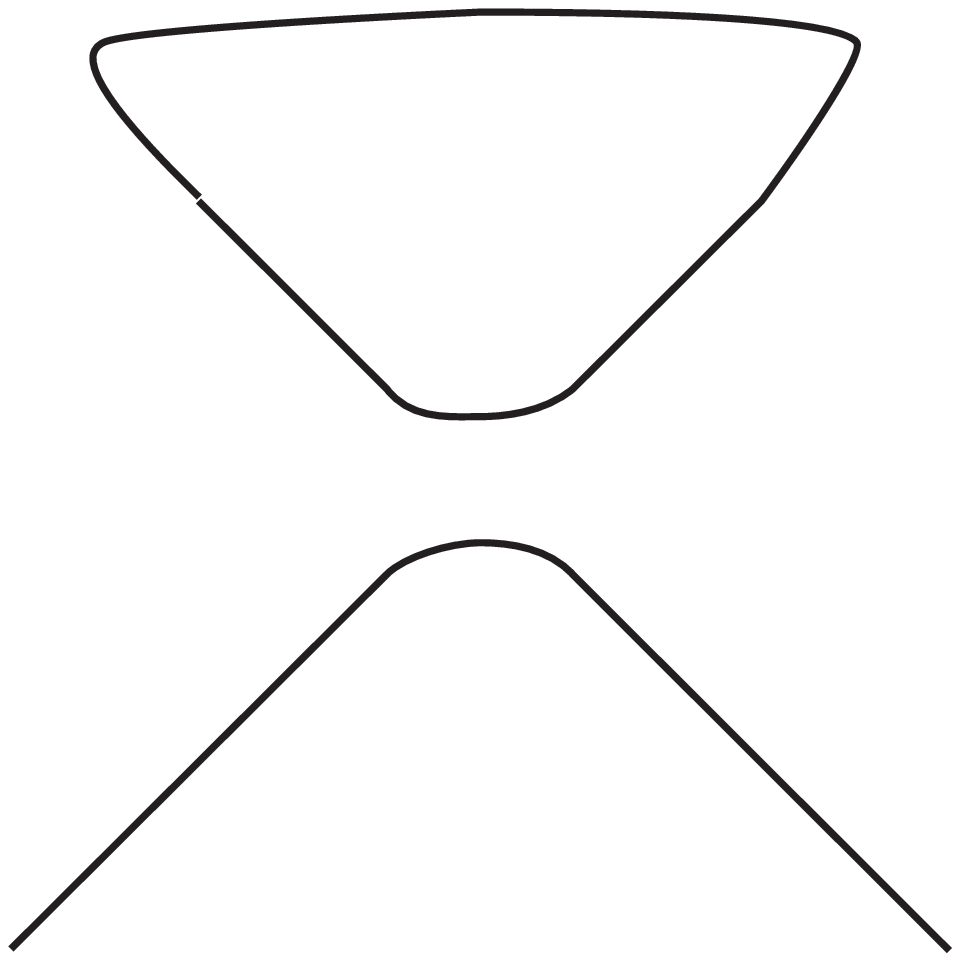}}}
\newcommand{\skrtwhh}{\raisebox{-0.25\height}{\includegraphics[width=0.5cm]{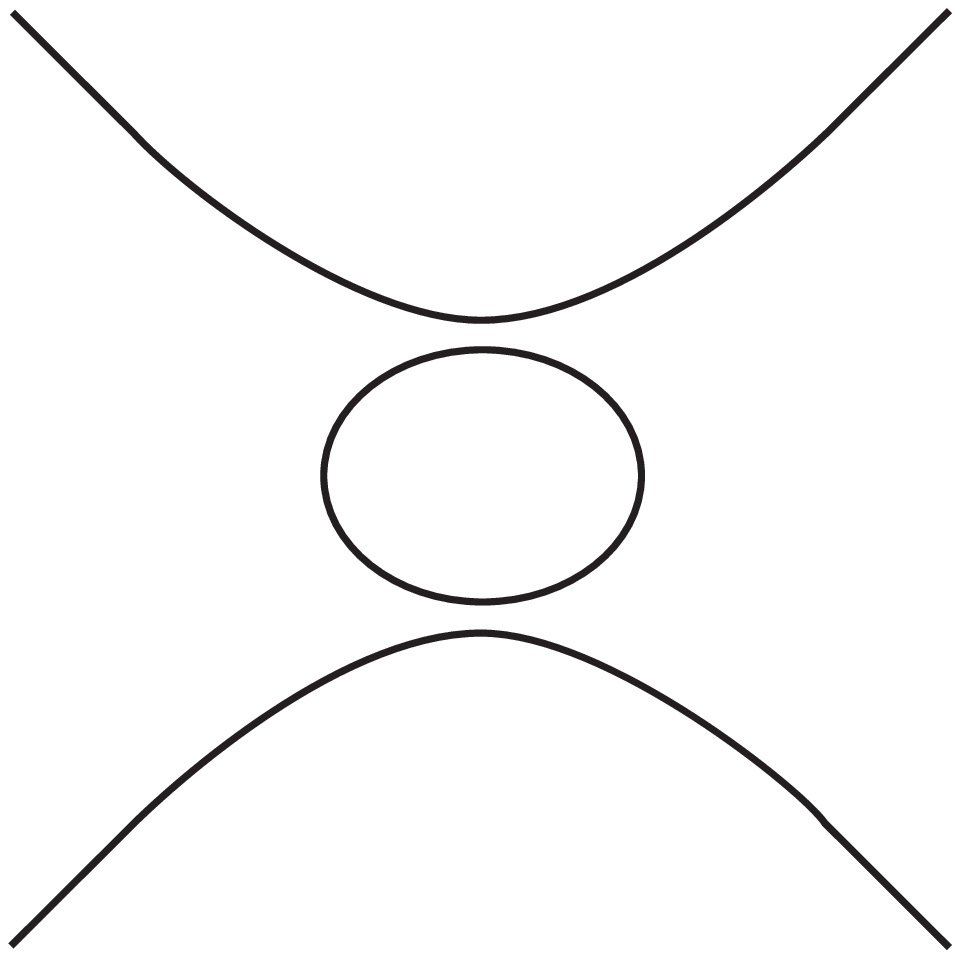}}}
\newcommand{\skrtwvh}{\raisebox{-0.25\height}{\includegraphics[width=0.5cm]{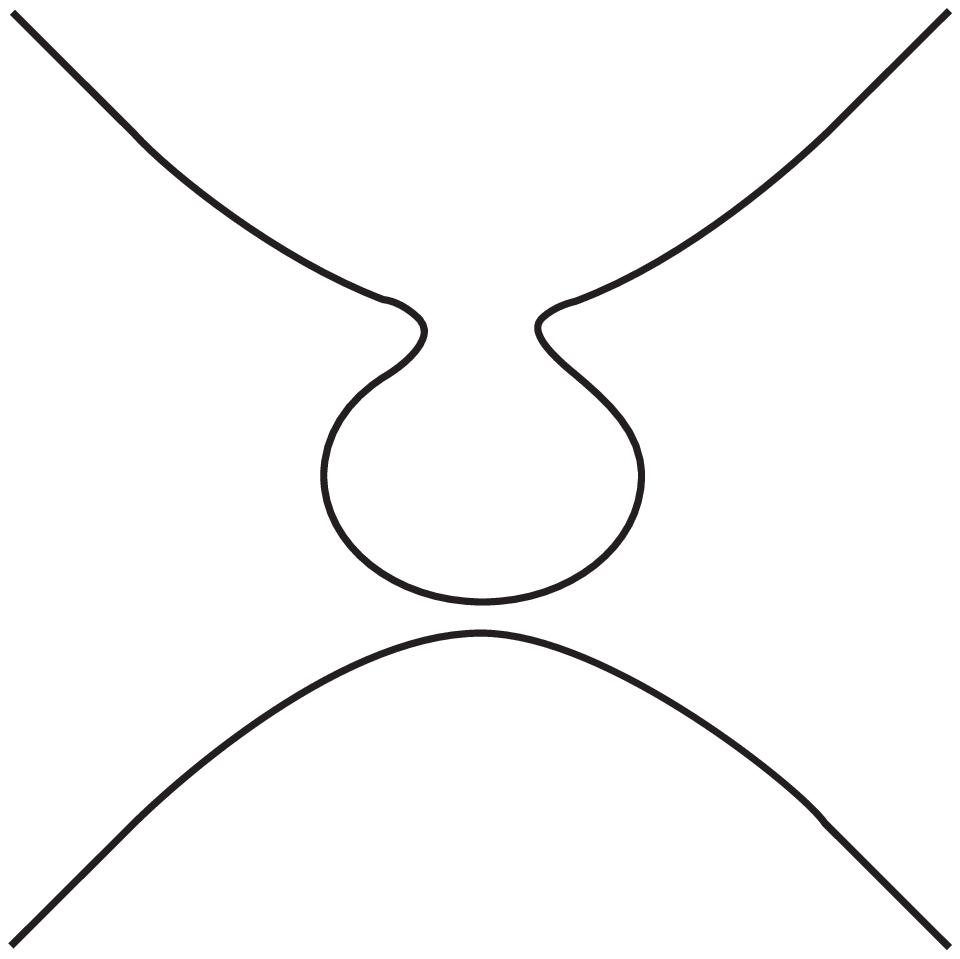}}}
\newcommand{\skrtwhv}{\raisebox{-0.25\height}{\includegraphics[width=0.5cm]{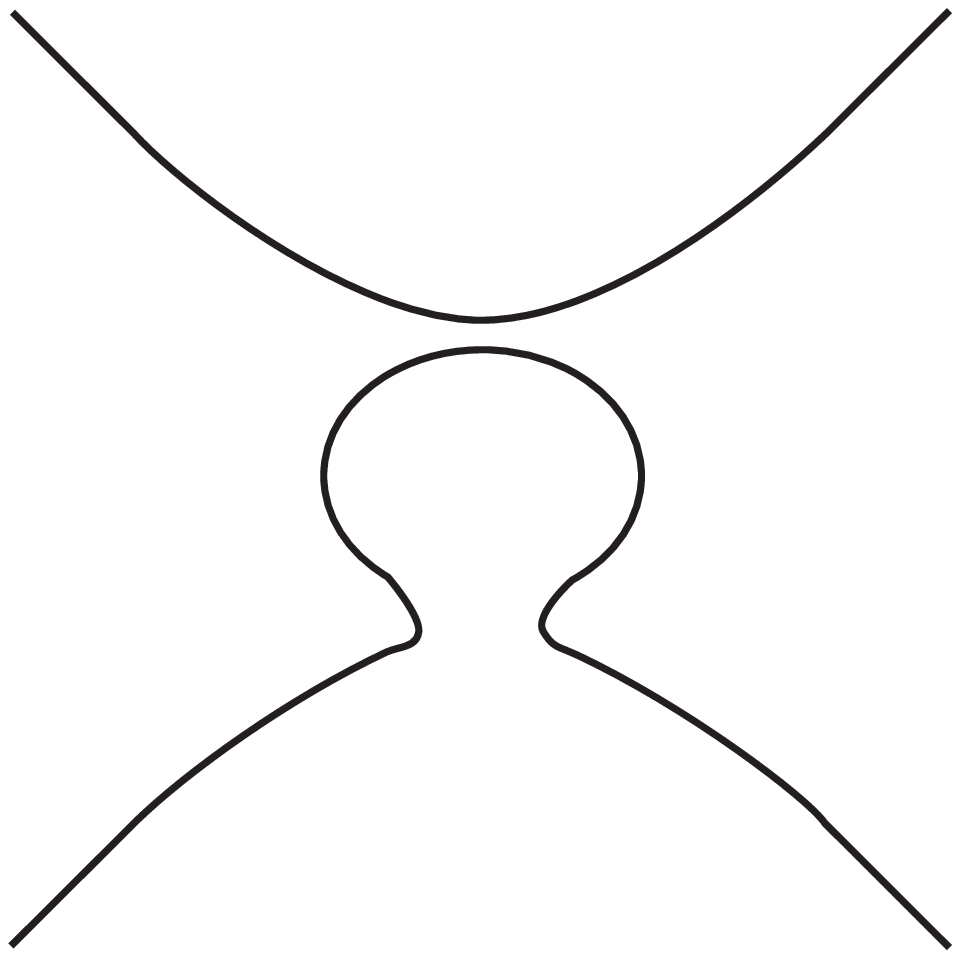}}}
\newcommand{\skrtwvv}{\raisebox{-0.25\height}{\includegraphics[width=0.5cm]{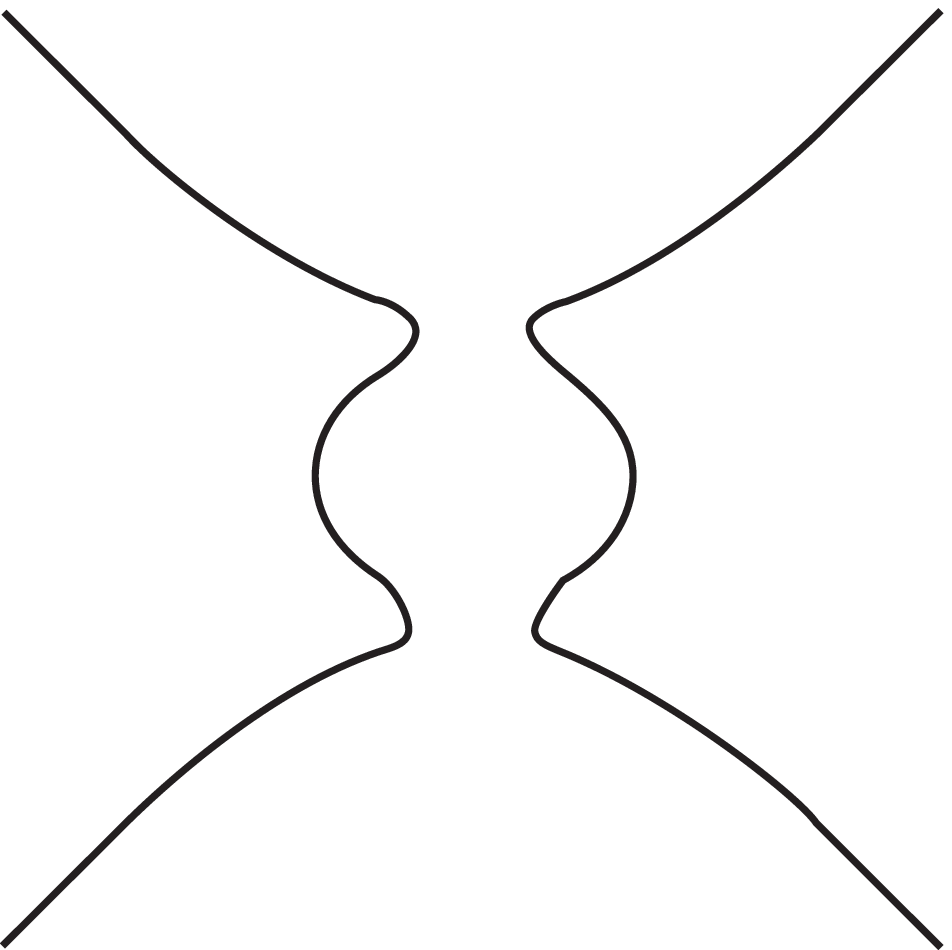}}}
\newcommand{\skcrv}{\raisebox{-0.25\height}{\includegraphics[width=0.5cm]{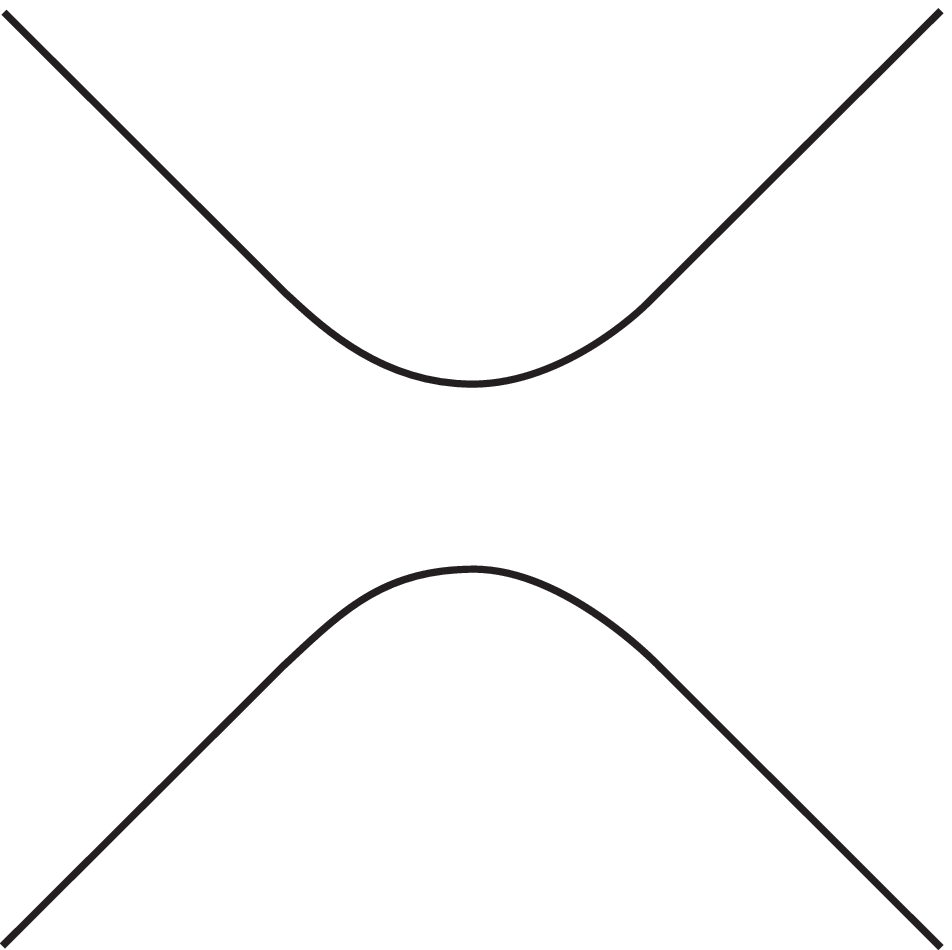}}}
\newcommand{\skcrh}{\raisebox{-0.25\height}{\includegraphics[width=0.5cm]{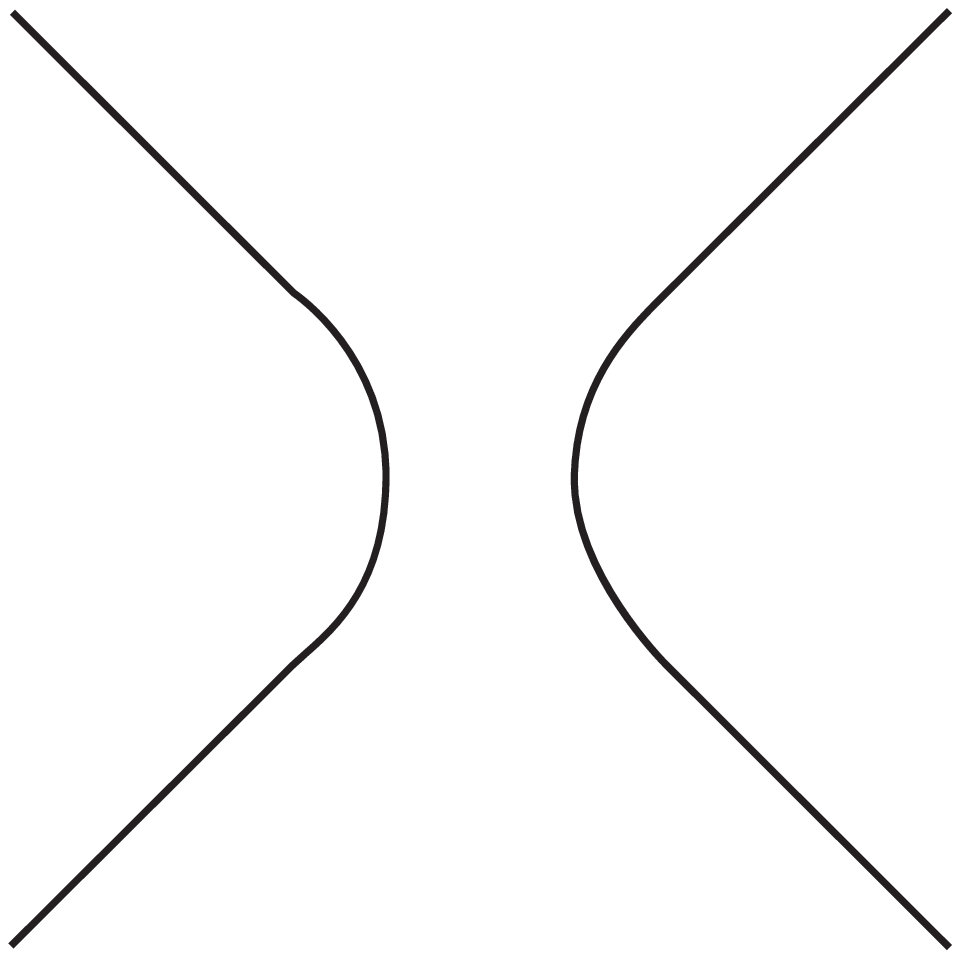}}}
\newcommand{\vcross}{\raisebox{-0.25\height}{\includegraphics[width=0.5cm]{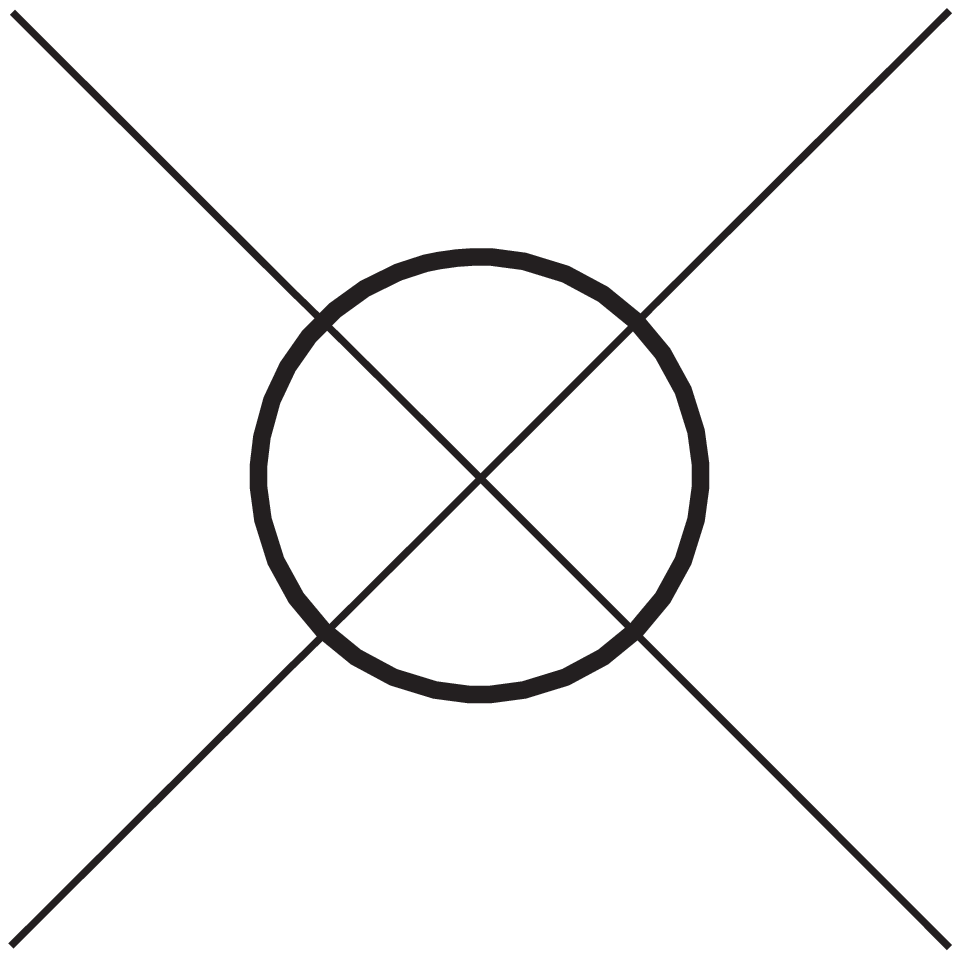}}}
\newcommand{\skcurl}{\raisebox{-0.25\height}{\includegraphics[width=0.5cm]{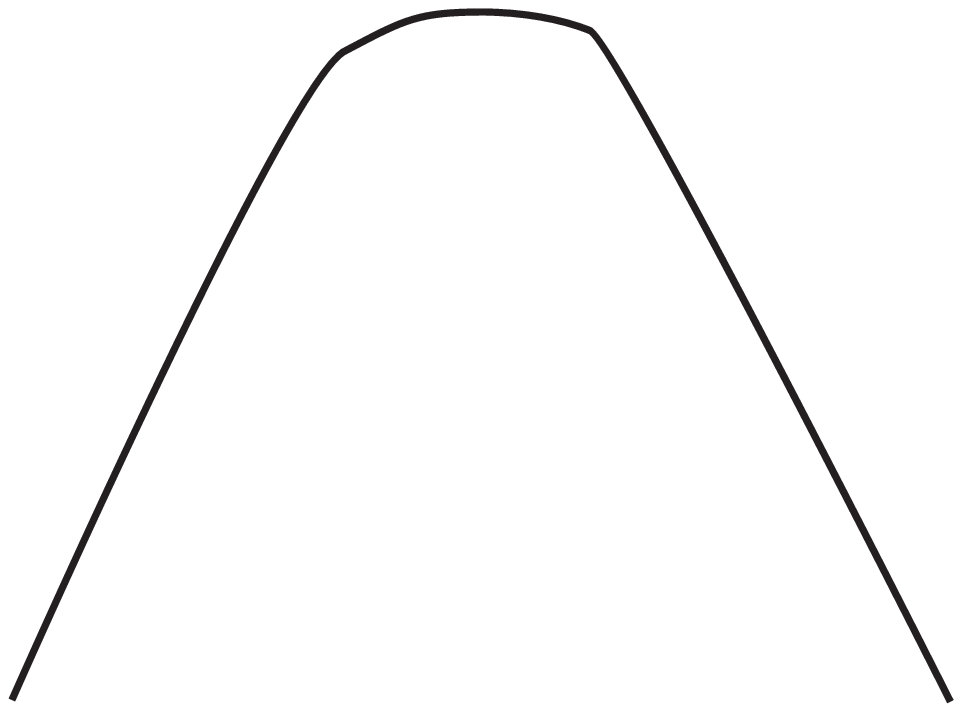}}}
\newcommand{\skkinkv}{\raisebox{-0.25\height}{\includegraphics[width=0.5cm]{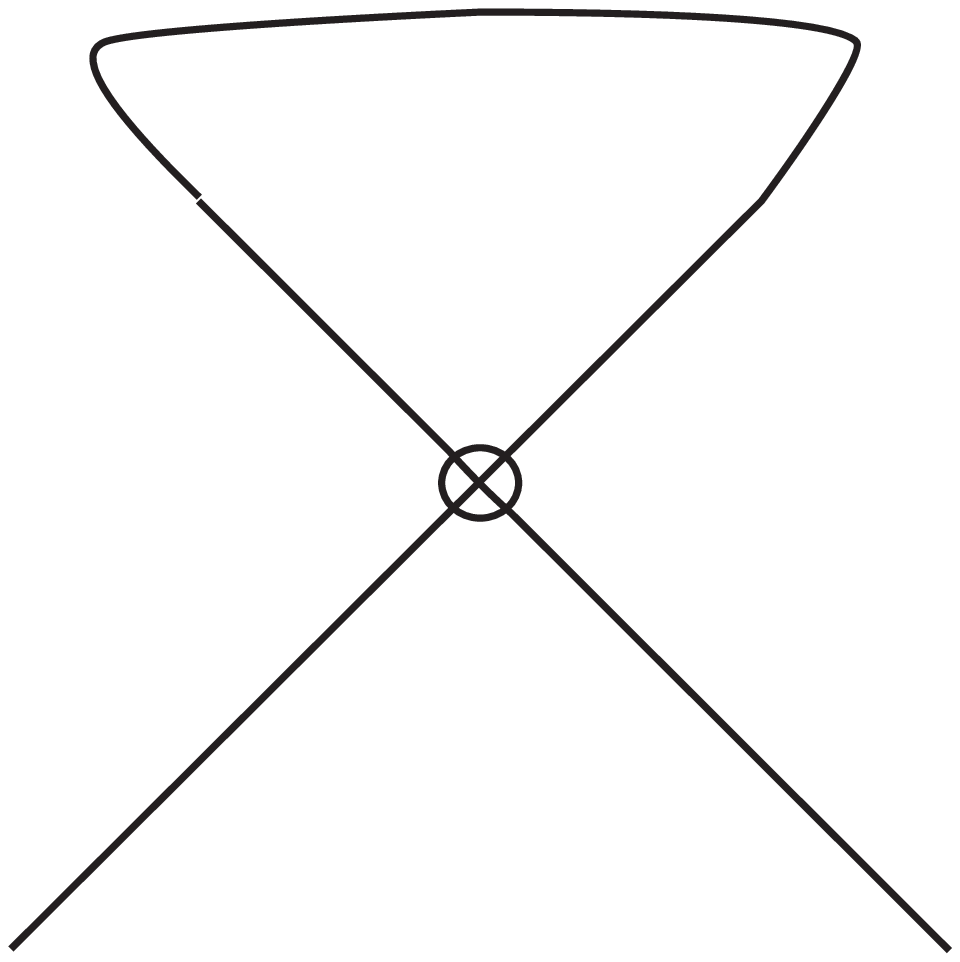}}}

In the last few years, the invention of link homology (Khovanov
homology, Ozsv\'ath-Szab\'o invariants, and also papers by
Rasmussen, Khovanov-Rozansky, Manolescu-Ozsv\'ath-Sarkar-Thurston
and others) brought many constructions from algebraic topology to
knot theory and low-dimensional topology.

Such theories take a representative of a low-dimensional diagram
(say, knot diagram or Heegaard diagram of a 3-manifold) and
associate a certain complex with this. The homology of this complex
is independent of the choice of repre\-sen\-tative, thus the
homology defines an invariant of knot (resp., 3-manifold, knot in a
manifold). Such algebraic complexes have different gradings, and
this allows one to construct filtrations and spectral sequences. The
behaviour of such spectral sequences is often closely connected to
some topological property of knots/3-manifolds. A nice example is
the work of Rasmussen \cite{Ras} estimating the Seifert genus from
Khovanov homology and giving a simple proof of Milnor's conjecture.
Another example is the work by K.Kawamura \cite{Kaw}, who sharpened
the Morton-Franks-Williams estimate for the braiding index.

There is also an approach to estimate the minimal crossing number,
see \cite{Minimal}.

We shall mainly concentrate on Khovanov homology. In a sequence of
recent papers, the author generalized Khovanov's theory from knots
in ${\bf R}^3$ to knots in arbitrary thickened $2$-surfaces (up to
stabilisation, giving virtual knots (by Kauffman, \cite{KaV}) or
twisted knots (by Bourgoin,\cite{Bou})).

Virtual knots, besides their ``knottedness'' also carry some
information about the topology of the underlying surface.

Thus, it would be quite natural to take into account some
topological data to introduce into Khovanov homology to make the
latter stronger. This idea was also used in the paper by Asaeda,
Przytycki, Sikora  \cite{APS}. We shall discuss the interaction
between the present work and the work \cite{APS} later.


The main idea goes as follows: Assume we have a well-defined complex
made out of some knot diagram. Consider the chain spaces ${\cal
C}(K)$ and the differential $\partial$. It turns out that in some
cases it is possible to introduce a new grading $gr$ that splits the
differential $\partial$ into two parts
$\partial=\partial'+\partial''$ in such a way that:

\begin{enumerate}

\item $\partial'$ preserves the new grading, whence $\partial''$
increases the new grading;

\item $(C,\partial')$ is a well-defined complex;

\item the homology of $(C,\partial')$ is invariant (under
Reidemeister moves);

\item there is a spectral sequence with $E^{1}=H(C,\partial)$
converging to the usual Khovanov homology (the latter differential
is taken with respect to $\partial$).

\end{enumerate}

The new gradings have a topological nature: they correspond to
cohomology classes.


This will guarantee that the complex is well defined. However, the
gradings may be of any other (say, combinatorial) nature; the only
thing we need is that for the Kauffman bracket states, there are two
sorts of circles which behave nicely with respect to the
Reidemeister moves.

The latter condition guarantees not only that the complex is well
defined (that is, $\partial'$ is indeed a differential) but also the
invariance under Reidemeister moves.

Varying this construction, one can construct further complexes with
dif\-fe\-ren\-ti\-als of type $\partial'+\lambda\partial''$, where
$\lambda$ can be a coefficient or some operator.

The outline of the present paper is the following. In the next
section, we define the Kauffman bracket, virtual knots, and Khovanov
homology (with arbitrary coefficients) for virtual links and
classical links (which actually constitute a proper part of virtual
links).

Section 3 will be devoted to our main example: categorifying the
Bourgoin invariant with the only one new grading corresponding to
the first Stiefel-Whitney class for oriented thickenings of
non-orientable surfaces.

The proof of the invariance theorem is given in section 4; it indeed
contains all ingredients for the proof of the main theorem to follow
in section 5, where we have multiple gradings of various types and
present more examples.

Section 5 also contains the axiomatics for these new gradings and
examples what they can be applied to: braids, cables, tangles, long
knots etc.

Section 6 devoted is to a generalisation of Khovanov's Frobenius
structure. From this point of view, one can think of Lee's homology
as a partial case of Khovanov's Frobenius theory as well as our new
theory. As a byproduct, we present yet another definition of the
Khovanov theory where the usual gradings are treated from our
``dotted grading viewpoint''.

In section 7, we focus on gradings and filtrations. We discuss the
Frobenius construction due to Khovanov, which is then followed by
spectral sequences, and Lee-Rasmussen invariants.

Section 8 is devoted to applications of the theory constructed and
gene\-ra\-li\-sations of some classical constructions in this
context

Section 9 is devoted to the discussion and open questions.

\subsection{Acknowledgments}

I am very grateful to O.Ya.Viro for various stimulating discussions.
During the writing process of that paper, I also had very useful
dis\-cus\-si\-ons with L.H.Kauff\-man, V.A.Vassiliev, M.Khovanov,
J.H.Przytycki, V.G.Turaev.

\section{Preliminaries: Virtual knots, Kauffman bracket, atoms, and Khovanov homology}

We think of knot diagrams\footnote{We refer both to knots and links
by using a generic term ``link''.} as a collection of (classical)
crossings on the plane somehow connected by arcs, see Fig.
\ref{clasknot}.

\begin{figure}
\centering\includegraphics[width=200pt]{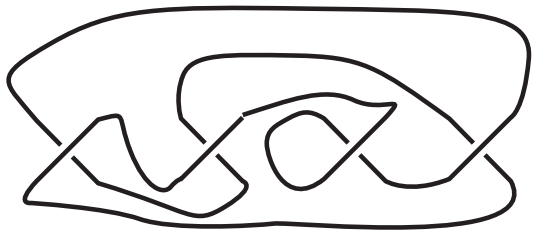} \caption{A
knot diagram} \label{clasknot}
\end{figure}

Knots are such diagrams modulo Reidemeister moves, but sometimes it
happens that for a given setup of crossings we are unable to connect
them by arcs in an appropriate way; this will lead to a diagram
called {\em virtual} with artefacts of the projection encircled, see
Fig. \ref{virtknot}.

\subsection{Atoms and Knots}

A four-valent planar graph $\Gamma$ generates a natural checkerboard
colouring of the plane by two colours (adjacent components of the
complement ${\bf R}^{2}\backslash \Gamma$ have different colours).

This construction perfectly describes the role played by {\em
alternating diagrams} of classical knots. Recall that a link diagram
is {\em alternating} if while walking along any component we
alternate over= and underpasses. Another definition of an
alternating link diagram sounds as follows: fix a checkerboard
colouring of the plane (one of the two possible colourings). Then,
for every vertex the colour of the region corresponding to the angle
swept by going from the overpass to the underpass in the
counterclockwise direction is the same.

Thus, planar graphs with natural colourings somehow correspond to
alternating diagrams of knots and links on the plane: starting with
a graph and a colouring, we may fix the rule for making crossings:
if two edges share a black angle, then the we decree the left one
(with respect to the clockwise direction) to form an overcrossing,
and the right one to be an undercrossing, see Fig. \ref{overunder}.
Thus, colouring a couple of two opposite angles corresponds to a
choice of a pair of opposite edges to form an overcrossing and vice
versa.

\begin{figure}
\centering\includegraphics[width=200pt]{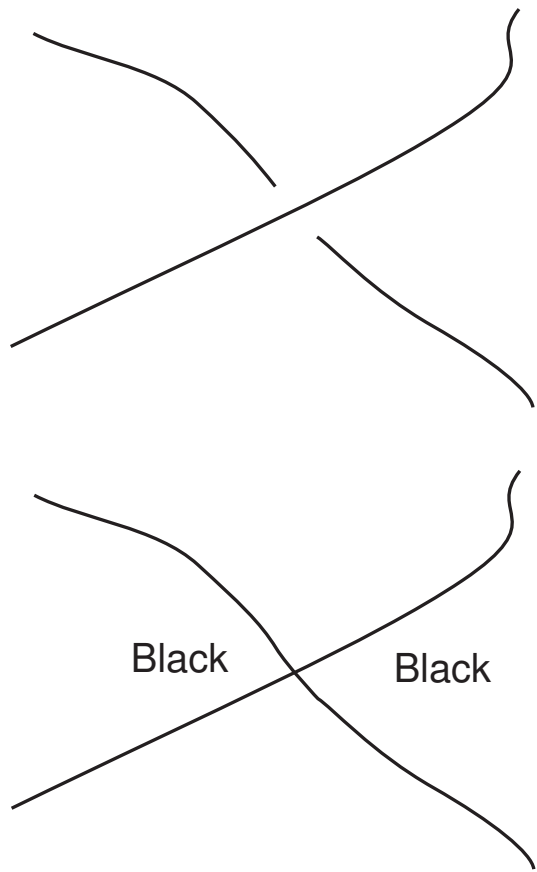} \caption{A
crossing corresponding to a vertex of an atom} \label{overunder}
\end{figure}

Now, if we take an arbitrary link diagram and try to establish the
colouring of angles according to the rule described above, we see
that generally it is impossible unless the initial diagram is
alternating: we can just get a region on the plane where colourings
at two adjacent angles disagree. So, alternating diagrams perfectly
match colourings of the $2$-sphere (think of $S^{2}$ as a one-point
compactification of ${\bf R}^{2}$). For an arbitrary link, we may
try to take colours and attach cells to them in a way that the
colours would agree, namely, the circuits for attaching two-cells
are chosen to be those rotating circuits, where we always turn
inside the angle of one colour.

This leads to the notion of {\em atom}. An {\em atom} is a pair
$(M,\Gamma)$ of a $2$-manifold $M$ and a graph $\Gamma$ embedded $M$
together with a colouring of $M\backslash \Gamma$ in a checkerboard
manner. Here $\Gamma$ is called the {\em frame} of the atom, whence
by {\em genus} (resp., {\em Euler characteristic}) of the atom we
mean that of the surface $M$.

Note that the atom genus is also called the {\em Turaev genus},
\cite{TuraevGenus}.

Certainly, such a colouring exists if and only if $\Gamma$
represents the trivial ${\bf Z}_{2}$ homology class in $M$.

Thus, gluing cells to some turning circuits on the diagram, we get
an atom, where the shadow of the knot plays the role of the frame.
Note that the structure of opposite half-edges on the plane
coincides with that on the surface of the atom.

Now, we see that atoms on the sphere are precisely those
corresponding to alternating link diagrams, whence non-alternating
link diagrams lead to atoms on surfaces of a higher genus.

In some sense, the genus of the atom is a measure of how far a link
diagram is from an alternating one, which leads to generalisations
of the celebrated Kauffman-Murasugi theorem, see \cite{MyBook} and
to some estimates concerning the Khovanov homology \cite{Minimal}.

Having an atom, we may try to embed its frame in ${\bf R}^{2}$ in
such a way that the structure of opposite half-edges at vertices is
preserved. Then we can take the ``black angle'' structure of the
atom to restore the crossings on the plane.

In \cite{AtomsAndKnots} it is proved that the link isotopy type does
not depend on the particular choice of embedding of the frame into
${\bf R}^{2}$ with the structure of opposite edges preserved. The
reason is that such embeddings are quite rigid.

The atoms whose frame is embeddable in the plane with opposite
half-edge structure preserved are called {\em height} or vertical.

However, not all atoms can be obtained from some classical knots.
Some abstract atoms may be quite complicated for its frame to be
embeddable into ${\bf R}^{2}$ with the opposite half-edges structure
preserved. However, if it is impossible to {\em immerse} a graph in
${\bf R}^{2}$, we may embed it by marking artifacts of the embedding
(we assume the embedding to be generic) by small circles.

A {\em virtual diagram} is a four-valent graph on the plane with two
types of crossings: classical $\skcrossr$ or $\skcrossl$ (for which
we mark which pair of opposite edges form an overpass) and virtual
$\vcross$ (which are just marked by a circled crossing).

A {\em virtual link} is an equivalence class of virtual diagrams
modulo generalised Reidemeister moves. The later consist of usual
Reidemeister moves and the detour move. The detour move removes an
arc virtually connecting some points $A$ and $B$ (that is, having no
classical crossings inside) restores another connection between $A$
and $B$ with several virtual intersections and self-intersections,
see Fig. \ref{detour}.

\begin{figure}
\centering\includegraphics[width=200pt]{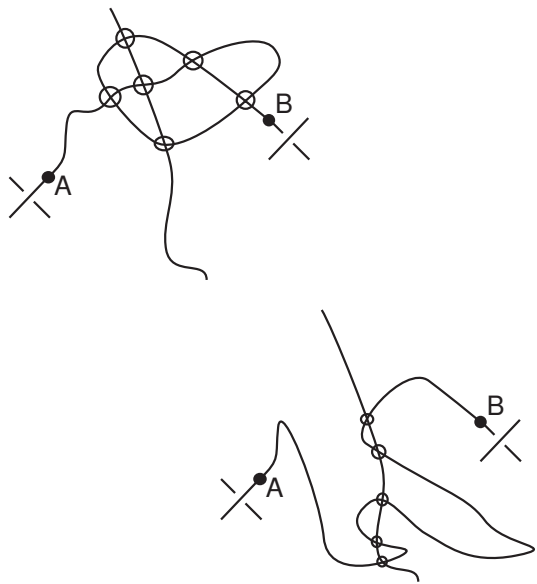} \caption{The
detour move} \label{detour}
\end{figure}

This move just means that it is inessential to indicate {\em which
curves connect classical crossings}, it is important only to know
how these crossings are paired.

\begin{figure}
\centering\includegraphics[width=200pt]{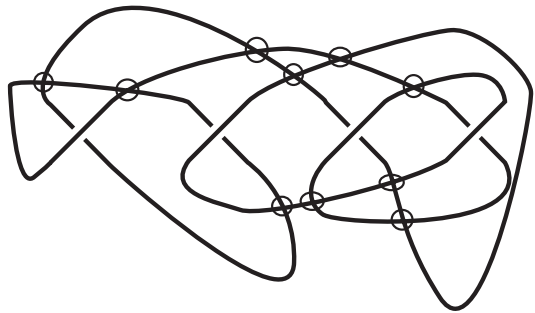} \caption{A
virtual diagram} \label{virtknot}
\end{figure}

Considering these diagrams modulo usual Reidemeister moves and the
detour moves (see ahead), we get what are called {\em virtual
knots}. The detour move is the move removing an arc (possibly, with
self-intersections) containing only virtual crossing, and adding
another arc connecting the same points elsewhere.

Virtual knots, being defined diagrammatically, have a topological
interpretation. They correspond to knots in thickened surfaces
$S_{g}\times I$ with fixed $I$-bundle structure (later we will also
talk about oriented thickenings of non-orientable surfaces) up to
stabilisations/destabilisations. Projecting  $S_{g}$ to ${\bf
R}^{2}$ (with the condition, however, that all neighbourhoods of
crossings are projected with respect to the orientation, we get from
a generic diagram on $S_g$ a diagram on ${\bf R}^2$: besides the
usual crossings arising naturally as projections of classical
crossings, we get virtual crossings, which arise as artefacts of the
projection: two strands lie in different places on $S_g$ but they
intersect on the plane because they are forced to do so.

Having a (virtual) knot diagram, we can smooth all classical
crossings of it in the following two ways:
$A:\mbox{\skcrossr}\to\mbox{\skcrh}$ and $B:\mbox{\skcrossr}\to
\mbox{\skcrv}$.

Thus, for a diagram $L$ with $n$ classical crossings we have $2^{n}$
{\em states}. Every state is  a way of smoothing  all (classical)
crossings. Enumerate all classical crossings by $1,\dots, n$. Then
the states can be regarded as vertices of the discrete cube
$\{0,1\}^{n}$, where $0$ and $1$ correspond to the $A$-smoothing and
the $B$-smoothing, respectively. In each state we have a collection
of circles representing an unlink. We call this cube {\em the state
cube} of the diagram $L$.

Then any for any state $s$ we have its height $\beta(s)$ being the
number of crossings resolved positively, $\alpha(s)=n-\beta(s)$
being the number of crossings resolved negatively, and the number
$\gamma$ of closed circles.

Then the Kauffman bracket is defined as

\begin{equation}
\sum_{s}a^{\alpha(s)-\beta(s)}(-a^{2}-a^{-2})^{\gamma(s)-1}
\label{kabrinit}
\end{equation}

This bracket is invariant under all Reidemeister moves except for
the first one.

The normalisation $X(K)=(-a)^{-3w(K)}\langle K\rangle$, where $w$ is
the writhe number, leads to the definition of the Jones polynomial.

The Kauffman bracket satisfies the usual relation

\begin{equation}
\langle \skcrossr \rangle=a\langle \skcrv\rangle+a^{-1}\langle
\skcrv\rangle \label{kabrrel}
\end{equation}

After a little variable change and renormalisation, the Kauffman
bracket can be rewritten in the following form:
\begin{equation}
\langle \skcrossr \rangle=\langle \skcrv\rangle-q\langle
\skcrv\rangle \label{kabr}
\end{equation}

\newcommand{\dg}{\mbox{deg}}

Here we consider bigraded complexes ${\cal C}^{ij}$ with {\em
height} (homological grading) $i$ and {\em quantum grading} $j$; the
differential preserves the quantum grading and increases the height
by $1$. The height and grading shift operations are defined as
$({\cal C}[k]\{l\})^{ij}={\cal C}[i-k]\{j-l\}$.\label{shifts}

This form is used as the starting point for the Khovanov homology.
Namely, we regard the factors $(q+q^{-1})$ as graded dimensions of
the module $V=\{1,X\}, \dg\; 1=1, \dg\; X=-1$ over some ring $R$,
and the height $\beta(s)$ plays the role of homological dimension.
Then, if we define the chain space $[[K]]_{k}$ of homological
dimension $k$ to be the direct sum over all vertices of $\beta=k$ of
$V^{\gamma(s)}\{k\}$ (here $\{\cdot\}$ is the quantum grading
shift), then the alternating sum of graded dimensions of
$[[K]]_{k}$, is precisely equal to the (modified) Kauffman bracket.

{\bf Thus, if we define a differential on $[[K]]$ preserving the
grading and increasing the homological dimension by $1$, the Euler
characteristic of that space would be precisely the Kauffman
bracket.}

\begin{re}
Later on, we shall not care about the normalisation of the complexes
by degree and height shifts to make their homology invariant under
the Reidemeister moves. It is done exactly as in \cite{Kh}.
\end{re}

We have defined the {\em state cube} consisting of circles and
carrying no information how these circles interact. Turning to
Khovanov homology, we shall deal with the same cube remembering the
information about the circle bifurcation. Later on, we refer to it
as a {\em bifurcation cube}.

The chain spaces of the complex are well defined. However, the
problem of finding a differential $\partial$ in the general case of
virtual knots, is not very easy. To define the differential, we have
to pay attention to different isomorphism classes of the chain space
identified by using some local bases.

The differential acts on the chain space as follows: it takes a
chain corresponding to a certain vertex of the bifurcation cube to
some chains corresponding to all adjacent vertices with greater
homological degree. That is, the differential is a sum of {\em
partial differentials}, each partial differential acts along an edge
of the cube. Every partial differential corresponds to some
direction and is associated with some classical crossing of the
diagram.

With each circle, we associate the tensor power of the space $V$ of
graded dimension $q+q^{-1}$, however, with no prefixed basis. With a
collection of circles, we shall associate the {\em exterior power}
of this space, as follows. With each state $s$ of height $b$, we
associate a basis consisting of $2^{\gamma(s)}$ chains. Now, we
order the circles in the state $s$ arbitrarily, fix an arbitrary
orientation on them and associate with each such circle either $1$
or $X$. With any such choice, consisting of a state, an ordering of
oriented circles and a set of elements $1$ and $X$, we associate a
chain of the complex. We can also associate elements $\pm 1$ or $\pm
X$ with any circle, which also defines a chain of our complex; this
chain differs from the corresponding chain with $1$ and $X$ by a
corresponding sign. Furthermore, we identify the chains according to
the following rule: the orientation change for one circle leads to a
sign change of a chain if this circle is marked by $\pm X$ and does
not change sign if the circle is marked by $\pm 1$; the permutation
of circles multiplies the chain by the sign of corresponding
permutation. This would correspond to taking exterior product of
vector spaces (graded modules) instead of their symmetric product.

Then for a state with $l$ circles, we get a vector space (module) of
dimension $2^{l}$. All these chains have homological dimension $b$.
We set the grading of these chains equals  $b$ plus the number of
circles marked by $1$ minus the number of circles marked by $X$.

Let us now define the partial differentials of our complex. First,
we think of each classical crossing so that its edges are oriented
upwards, as in Fig. \ref{figj}, upper right picture.

Choose a certain state of a virtual link diagram $L\subset {\cal
M}$. Choose a classical crossing $U$ of $L$. We say that in a state
$s$ a state circle $\gamma$ is incident to a classical crossing $X$
if at least one of the two local parts of smoothed crossing $X$
belongs to $\gamma$. Consider all circles
 $\gamma$ incident to  $U$. Fix some orientation of these circles according to
the orientation of the edge emanating in the upward-right direction
and opposite to the orientation of the edge coming from the bottom
left, see Fig. \ref{figj}. Such an orientation is well defined
except for the case when one edge corresponding to a vertex of the
cube, takes one circle to one circle. In such situation, we shall
not define the local basis  $\{1,X\}$; we set the partial
differential corresponding to the edge, to be zero.

\begin{figure}
\centering\includegraphics[width=300pt]{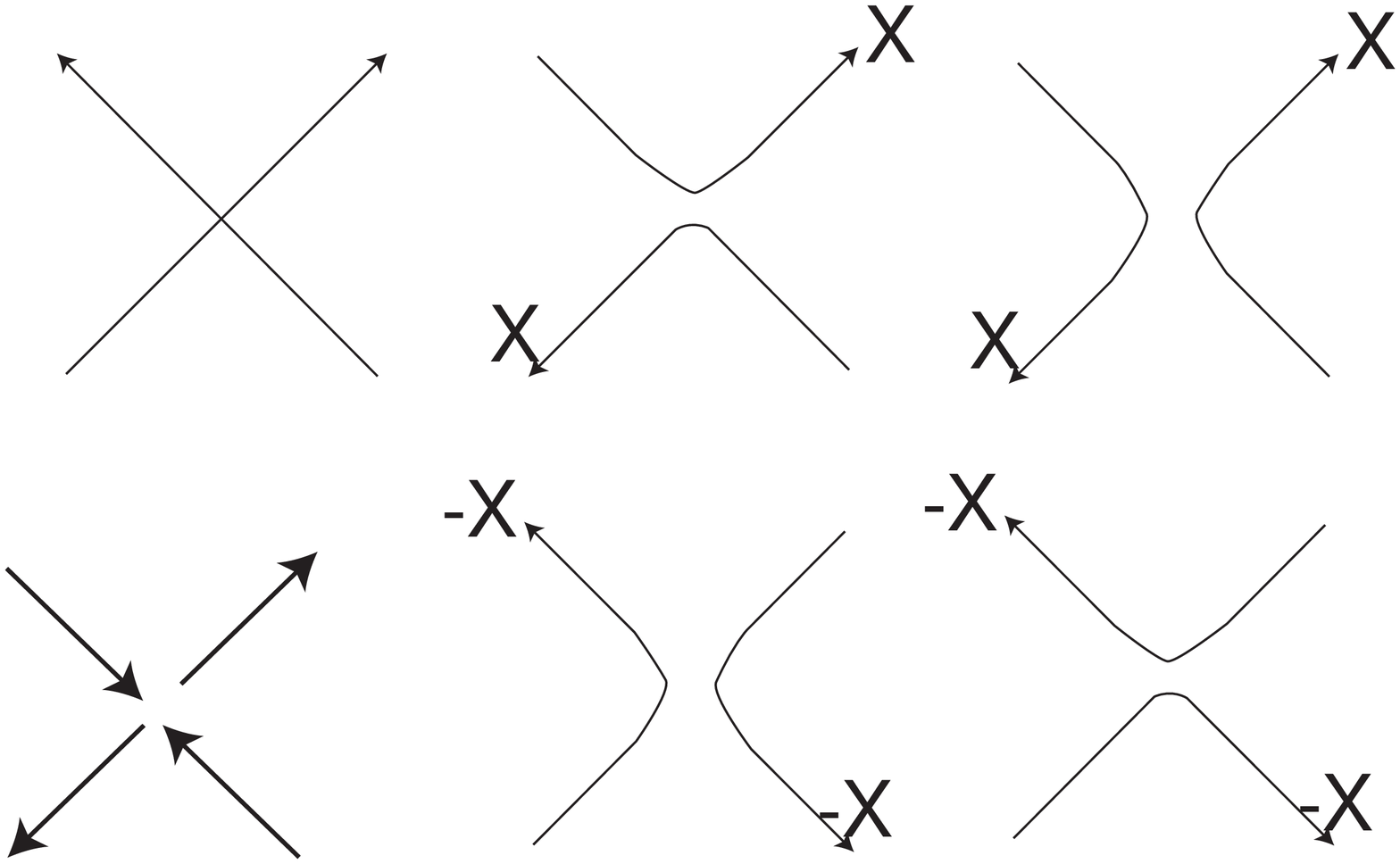} \caption{Setting
the local basis for a crossing} \label{figj}
\end{figure}


In the other situations, the edge of the cube corresponding to the
partial differential either increases or decreases the number of
circles. This means that at the corresponding crossing the local
bifurcation either takes two circles into one or takes one circle
into two. If we deal with two circles incident to a crossing from
opposite signs, we order them in such a way that the upper (resp.,
left) one is the first one; the lower (resp., right) one is the
second; here the notions ``left, right, upper, lower'' are chosen
according to the rule for identifying the crossing neighbourhood
with Fig. \ref{figj}. Furthermore, for defining the partial
differentials of types $m$ and $\Delta$ (which correspond to
decreasing/increasing the number of circles by one) we assume that
the circles we deal with are in the very initial poisitions in our
ordered tensor product; this can always be achieved by a preliminary
permutation, which, possibly leads to a sign change. Now, let us
define the partial differential locally according to the prescribed
choice of generators at crossings and the prescribed ordering.

Now, we describe the partial differentials $\partial'$ from
\cite{Izv} without new gradings. If we set $\Delta (1)=1_{1}\wedge
X_{2}+X_{1}\wedge 1_{2}; \Delta(X)=X_{1}\wedge X_{2}$ и
$m(1_{1}\wedge 1_{2})=1;m(X_{1}\wedge 1_{2})=m(1_{1}\wedge X_{2})=X;
m(X_{1}\wedge X_{2})=0$, define the partial differential $\partial'$
according to the rule $\partial'(\alpha\wedge \beta)=m(\alpha)\wedge
\beta$ (in the case we deal with a $2\to 1$-buifurcation, where
$\alpha$ denotes the first two circles $\alpha$) or
$\partial'(\alpha\wedge \beta)=\Delta(\alpha)\wedge \beta$ (when one
circle marked by $\alpha$ bifurcates to two ones); here by $\beta$
we mean an ordered set of oriented circles, not incident to the
given crossings; the marks on these circles $\pm 1$ and $\pm X$ are
given.


\begin{thm}\cite{Izv}
$[[K]]$ is a well-defined complex with respect to $\partial$; after
a small grading shift and a height shift, the homology is invariant
under generalised Reidemeister moves.
\end{thm}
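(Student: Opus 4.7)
The plan is to split the theorem into three parts handled in sequence: canonicality of the chain module, $\partial^{2}=0$, and invariance under the (generalised) Reidemeister moves. For the first part, I would verify directly that the identifications imposed on ordered, oriented wedges of $1$'s and $X$'s are self-consistent: reversing the orientation of one circle toggles the sign iff that circle carries $\pm X$, and permuting two circles yields the sign of the permutation. Taken together these are exactly the relations of the exterior algebra $\bigwedge V$ over the module $V=R\langle 1,X\rangle$, so the chain space at each vertex of the state cube is canonically identified with $\bigwedge^{\gamma(s)}V$ regardless of the auxiliary ordering and orientation data. A parallel check shows that $m$ and $\Delta$, defined on ordered wedges by the prescribed formulas, descend to well-defined maps on these canonical modules.

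The core of the argument is $\partial^{2}=0$, and I plan to verify it one face at a time on the bifurcation cube. A two-dimensional face corresponds to choosing two classical crossings and smoothing them independently; I must show that the two length-two compositions of partial differentials going around the square agree up to the sign forced by the wedge structure, so that they cancel. I would stratify the faces by the local combinatorics of how circles are shared between the two crossings: the generic case, where the two smoothings act on disjoint wedge factors, reduces immediately to anticommutation of $m$ and $\Delta$ on independent tensor slots; the special cases (a single circle incident to both crossings, a circle that is both created and destroyed, and the ``degenerate'' $1\to 1$ faces where one partial differential is set to zero) must be enumerated and checked by direct computation. This is where the virtual setting parts ways from the classical one, and the upper/left-first ordering rule from Fig.~\ref{figj} together with the wedge sign must be balanced delicately; I expect this case analysis to be the main technical obstacle, and the crux is that every degenerate face either produces two vanishing compositions or two opposite wedge-signed compositions that cancel.

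For Reidemeister invariance, the detour move is immediate because virtual crossings never enter the chain complex, so both sides of a detour yield literally the same complex. For R1, R2, R3 I plan to mimic Khovanov's original arguments from \cite{Kh}, producing explicit chain homotopy equivalences: R1 exhibits an acyclic summand whose complement is isomorphic to the appropriately shifted complex of the smaller diagram; R2 identifies a Gaussian-elimination cancellation pair and retracts onto the remainder; R3 follows from two applications of R2 or from the standard categorified braid argument. The only new bookkeeping relative to the classical case is to confirm the signs coming from the exterior product, which once checked in the R2 homotopy propagate automatically to R3, so beyond the face analysis of step two this part should be routine.
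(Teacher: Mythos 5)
Your plan follows the same route as the cited proof in [Izv]: set up the chain groups as exterior powers so that the orientation and ordering conventions are absorbed into wedge-product signs, verify $\partial^{2}=0$ face by face on the bifurcation cube, and then run the standard Khovanov-style retractions for R1, R2, R3 (the detour move being trivial). That matches the paper's approach. However, you defer precisely the one step that is genuinely new in the virtual setting, and it deserves to be displayed rather than advertised as ``the main technical obstacle.''

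Concretely, the nontrivial fact is not merely that ``degenerate faces either produce two vanishing compositions or two opposite wedge-signed compositions that cancel,'' but that the specific face of Fig.~\ref{norient} commutes \emph{because} of the orientation-dependence of $X$. There one side of the square is a composition of two $1\to 1$ bifurcations, which you have set to zero by fiat; consistency then demands that the other side, $m\circ\Delta$, vanish as well. On $X$ this is immediate ($m(X\wedge X)=0$), but on $1$ you get $\Delta(1)=1_{1}\wedge X_{2}+X_{1}\wedge 1_{2}$, and when you move the tensor factors into position for the second crossing the roles of the two circles swap \emph{and} one of them must be re-oriented, turning its $X$ into $-X$; only then does $m$ give $-X+X=0$. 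If you work with ordinary symmetric tensor powers and a fixed global $X$, this face yields $2X\neq0$ and the whole complex fails. So the exterior-product and oriented-basis conventions of your first paragraph are not bookkeeping conveniences to be ``confirmed'' later: they are the mechanism that makes the $1\to 1$ convention consistent, and your write-up should show at least this one face explicitly before appealing to a stratified case analysis for the rest. With that computation in place, the remainder of your plan (anticommutation on independent wedge slots for generic faces, acyclic-summand or Gaussian-elimination retractions for R1/R2, and R3 via R2) is the standard argument and goes through.
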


Later, when we have new gradings, the differential will be defined
just by projecting this differential to the grading-preserving
subspace, namely, $\tilde
\partial'\alpha=\mbox{pr}_{\mbox{deg}=\mbox{deg}\alpha}\partial'\alpha$,
where $\mbox{pr}_{\mbox{deg}=\mbox{deg}\alpha}$ is the projection to
the subspace having all additional gradings the same as $\alpha$.
After all, we shall define $\partial$ as the sum of partial
differentials $\tilde\partial'$. We will get a set of graded groups
$Kh'_{H}$ with differential $\partial$. This differential increases
the height (homological grading), preserves the grading, and does
not change the additional gradings.

\begin{re}
The homology theory described above is initially constructed out of
planar diagrams; thus, it represents a homology theory for links in
thickened surfaces {\em modulo stabilisation}; that is, this
homology theory ``does not feel'' removable handles. However, when
we impose new gradings, we will have to fix the thickened surface,
since we will deal with its homology groups. The new complex to be
constructed for such thickened surfaces, frankly speaking, would not
be a virtual link invariant. It would rather be an obstruction for
links in thickened surfaces to decrease the underlying genus of the
corresponding surface.
\end{re}

\subsection{Usual Khovanov homology}

For the case of classical knot theory (and also some parts of
virtual knot theory) the above setup is actually not needed for
constructing Khovanov homology. One can get the chain spaces
generated by tensor powers of $V$ with appropriate grading and
degree shifts, with no care about signs as it was done in the
original Khovanov paper \cite{Kh}. Namely, one takes just the
symmetric tensor power $V^{\otimes k}$ for a vertex of a cube with
$k$ circles in the corresponding state. One also need not care about
signs: the type-$X$ generators are chosen once forever. Then it
allows to construct partial differentials just by using some
concrete formulae for $\Delta$ and $\mu$. The main difficulty we had
to overcome was the case of $1\to 1$-type partial differentials. If
no such $1\to 1$-bifurcations occur then the original construction
works straightforwardly. Namely, after splicing some minus signs,
these formulae lead to a well defined complex whose homology is the
usual Khovanov homology.

The main goal of the present paper would be to find such additional
gradings.

\section{Bourgoin's twisted knots. \\ Additional gradings}

Assume for some category (knots, virtual knots, braids, tangles) we
have a well-defined Kauffman bracket. That is, we have a set of
(classical) crossings, which can be smoothed so that the formula
\eqref{kabr} can be applied.

Consider the following generalisation of virtual knots (proposed by
Mario Bourgoin, see \cite{Bou}).

We consider knots in oriented thickenings of $2$-surfaces, the
latter not necessarily orientable. Namely, we take a $2$-surface $M$
and fix the $I$-bundle ${\cal M}$ over $M$ which is oriented as a
total fibration space, and keep both the orientation and the
$I$-bundle structure fixed.\label{fibredfixed}

We consider knots and links in such surfaces up to
stabilisation/destabilisation and refer to them as {\em twisted
links}. Virtual links constitute a proper part of twisted links
\cite{Bou}.

Note that this theory encloses as a partial case the theory of knots
in ${\bf R}P^{3}$, since ${\bf R}P^{3}\backslash\{*\}$ is nothing
but the oriented thickening of ${\bf R}P^{2}$.

Any link in ${\cal M}$ has a projection to the base space, the
latter being a four-valent graph.

Since the surface is orientable (and even oriented), there is a
canonical way for defining the $A$-smoothing and the $B$-smothing
with respect to the orientation. Thus, the formula \eqref{kabr}
gives a well-defined Kauffman bracket for such objects, which turns
out to be invariant; the proof is standard, see, e.g. \cite{Kho}.

Moreover, {\bf the approach described in the previous section gives
a well-defined Khovanov homology theory}. To this end, we have to
establish the chain space and the differentials.

Fix a cell decomposition of $M$ with exactly one $2$-cell $C$ and
choose a canonical ``upward'' direction for $C$. Then we can treat
every crossing as a classical one, that is, identify its
neighbourhood with the local picture shown in Fig. \ref{figj}.

This allows to define $[[K]]$ literally as above, and we get the
following

\begin{thm}
For twisted knots the complex $[[K]]$ is a well-defined complex with
respect to $\partial'$; after a small grading shift and a height
shift, the homology is invariant under isotopy (the orientation of
the ambient space remains fixed together with the $I$-bundle
structure); the differential $\partial'$ increases the homological
grading by $1$ and preserves the quantum grading and the additional
gradings.
\end{thm}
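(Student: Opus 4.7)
The plan is to reduce the entire statement to the planar/virtual case already handled by the theorem of \cite{Izv} quoted in the previous section, by using the fixed cell decomposition with a single $2$-cell as a bookkeeping device. First I would fix the cell decomposition of $M$ with exactly one $2$-cell $C$ and its prescribed ``upward'' direction once and for all. A generic diagram of a twisted link $K\subset \mathcal{M}$ can then be pushed, by an isotopy inside $\mathcal{M}$, into the preimage of the $2$-cell $C$; since $\mathcal{M}$ is an \emph{oriented} $I$-bundle, the fiber orientation together with the upward direction on $C$ identifies a neighbourhood of every classical crossing with the local model of Fig.\ \ref{figj}. This identification is exactly the data required to define the local bases $\{1,X\}$ at each crossing and the ordering of circles incident to a crossing, so $[[K]]$ and its partial differentials $\partial'$ are defined verbatim as in the previous section.

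The next step is to check that $\partial'$ is indeed a differential, i.e.\ $(\partial')^{2}=0$. As in the classical case this is a local, face-by-face check on the bifurcation cube: for every pair of distinct crossings one needs the two compositions of partial differentials around a square to agree up to sign. Because the local pictures around two crossings live in two disjoint disks inside $C$, exactly the same Frobenius-algebra identities (associativity of $m$, coassociativity of $\Delta$, the Frobenius condition $\Delta\circ m = (m\otimes \mathrm{id})\circ(\mathrm{id}\otimes \Delta)$, and the trivial $1\to 1$-case) suffice; no new global issue is introduced by the possible non-orientability of $M$ since those disks do not see it.

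For invariance under isotopy I would treat the Reidemeister moves and the stabilisation/destabilisation separately. The classical Reidemeister moves $\Omega_{1},\Omega_{2},\Omega_{3}$ occur inside a disk, and, as before, this disk can be arranged to sit inside $C$; the explicit chain homotopies of \cite{Kh,Izv} are built entirely from the local bifurcation data, so they transfer literally, provided we again compensate the first Reidemeister move by the usual height and quantum grading shift depending on the writhe. The detour move involves only virtual crossings and no classical ones, so it does not change the state cube at all. Finally, an isotopy that moves a strand across a cell boundary of the chosen decomposition of $M$ is absorbed by choosing a different representative diagram inside $C$ via the $I$-bundle projection, and the resulting change of local bases at each crossing is a signed reordering which is already accounted for in the wedge-product conventions.

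The grading claim is essentially bookkeeping. By construction each $m$ or $\Delta$ partial differential changes the number of circles and the number of ``$X$''-labels in a way that raises the homological grading by one and preserves the quantum grading $\beta(s)+\#\{1\}-\#\{X\}$; any additional grading introduced later is designed so that $\partial'$ is defined by projection onto its level sets, and hence preserves it by definition. The main obstacle I expect is the \emph{consistency of local conventions} under global isotopy on a non-orientable $M$: one has to be sure that the fiber orientation of the oriented $I$-bundle $\mathcal{M}$, combined with the upward direction on $C$, always produces the same signs regardless of the path along which the diagram is transported. Making this precise is exactly where the hypothesis ``$\mathcal{M}$ oriented as a total space'' (stressed on page \pageref{fibredfixed}) does the essential work, and verifying it carefully for $\Omega_{1}$ and for trading a crossing across the $1$-skeleton of the decomposition of $M$ is the only genuinely new ingredient compared with \cite{Izv}.
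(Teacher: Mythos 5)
Your approach matches the paper's in spirit: the paper's actual ``proof'' is a single sentence observing that, once the cell decomposition with a unique $2$-cell $C$ and an upward direction are fixed, the construction of $[[K]]$ and $\partial'$ is \emph{literally} the one from the virtual case, and then delegating the face-by-face well-definedness checks and Reidemeister invariance (as well as independence of the choice of $C$ and of the upward orientation) entirely to \cite{Izv}. Your proposal is an elaboration of exactly that reduction, and the point you single out as the ``only genuinely new ingredient'' --- that the orientation of the total space $\mathcal{M}$, combined with the $I$-bundle structure and the upward direction on $C$, gives a consistent local model at every crossing --- is indeed the crux.

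Two imprecisions are worth fixing. First, you write that a generic diagram of $K\subset\mathcal{M}$ can be \emph{pushed by an isotopy into the preimage of the $2$-cell $C$}. This cannot be literally true when $M$ has nontrivial topology: the diagram is a $4$-valent graph on $M$ whose edges wrap around handles, and compressing it into a disk (or its $I$-preimage) would change the isotopy class. What is actually arranged is that all \emph{crossings} (the vertices of the $4$-valent graph, together with small disk neighbourhoods) lie in $C$; the arcs between them cross the $1$-skeleton freely. The local model of Fig.~\ref{figj} is then applied only to these crossing neighbourhoods, which is all that is needed to set up local bases and the partial differentials. Second, your discussion of ``an isotopy that moves a strand across a cell boundary'' is vague and conflates two distinct things: a planar isotopy of the diagram inside a fixed cell structure on $M$ (which does nothing to the complex), and a change of the cell decomposition of $M$ or of the upward direction on $C$. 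The latter is a genuine claim --- that the resulting homology is independent of the choice of $C$ and the upward direction --- which the paper attributes explicitly to \cite{Izv}; it deserves to be isolated as its own step rather than folded into the Reidemeister-move discussion.
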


As shown in \cite{Izv}, the homology of this complex does not depend
on the choice of $C$ and the upward orientation.

We should mention, that there have been a lot of generalisations of
the Kauffman bracket, see e.g. Kauffman-Dye \cite{KaV4}, Manturov
\cite{MaXiJKTR}, Miyazawa \cite{Miy}.

Each of these generalisations introduces something new to the
formula for the Kauffman bracket of either topological or
combinatorial nature.

Bourgoin proposed the following generalization of the Kauffman
bracket for such surfaces.

\begin{equation}
\sum_{s}a^{\alpha(s)-\beta(s)}M^{\gamma''(s)}(-a^{2}-a^{-2})^{\gamma'(s)}
\label{Boubr}
\end{equation}
where $\gamma'$ and $\gamma''$ correspond to the number of
orienting/non-orienting circles in the state $s$, respectively.

The goal of the present section is to describe how to categorify
this invariant and then see {\em which further examples will fit
into the construction}.

In the Khovanov setup, we had $(q+q^{-1})$ instead of
$(-a^{2}-a^{-2})$. What should we have instead of $M$?

What should be the vector space categorifying this variable. As can
be seen from Khovanov's algebraic reasonings, see \cite{Frobenius},
the space corresponding to one circle should be two-dimensional.

To preserve the similarity with the initial picture, it is
convenient to  make one generator $(1)$ of this space having quantum
grading
 $q$ and the other one (which might be $X$ or $-X$) having
quantum grading to $q^{-1}$.

This is the point where {\em new} gradings come into play: with
every non-orienting circle in a Kauffman state, we associate the
space of graded dimension $q g^{-1}+q^{-1} g$, where $g$ corresponds
to the new grading. At the uncategorified level, this just means
$M=q g^{-1}+q^{-1} g$, and thus we lose no information.


At the categorified level, this means that we introduce a new
grading for the spaces: for every non-orienting loop we associate a
$Z$-grading equal to $1$ if this loop is marked by $X$ and $-1$ if
this loop is marked by $1$. For orienting loops, we have no new
gradings.

Now, let us define the new grading ($g$-grading) for the complex
$[[K]]$ as the sum of all new gradings over all non-orienting
circles.

Denote the obtained complex by $[[K]]_{g}$; this is actually nothing
but $[[K]]$ with new grading imposed. \label{Kg}

\newcommand{\dd1}{\mbox{$\dot{1}$}}
\newcommand{\ddX}{\mbox{$\dot X$}}

{\bf Notation}. Further on, we shall mark all labels belonging to
non-orienting circles by a point, that is, we write $\dd1$ and
$\ddX$ for labels $1$ and $X$ on non-orienting circles.

Here we give an example how one smoothing with dots gets
reconstructed into another smoothing; we put dots over some circles
which correspond to ``non-orienting'' curves, see Fig. \ref{dotted}.

\begin{figure}
\centering\includegraphics[width=300pt]{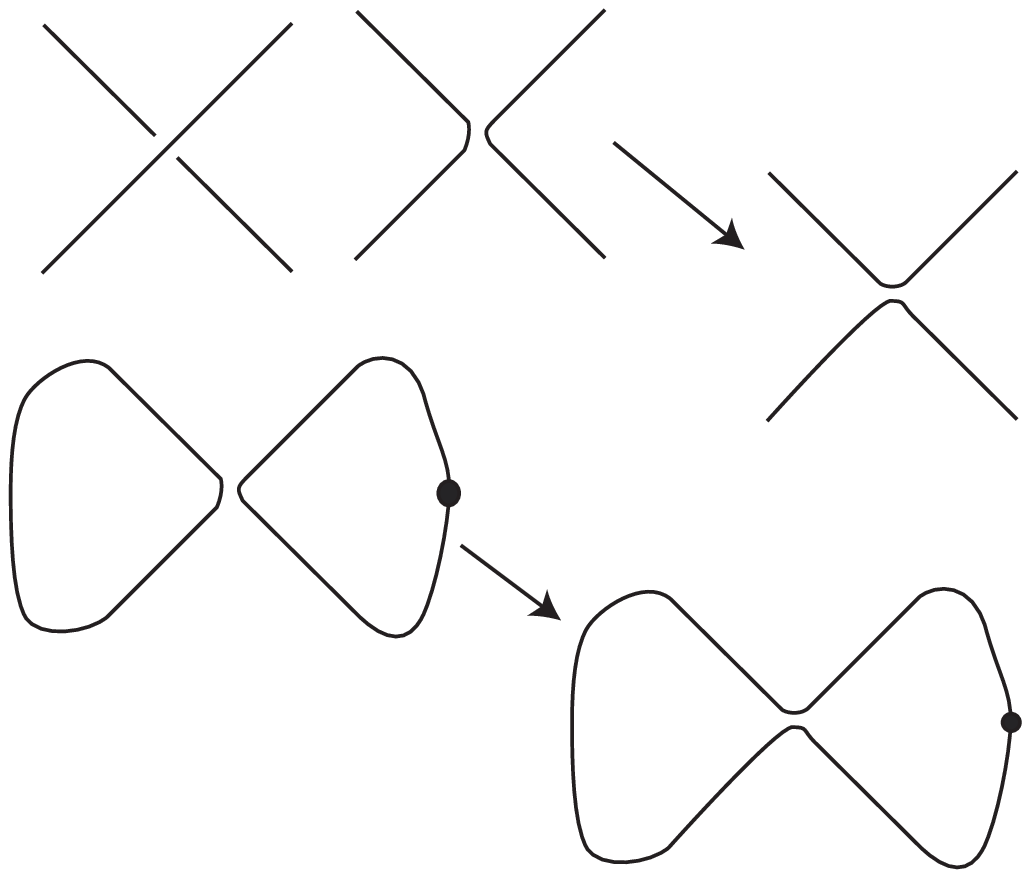} \label{dotted}
\end{figure}

Let us look how the differentials in $[[K]]$ behave with respect to
the new grading $g$. It is easy to see that

\begin{lm}
The differential $\partial$ can be uniquely represented as
$\partial'+\partial''$, where $\partial'$ preserves the new grading,
and $\partial''$ increases the new grading by $2$. \label{l1}
\end{lm}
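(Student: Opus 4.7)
The plan is to reduce to a local computation at each edge of the bifurcation cube. Each edge corresponds to one classical crossing and affects at most two circles via either a merge $m$ or a split $\Delta$; the remaining circles enter as an inert $\wedge\beta$ factor in the formula $\partial'(\alpha\wedge\beta) = m(\alpha)\wedge\beta$ or $\Delta(\alpha)\wedge\beta$, and contribute no $g$-shift. So it suffices to analyse $m$ and $\Delta$ on the locally involved circles.

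The key preliminary observation is that the orienting/non-orienting dichotomy records the $\mathbb{Z}_{2}$-homology class of a circle in $M$: orienting circles lie in $\ker w_{1}$, non-orienting ones have $w_{1}$-pairing $1$. Hence under any local bifurcation the $\mathbb{Z}_{2}$-classes add, which pins down the allowed configurations: the possible merges are $o+o\to o$, $o+n\to n$, and $n+n\to o$, and the possible splits are $o\to o+o$, $o\to n+n$, and $n\to o+n$. No other topological type can occur.

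Using the convention $g(\dot 1) = -1$, $g(\dot X) = +1$, and $g = 0$ on labels of orienting circles, I would enumerate the action of $m$ and $\Delta$ in each of these six bifurcation types. For example, in the merge $n+n\to o$, one has $m(\dot 1\wedge \dot 1) = 1$ (shift $+2$), $m(\dot 1 \wedge \dot X) = m(\dot X\wedge \dot 1) = X$ (shift $0$), and $m(\dot X\wedge \dot X) = 0$; in the split $n\to o+n$, $\Delta(\dot 1) = 1\wedge\dot X + X\wedge\dot 1$ decomposes as a $+2$-shift term plus a $0$-shift term, while $\Delta(\dot X) = X\wedge\dot X$ is a $0$-shift term. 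Going through the other four cases the same way confirms that every summand of every partial differential has $g$-shift equal to $0$ or $+2$. One then defines $\partial'$ as the sum of the shift-$0$ summands of all partial differentials and $\partial''$ as the sum of the shift-$+2$ summands, giving $\partial = \partial' + \partial''$.

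Uniqueness is automatic from the $\mathbb{Z}$-grading by $g$: any operator on a $g$-graded module admits a unique decomposition into components of fixed degree shift, so $\partial'$ and $\partial''$ are determined. The main obstacle is really just the bookkeeping across the six admissible bifurcation types; the degenerate $1\to 1$ case is harmless, since its partial differential is zero by convention and so contributes trivially to $\partial'$.
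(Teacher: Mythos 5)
Your proof is correct and takes essentially the same route as the paper: a case-by-case check of the local $m$- and $\Delta$-type partial differentials, using $g(\dot 1)=-1$, $g(\dot X)=+1$, $g=0$ on undotted labels, verifying that each summand has $g$-shift $0$ or $+2$. You make two details explicit that the paper leaves implicit (the $w_1$-additivity constraining the six admissible bifurcation types, and the automatic uniqueness of a fixed-degree decomposition), but the underlying argument is the same.
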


Indeed, one can check all $m$-type and $\Delta$-type differentials,
and see that $\dd1\wedge \dd1 \to 1$, $\dd1\wedge X\to \ddX$, $X\to
\ddX\wedge \ddX$ are all increasing the grading by $2$, whence the
differential $\dd1\to X\wedge \dd1+1\wedge \ddX$ splits into two
parts, where the first one preserves the new grading, and the second
one increases that by $2$.

From Lemma \ref{l1} we get

\begin{lm}
$[[K]]_{g}$ is a well defined triply graded complex with respect to
the differential $\partial'$.
\end{lm}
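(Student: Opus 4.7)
The plan is to reduce the claim to Lemma \ref{l1} together with the already-established fact that $\partial$ is a differential on $[[K]]$. The triple grading is automatic from the construction: the chain space $[[K]]$ carries the homological grading (height $\beta(s)$ of the state), the quantum grading (as defined on page \pageref{shifts}), and the newly introduced $g$-grading (the sum of $\pm 1$ contributions from non-orienting circles marked by $\ddX$ or $\dd1$). By construction these three gradings are independent, so $[[K]]_{g}$ decomposes as a direct sum indexed by triples $(i,j,g)$.

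To see that $\partial'$ is a differential, I would start from the identity $\partial^{2}=0$, which holds by the Theorem preceding Lemma \ref{l1}. Substitute the unique decomposition $\partial=\partial'+\partial''$ provided by Lemma \ref{l1}:
\begin{equation*}
0=\partial^{2}=(\partial')^{2}+(\partial'\partial''+\partial''\partial')+(\partial'')^{2}.
\end{equation*}
Now track the effect of each summand on the $g$-grading. Since $\partial'$ preserves $g$ and $\partial''$ raises $g$ by $2$, the operator $(\partial')^{2}$ preserves $g$, the cross term $\partial'\partial''+\partial''\partial'$ raises $g$ by $2$, and $(\partial'')^{2}$ raises $g$ by $4$. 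Because the three components land in distinct graded pieces of $\mathrm{End}([[K]]_{g})$, the vanishing of their sum forces each of them to vanish separately. In particular $(\partial')^{2}=0$, so $\partial'$ is a differential on $[[K]]_{g}$.

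It remains to confirm that $\partial'$ respects the other two gradings. This is automatic: $\partial$ itself raises the homological grading by $1$ and preserves the quantum grading, and $\partial'$ is obtained from $\partial$ by projecting onto the $g$-grading-preserving part (equivalently by retaining only those local contributions $\dd1\mapsto X\wedge \dd1$, $\dd1\wedge X\mapsto 1$, etc., that do not change $g$). Projection onto a subspace of a single grading cannot alter the bidegree in the other two variables, so $\partial'$ has tridegree $(1,0,0)$.

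The step I expect to require the most care is the bookkeeping in Lemma \ref{l1} — verifying that the decomposition $\partial=\partial'+\partial''$ is well defined and has the claimed grading shifts on every local bifurcation (including the $1\to 1$ cases, where $\partial$ is zero and so the decomposition is trivial). Once that is in hand, the present lemma is a purely formal consequence of the grading argument above.
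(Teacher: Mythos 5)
Your argument is the same as the paper's, just spelled out in more detail: the paper's one-line proof observes that $(\partial')^{2}$ is the projection of $\partial^{2}=0$ onto the grading-preserving subspace, which is exactly the content of your expansion into the three graded components $(\partial')^{2}$, $\partial'\partial''+\partial''\partial'$, and $(\partial'')^{2}$ of degrees $0$, $2$, $4$ in $g$. The proposal is correct.
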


\begin{proof}
Indeed, $({\partial'}^{2})$ is just the projection of
$\partial^{2}=0$ to the grading-preserving subspace.
\end{proof}

Luckily, it turns out that the homology of $[[K]]_{g}$ is invariant
after the same grading and degree shift (for old gradings) as in the
usual  case of classical knots (\cite{Kh}) or virtual knots with
oriented atoms \cite{Kho}. We shall show this more generally in the
next section.

\section{Additional grading: the general case}

The goal of our section is the following. Assume we have a space of
knots (braids, tangles, etc.) with a well-defined Kauffman bracket
and Khovanov homology. We wish to mark some circles in Kauffman's
states by dots (analogously to non-orienting cirlces in Bourgoin's
case) thus defining the new ``dotted gradings'': the dotted grading
for the state is defined as the number of all $\ddX$ minus the
number of all $\dd1$. Then we split the usual Khovanov differential
$\partial$ into two parts: the one $\partial'$ preserving the dotted
grading and the one $\partial''$ changing the dotted grading.

What are the properties this dotting should satisfy if we want the
grading to satisfy the following:

\begin{enumerate}

\item The complex $[[K]]_{g}$ is well defined;

\item Its homology (after some height and degree shift) is invariant
under isotopy (combinatorial equivalence, Reidemeister moves).

\end{enumerate}

The answer to the first question is easy: we just need that
$\partial''$ either always increase the new grading or always
decrease the new grading. Then it will guarantee $\partial'^{2}=0$.

But if we want the dots on circles to behave just as in the case of
Bourgoin so that the rules for multiplication and comultiplication
(with respect to the new grading) are:

$$m(1\wedge 1)=1;m(1\wedge X)=X;m(X\wedge 1)=X;m(X\wedge X)=0$$

$$m(\dd1\wedge 1)=\dd1;m(\dd1\wedge X)=0;m(\ddX\wedge 1)=\ddX;m(\ddX\wedge X)=0$$

$$m(1\wedge \dd1)=\dd1;m(1\wedge \ddX)=\ddX;m(X\wedge \dd1)=0;m(X\wedge \ddX)=0$$

$$m(\dd1\wedge \dd1)=0;m(\dd1\wedge \ddX)=X;m(\ddX\wedge \dd1)=X;m(\ddX\wedge \ddX)=0$$

and

$$\Delta(1)=1\wedge X+X\wedge 1$$

or

$$\Delta(1)=\dd1\wedge \ddX + dX\wedge 1$$

(depending on whether the output circles are dotted)

$$\Delta(X)=X\wedge X$$

or (when both output circles are dotted)

$$\Delta(X)=0.$$

$$\Delta(\dd1)=\dd1\wedge X$$

or

$$\Delta(\dd1)=\ddX\wedge \dd1$$

(depending on which of the two output circles is dotted)

$$\Delta(\ddX)=X\wedge \ddX$$

or

$$\Delta(X)=\ddX\wedge X$$

(depending on which of the two output circles is dotted).

The operators $m$ and $\Delta$ above are just as before (in the
categorification of Bourgoin's invariant), however, with the reasons
for putting dots completely forgotten.

Nevertheless, to have precisely this dotting, we need that {\em the
dotting of circles is additive modulo ${\bf Z}_{2}$}, that is, if we
have a $2\to 1$ bifurcation, then the number of dots for the two
circles is congruent modulo $2$ to the number of dots for the one
circle (analogously for $1\to 2$-bifurcations). We also require that
this dotting is preserved under $1\to 1$-bifurcations, that is, if a
surgery transforms one circle to one circle then this circle should
necessarily be unorienting both before and after the surgery.

The conditions above is enough for the complex $[[K]]_{g}$ to be
well defined.

Now, in order to have the invariance under the Rei\-de\-mei\-ster
moves, we have to restore the proof picture of Khovanov (or of
\cite{Izv}).

The invariance under the first Reidemeister move is based on the
following two which should held when adding a small curl:

\begin{enumerate}

\item the mapping $\Delta$ is injective

\item the mapping $m$ is surjective.

\end{enumerate}

In fact, the last two conditions hold when the small circle is not
dotted.

Indeed, consider the complex

\begin{equation} [[\skkinkr]]=\left([[\skroh]]\stackrel{m}{\to} [[\skrov]] \{1\}\right).\end{equation}

The usual argument goes as follows: the complex in the right hand
side contains a $\Delta$-type partial differential, which is
injective. Thus, the complex $[\skrov]]$ is killed, and what remains
from $[\skroh]]$ is precisely (after a suitable normalisation) the
homology of $[[\skkinkl]]$.

But $\Delta$ is injective because for any $l\in {1,X}$ we have
$\Delta(l)=l\wedge X+\langle some mess\rangle$, where the second
term $X$ in $l\wedge X$ corresponds to the small circle.

But in our situation with dotted circles, this happens only if {\em
the small circle is not dotted}. But if the small circle is dotted,
it would lead, say, to $\Delta:X\to 0$, because $\ddX\wedge \ddX$
has another dotted grading (greater by $2$ than the grading of $X$).
But when the small circle is dotted, the proof is the same.

An analogous situation happens with
\begin{equation} [[\skkinkl]]=\left([[\skrov]]\stackrel{m}{\to} [[\skroh]] \{1\}\right).\end{equation}

Here we need that  the mapping $m$ be surjective; actually, it would
suffice that the multiplication by $1$ on the small circle is the
identity. But this happens if and only if the small circle is not
dotted, that is, we have $1$, not $\dd1$.

Quite similar things happen for the second and for the third
Reidemeister moves. The necessary conditions can be summarised as
follows:


{\em The small circles which appear for the second and the third
Reidemeister move should not be dotted}.

The explanation comes a bit later.  Now, we see that this condition
is obviously satisfied when the dotting comes from a cohomology
class, and not necessarily the Stiefel-Whitney cohomology class for
non-orientable surface. Any homology class should do.

Thus (modulo some explanations given below) we have proved the
following

\begin{thm}
Let ${\cal M}\to M$ be a fibration with $I$-fibre so that ${\cal M}$
is orientable and $M$ is a $2$-surface. Let $h$ be a
$Z_{2}$-cohomology class and let $g$ be the corresponding dotting.
Consider the corresponding grading on $[[K]]$. Then for a link
$K\subset {\cal M}$ the homology of $[[K]]_{g}$ is invariant under
isotopy of $K$ in $M$ (with both the orientation of $M$ and the
$I$-bundle structure fixed) up to some shifts of the usual (quantum)
grading and height (homological grading).
\end{thm}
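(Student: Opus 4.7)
The plan is to split the theorem into two verifications. First, given the cohomology class $h$, I produce an explicit circle-by-circle dotting rule and check that it satisfies the two combinatorial hypotheses isolated just before the statement: ${\bf Z}_2$-additivity at every $2\to 1$ or $1\to 2$ saddle, and preservation at every $1\to 1$ saddle. By Lemma \ref{l1} together with these two properties, $[[K]]_g$ is a well-defined complex with respect to $\partial'$. Second, I check that the small auxiliary circles appearing in the local models of the three Reidemeister moves are automatically undotted, which is the precise condition, isolated in the paper, for the Khovanov-style cancellation arguments of \cite{Kh} and \cite{Izv} to apply to $\partial'$ in place of $\partial$.

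For the dotting itself I would pick a cellular $1$-cycle $\gamma\subset M$ representing the Poincar\'e dual of $h$, generic with respect to the underlying four-valent graph of $K$ and transverse to every state circle, and declare a state circle $C$ to be dotted exactly when $|\gamma\cap C|$ is odd. The $2\to 1$ and $1\to 2$ additivity then comes from the fact that the symmetric difference of the incoming and outgoing circles at a saddle is null-homologous, being the boundary of the small disk swept out by the saddle band, so it meets $\gamma$ in an even number of points. The $1\to 1$ case is the same observation applied to the unique circle before and after the saddle. Independence of the choice of representative $\gamma$ is automatic, since two representatives differ by a boundary, which has even intersection with every closed curve.

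For the Reidemeister invariance I would simply rerun the arguments of \cite{Izv} and \cite{Kh} local move by local move. The key algebraic facts they need are the injectivity of $\Delta$ and the surjectivity of $m$ attached to the small circles that appear or disappear in each move; as observed in the paragraphs preceding the theorem, both properties hold provided the small circles are undotted. But every such small circle bounds an embedded disk in $M$ (the kink disk in R1, the bigon disk in R2, the two triangular disks in R3), so it is null-homologous and its intersection number with $\gamma$ vanishes mod $2$. Hence none of these auxiliary circles carry a dot, and the cancellation arguments apply verbatim with the same height and quantum grading shifts as in the undotted case.

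The step I would spend the most care on is the $1\to 1$ case in the well-definedness argument: there the partial differential attached to such a saddle has been declared zero unless the dotting is preserved, so any failure of preservation across that saddle would not be fixable by any algebraic sign twist or normalisation. It is precisely here that the topological origin of $g$, rather than an arbitrary ${\bf Z}_2$-valued function on circles, is essential, and this is the reason one cannot prove the same statement for an arbitrary combinatorial dotting rule without separately imposing the sort of axioms promised in Section 5.
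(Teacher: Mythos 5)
Your argument is correct and follows essentially the same route as the paper: reduce to the two dotting axioms (${\bf Z}_2$-additivity at $2\to 1$/$1\to 2$ saddles, preservation at $1\to 1$ saddles, and undottedness of the small circles in the local Reidemeister pictures), observe that a cohomology-class dotting satisfies them automatically because every saddle-move and every Reidemeister move takes place inside an embedded disk in $M$ where all the relevant symmetric differences are null-homologous, and then invoke the standard Khovanov/\cite{Izv} cancellation arguments ($\Delta$ injective, $m$ surjective on the undotted small circle). The only cosmetic difference is that you make the topological input explicit through a Poincar\'e-dual $1$-cycle $\gamma$ and intersection counts, whereas the paper works directly with the evaluation $\langle h,[C]\rangle$ and states the disk argument informally; the content is the same.
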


\subsection{Explanation for the second and the third moves}

We have the following picture for the Reidemeister move for
$[[\skrtwhh]]$:

\newcommand{\lb}{\mbox{[[}}
\newcommand{\rb}{\mbox{]]}}

\begin{equation}\begin{array}{ccc}
\lb\skrtwhh\rb\{1\} & \stackrel{m}{\longrightarrow} & \lb\skrtwvh\rb\{2\}\\
\Delta\uparrow &  & \uparrow \\
\lb\skrtwhv\rb & \longrightarrow & \lb\skrtwvv\rb\{1\}
\end{array}.
\end{equation}

Here we use the notation ${\{\cdot \}}$ for the degree shifts, see
page \ref{shifts}.

\begin{equation}\begin{array}{ccc}
\lb\skrtwhh\rb\{1\} & \stackrel{m}{\longrightarrow} & \lb\skrtwvh\rb\{2\}\\
\Delta\uparrow &  &\uparrow \\
\lb\skrtwhv\rb & \longrightarrow & \lb\skrtwvv\rb\{1\}
\end{array}.
\end{equation}
This complex contains
the subcomlex ${\cal C}'$:

\begin{equation}{\large {\cal C}'}=\begin{array}{ccc}
\lb\skrtwhh\rb_{1}\{1\} & \stackrel{m}{\longrightarrow} &\lb\skrtwvh\rb\{2\}\\
 \uparrow & & \uparrow \\
0 & \longrightarrow & 0
\end{array}\end{equation}
if {\em the small circle is not dotted}.

Here and further $1$ denotes the mark on the small circle.

Then the acyclicity of ${\cal C}'$ is evident.

Factoring ${\cal C}$ by ${\cal C'}$, we get:

\begin{equation}\begin{array}{ccc} \lb\skrtwhh\rb\{1\}\slash_{1=0} &
\longrightarrow&  0\\
\Delta\uparrow & & \uparrow \\
\lb\skrtwhv\rb & \longrightarrow &
\lb\skrtwvv\rb\{1\}\end{array}.\end{equation}

In the last complex, the mapping $\Delta$ directed upwards, is an
isomorphism (when our small circle is not dotted). Thus the initial
complex has the same homology group as $[[\skrtwvv]]$. This proves
the invariance under $\Omega_{2}$.

The argument for $\Omega_{3}$ is standard as well; it relies on the
invariance under $\Omega_{2}$ and thus we should also require that
the small circle is not dotted.

\section{More gradings; more examples}

We have listed the necessary conditions for the dotting to give such
a grading that $[[K]]_{g}$ is invariant (up to some shifts); the
conditions are quite natural: additivity of dots modulo ${\bf
Z}_{2}$ and triviality of small circles for all types of
Reidemeister moves. We have actually missed one condition we assumed
without saying. Namely, {\em in the pictures corresponding to the
Reidemeister moves, the similar arcs are dotted similarly}.

This means, for example, that for the second Reidemeister move the
smoothing $\skrtwvv$ gives two branches which should have the same
dotting as the two branches of $\skrtwvv$. The same follows for all
the three moves.

Thus, we introduce the {\em dotting axiomatics}. Namely, assume we
have some class of objects with Reidemeister moves, Kauffman bracket
and the Khovanov homology (in the usual setup or in the setup of
\cite{Izv}). Assume its circles can be dotted in such a way that the
following conditions hold:

\begin{enumerate}

\item The dotting of circles is additive with respect to $2\to
1$-bifurcations, and it is preserved under $1\to 1$-bifurcations.

\item Similar curves for similar smoothings of the RHS and the LHS
of any Reidemeister move have the same dotting

and

\item Small circles appearing for the first, the second, and the
third Reidemeister moves are not dotted.

\end{enumerate}

Let us call the conditions above {\em the dotting conditions}.

\begin{thm}
Assume there is a theory with Khovanov complex $([[K]],\partial)$
such that the Kauffman states can be dotted so that the dotting
conditions hold. Define $[[K]]_{g}$ as before (see page
\pageref{Kg}).

1) Then the homology of $[[K]]_{g}$ is invariant (up to a degree
shift and a height shift).

2) For any operator $\lambda$ on the ground ring, the complex
$[[K]]_{g}$ is well defined with respect to the differential
$\partial'+\lambda\partial''$, and the corresponding homology is
invariant (up to well-known shifts).

3) Moreover, if we have several dottings $g_{1}, g_{2}, \dots,
g_{k}$ so that for each of them the dotting condition holds, then
the complex $K_{g_{1},\dots,g_{k}}$ with differential
$\partial_{g_{1},\dots, g_{k}}$ defined to be the projection of
$\partial$ to the subspace preserving all the gradings, is
invariant.

\label{mainthm}
\end{thm}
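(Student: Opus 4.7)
The plan is to mirror the argument already carried out in Section 4 for Bourgoin's twisted knots (Lemma \ref{l1} together with the cancellation proof at the end of the section), but to run it in the abstract setting where the three dotting axioms replace the specific ``orientability of circles'' rule.

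For part (1), I would first establish the algebraic splitting. The local formulas for $m$ and $\Delta$ listed at the start of Section 4, together with the dotting conditions, imply that on each basic piece ($m(\dd1\wedge\dd1)\mapsto 1$, $\Delta(\dd1)\mapsto \ddX\wedge\dd1+\dd1\wedge X$, etc.) the differential either preserves the $g$-grading or strictly raises it by the same fixed amount $d$ ($d=2$ in all the explicit cases). Consequently $\partial=\partial'+\partial''$ with $\partial'$ preserving and $\partial''$ raising $g$ by $d$, and $\partial^{2}=0$ decomposes by $g$-degree into
\begin{equation}
(\partial')^{2}=0,\qquad \partial'\partial''+\partial''\partial'=0,\qquad (\partial'')^{2}=0.
\end{equation}
For Reidemeister invariance I would then replay, move by move, the Khovanov-type cancellation argument given after the statement of the theorem. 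The three dotting conditions are precisely what is needed for this to descend from $\partial$ to $\partial'$: condition (1) makes the partial differentials split coherently under $g$, condition (2) makes the canonical identifications between the two sides of each Reidemeister move isomorphisms of $g$-graded complexes, and condition (3) guarantees that the small acyclic subcomplexes killed in the argument (for instance the $\Delta$-image of the $X$-label on the curl used for $\Omega_{1}$, and the subcomplex $\mathcal{C}'$ displayed for $\Omega_{2}$) are still subcomplexes with respect to $\partial'$, and that the residual maps $m$ and $\Delta$ on the small circle remain surjective and injective respectively.

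For part (2), the three identities above give $(\partial'+\lambda\partial'')^{2}=(\partial')^{2}+\lambda(\partial'\partial''+\partial''\partial')+\lambda^{2}(\partial'')^{2}=0$ for any $\lambda$ commuting with the differential, so the complex is well defined. Invariance then follows from invariance of $\partial=\partial'+\partial''$ by the rescaling trick: when $\lambda$ is invertible, the automorphism multiplying each generator of $g$-degree $g_{0}$ by $\lambda^{g_{0}/d}$ conjugates $\partial$ into $\partial'+\lambda\partial''$, yielding an isomorphism of complexes; when $\lambda$ is a non-invertible operator I would instead note that the chain homotopies used in (1) are themselves $g$-graded, so a standard perturbation-of-differential argument carries them over to $\partial'+\lambda\partial''$. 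For part (3), with dottings $g_{1},\dots,g_{k}$ one gets a finer decomposition $\partial=\sum_{S\subset\{1,\dots,k\}}\partial_{S}$, where $\partial_{S}$ strictly raises $g_{i}$ for $i\in S$ and preserves the remaining $g_{j}$; the multi-grading-preserving component of $\partial^{2}=0$ then forces $\partial_{\emptyset}^{2}=0$, and Reidemeister invariance is obtained by running the cancellation argument of (1) inside the intersection of all $g_{i}$-preserving subspaces, which is legitimate since each dotting condition holds for each $g_{i}$ independently.

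I expect the main obstacle to be the verification that the cancellation of the small acyclic subcomplex $\mathcal{C}'$ in the $\Omega_{2}$ argument, and its analogue for $\Omega_{3}$, really descends to the $g$-preserving subcomplex. Concretely, the isomorphism identifying the quotient with the complex on the other side of the move must have no components that change $g$; this is bought exactly by condition (3), but it must be checked against each of the four corners of the square diagram displayed in Section 4, and then repeated in all the configurations arising for $\Omega_{3}$. Once this case analysis is in hand, parts (2) and (3) are essentially formal consequences.
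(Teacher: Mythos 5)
Your proposal is correct in outline and follows the same overall strategy as the paper's proof for parts (1) and (3): you establish the splitting $\partial=\partial'+\partial''$, replay the Reidemeister-move cancellation from Section 4 under the three dotting conditions (with condition (3) ensuring the small-circle maps $m$, $\Delta$ are $g$-preserving and remain surjective, resp.\ injective), and for part (3) decompose $\partial$ further by the multi-grading. Where you genuinely diverge is in the invariance argument of part (2). The paper simply reruns the same Reidemeister cancellation with $\partial'+\lambda\partial''$, the key observation being that the small-circle maps live entirely in $\partial'$ and are therefore unaffected by $\lambda$, so the acyclic subcomplexes cancel exactly as before. You instead offer a conjugation argument, rescaling a generator of $g$-degree $g_0$ by $\lambda^{g_0/d}$ to identify $(\mathcal{C},\partial)$ with $(\mathcal{C},\partial'+\lambda\partial'')$ for invertible $\lambda$, and a perturbation-of-chain-homotopies argument for non-invertible $\lambda$. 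The rescaling observation is a nice explicit isomorphism the paper does not record, but as stated it requires $\lambda^{g_0/d}$ to make sense when $g_0$ is odd, so one must either adjoin $\lambda^{1/2}$ or split the complex by $g$-parity first. More importantly, the interesting case for the applications is non-invertible $\lambda$ (e.g.\ $\lambda=2$ in ${\bf Z}_4$), and there your perturbation sketch is less direct than the paper's: rather than invoking a general deformation principle for homotopies, it suffices to note, as the paper does, that the Gaussian-elimination steps only ever cancel along the $g$-preserving small-circle maps, which are literally the same in $\partial$, $\partial'$, and $\partial'+\lambda\partial''$, so the proof carries over word for word.
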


\begin{proof}
The first part of the theorem follows from the reasonings above.

Now, for the differential
$\tilde\partial=\partial'+\lambda\partial''$ we have
$(\tilde\partial)^{2}={\partial'}^{2}+\lambda(\partial'\partial''+\partial''\partial')+\lambda^{2}{\partial''}^{2}$;
the expression in the right hand side gives the projections of
$(\partial)^{2}=(\partial'+\partial'')^{2}$ to three subspaces of
corresponding gradings taken with some coefficients (here
$1,\lambda,\lambda^{2}$). Since $(\partial)^{2}=0$, all projections
are zeroes. The invariance of the homology is proved as above. The
main thing is that the mapping $m$ is surjective and $\Delta$ is
injective.

The proof of the last statement is analogous to the proof with only
one grading. Again, it is enough to mention that $m$ remains
surjective and $\Delta$ remains injective.

\end{proof}

\subsection{Examples}

One example (already published in the note \cite{Doklady}) deals
with the following situation. Consider a fixed thickened surface
${\cal M}$ which is the total space of an $I$-fibre bundle over some
$2$-manifold $M$, not necessarily orientable. We assume the
orientation of ${\cal M}$ and the $I$-bundle structure fixed.

Consider all ${\bf Z}_{2}$-cohomology classes $H^{1}({\cal M})$
(there are finitely many of them). For knots in ${\cal M}$, each of
these classes generates a dotting for circles (see page \ref{Kg}) in
the Kauffman states, thus, it defines gradings for $[[K]]$. Call
these gradings {\em additional} (with respect to the two usual
Khovanov gradings). Denote the obtained complex by $[[K]]_{gg}$ and
the projection of the differential $\partial$ by $\partial_{gg}$.

\begin{thm}
The homology of $[[K]]_{gg}$ with respect to $\partial_{gg}$ is an
invariant of $K$.
\end{thm}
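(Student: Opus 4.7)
The plan is to reduce this statement directly to part (3) of Theorem \ref{mainthm}. Since $H^{1}({\cal M};{\bf Z}_{2})$ is a finite ${\bf Z}_{2}$-vector space, it suffices to exhibit a dotting $g_{h}$ associated with each class $h\in H^{1}({\cal M};{\bf Z}_{2})$ and check the three dotting conditions for every such $g_{h}$ individually; then Theorem \ref{mainthm}(3) immediately yields invariance of $[[K]]_{gg}$.

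The definition of $g_{h}$ is as follows. A Kauffman state circle $\gamma$ sits on the base surface $M$ (equivalently, inside ${\cal M}$ via any section), so it represents a class $[\gamma]\in H_{1}({\cal M};{\bf Z}_{2})$. Put a dot on $\gamma$ precisely when the pairing $\langle h,[\gamma]\rangle\in{\bf Z}_{2}$ is nontrivial, and leave it undotted otherwise. Since $\langle h,\cdot\rangle$ is ${\bf Z}_{2}$-linear in homology, the entire verification is homological.

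The three dotting conditions are now verified as follows. For condition~1 (additivity under $2\to 1$ bifurcations and preservation under $1\to 1$ bifurcations), note that at every crossing the change of smoothing is a band surgery inside a small disk in ${\cal M}$; consequently, for a $2\to 1$ surgery the three circles $\gamma_{1},\gamma_{2},\gamma$ cobound a pair of pants in ${\cal M}$, so $[\gamma]=[\gamma_{1}]+[\gamma_{2}]$ in $H_{1}({\cal M};{\bf Z}_{2})$; pairing with $h$ gives precisely the required mod-$2$ additivity. A $1\to 1$ surgery produces cobordant (in fact, homologous) circles, so the dot is preserved. Condition~2 is automatic: outside the small disk where a Reidemeister move is performed, each surviving arc of a state circle is literally unchanged, and reconnecting the same endpoints inside the disk by arcs which bound a disk in ${\cal M}$ yields a homologous (mod $2$) circle. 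Condition~3 holds because the small circle appearing in any of the three Reidemeister moves is contained in the Reidemeister disk itself; as the boundary of an embedded disk in ${\cal M}$, it is null-homologous mod~$2$, hence pairs trivially with every $h$ and is therefore undotted.

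The only slightly delicate point, and the one I expect to absorb most of the technical care, is condition~1 for a $2\to 1$ bifurcation: one must see that the band attached at the crossing indeed exhibits the three circles as the boundary of a pair of pants embedded in ${\cal M}$, rather than immersed or disconnected in some degenerate way. Once the local model at the crossing is drawn (as in Fig.~\ref{figj}), this is standard. With all three conditions verified for each $g_{h}$, Theorem \ref{mainthm}(3) applied to the finite family $\{g_{h}\}_{h\in H^{1}({\cal M};{\bf Z}_{2})}$ delivers the invariance of the homology of $[[K]]_{gg}$ with respect to $\partial_{gg}$ (up to the customary height and quantum shifts).
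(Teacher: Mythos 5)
Your proposal is correct and follows the paper's (largely implicit) argument: the theorem is presented as a direct application of Theorem~\ref{mainthm}(3) to the finite family of dottings arising from $H^{1}({\cal M};{\bf Z}_{2})$, and the paper has already noted in section 4 that cohomological dottings satisfy the dotting conditions. The only thing you add beyond the paper is the explicit verification of the three conditions via the band/pair-of-pants homology argument, which is a welcome filling-in of detail the paper treats as obvious.
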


Consider the category $T$ of (classical or virtual) tangles with
$2k$ open ends. Then the construction above allows to make the
following dotting on the Kauffman homology.

Fix some number $l$ and mark some of the tangle ends by some of $l$
colours $1,2\dots,l$.

Couple the endpoints of the tangle in an arbitrary way (so that any
tangle closes into a classical or virtual knot).

Having done this, for any tangle $t\in T$, we can consider its
closure $Cl(t)$. It acquires a dotting from $l$ colours, thus we get
$l$ additional gradings for the Khovanov complex; denote the
obtained complex by $[[Cl(t)]]_{dd}$, and denote the corresponding
differential by $\partial_{dd}$.

From the above, we see that

\begin{thm}
For any fixed endpoint coupling, the homology of $[[Cl(t)]]_{dd}$ is
an invariant of $t$.
\end{thm}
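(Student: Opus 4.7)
The strategy is to exhibit the colour dottings as dottings in the sense of Theorem \ref{mainthm} and then invoke part (3) of that theorem directly. Throughout, a Reidemeister move performed on the tangle $t$ is a local move inside the tangle region, hence corresponds to a Reidemeister move on $Cl(t)$ supported in a disc disjoint from the closing arcs and from the $2k$ coloured endpoints.

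First I would spell out the dotting precisely. For each colour $i\in\{1,\dots,l\}$, and for each Kauffman state $s$ of $Cl(t)$, every circle of $s$ passes through a well-defined (possibly empty) subset of the $2k$ endpoints, because after smoothing every endpoint lies on exactly one closed circle. Declare a circle to be $i$-dotted iff the number of endpoints of colour $i$ on it is odd. This gives $l$ independent dottings $g_1,\dots,g_l$ on every Kauffman state of $Cl(t)$, and hence $l$ gradings on $[[Cl(t)]]$.

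Next I would check the three dotting conditions for each $g_i$ separately. Additivity under a $2\to1$ bifurcation is immediate: when two state circles merge, the set of endpoints on the resulting circle is the disjoint union of the two endpoint sets, so mod $2$ the count of colour-$i$ endpoints adds. Symmetrically for $1\to2$ bifurcations. For a $1\to1$ bifurcation, the surgery is local and does not create or destroy endpoints on the circle, so the parity is preserved. The second axiom, that matching arcs on the two sides of a Reidemeister move carry the same dotting, is an immediate consequence of the fact that outside the local disc of the move the arcs agree, and all coloured endpoints live outside that disc; so corresponding circles on the two sides meet precisely the same coloured endpoints. Finally, any ``small circle'' produced by a Reidemeister-I, II, or III configuration is entirely contained in the local disc and therefore passes through no endpoint at all, so its $i$-dotting is $0$ for every $i$, as required.

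With all three dotting conditions verified for each $g_i$, part (3) of Theorem \ref{mainthm} applies and shows that the homology of $[[Cl(t)]]_{dd}$ with respect to $\partial_{dd}$ is invariant under the generalised Reidemeister moves performed inside $t$, up to the standard quantum and height shifts. The main conceptual point, and the only thing that needs any checking, is the locality argument in the second and third axioms; everything else is bookkeeping. I do not expect a genuine obstacle here, because once the endpoints are fixed and the coupling is frozen, the colour data behaves like a fixed $\mathbb{Z}_2$-labelling of the boundary, and the Reidemeister moves on $t$ never interact with that labelling.
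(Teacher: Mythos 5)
Your proof is correct and follows exactly the route the paper intends: the paper simply writes ``From the above, we see that'' and leaves the verification of the dotting axioms to the reader, while you make the intended dotting explicit (a circle is $i$-dotted iff it passes through an odd number of colour-$i$ endpoints) and check all three dotting conditions for each colour before invoking part (3) of Theorem \ref{mainthm}. The locality argument you give --- Reidemeister moves on $t$ occur in a disc disjoint from the $2k$ endpoints and from the closing arcs, so small circles carry no endpoints and corresponding arcs on either side carry identical endpoint sets --- is precisely the missing verification, and the additivity mod $2$ under merge/split and preservation under $1\to1$ surgeries are as you state.
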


A particular case of this refers to long classical (and virtual)
knots.

Namely, if we deal with {\em long} virtual knots, this grading will
lead to a new invariants. Note that {\em long} virtual knots do not
coincide with {\em compact} virtual knots, see e.g., \cite{Malng}.
There are non-trivial long virtual knots having only trivial
classical closures. Say, it is easy to construct two classical
$2-2$-tangle with the same classical closures and different virtual
closures.

As for classical knots, thinking of them from the ``long'' point of
view seems to be very prospective. In our case, if we take long
classical knots and put one dot on one end, thus defining a new
grading. This will split the usual Khovanov differential $\partial$
into $\partial'+\partial''$. The only circle which can support the
new grading is the one obtained by closing the only long arc. It
exists in every state, and it can be marked either by $\ddX$ or by
$\dd1$. If we just take $\partial'$, then it would split the initial
Khovanov complex into two parts: the one with ${\ddX}$ and the one
with ${\dd1}$ with no differential acting from one part to
another.

This is nothing but the usual {\em reduced Khovanov homology}.

However, if we take not just $\partial'$, but
$\partial'+\lambda\partial''$ for some ring $R$ where $\lambda$ is a
zero divisor (say, 2 in the ring ${\bf Z}_4$).

This defines new invariants of ordinary knots (or links with one
marked component).

However, it seems to be much more interesting when we pass from
usual long knots to cables. Namely, having a long classical knot
(assume it to be {\em framed}), we can take its $n$-cabling. Then
for any dotting and for any closure the new homology groups will be
invariants of the initial (long) classical knot.

One more example refers to {\em rigid virtual knots}. We consider
virtual knot diagrams up to all Reidemeister moves and all detours
preserving the Whitney index of the curve. Namely, we prohibit the
following {\em first virtual Reidemeister moves}:
$\skkinkv\to\skcurl$. Rigid virtual knots are of interest because
all quantum invariants of classical knots (which can not be
generalised for generic virtual knots) can be generalised in full
totality for rigid virtual knots.


For such knots, since the first virtual Reidemeister move is
forbidden, in any Kauffman state for any circle the number of
self-intersections modulo $2$ for such circles is invariant. It
defines well a dotting, thus giving one new grading for rigid
virtual knots (hence, for zero-homologous virtual knots as well).

\subsection{Braids}

It is a very intriguing question to get new gradings for classical
knots (without going to long knots).

We are not going to consider braids just as a partial case of
tangles and put various dots on the ends of the braid. We think of a
braid as a source of constructing knot invariants via Markov moves.

Thus, a {\em closed braid} can be viewed of as a special kind of
link in a thickened annulus $S^{1}\times I\times I$. This annulus
has non-trivial cohomology group $H^{1}(S^{1}\times I\times I,{\bf
Z}_{2})={\bf Z}_{2}$. From this we get an additional grading, thus
having a complex $[[Cl(B)]]_{g}$ with  differential $\partial'$;
here $Cl(B)$ is the closure of a braid $B$. It is obvious that the
homology of this complex is well defined not only under braid
isotopies, but also under braid conjugations, since they preserve
the closure.

Thus, in order to get a knot invariant, we have to overcome the
second Reidemeister move (adding a new loop). Unfortunately, if $K'$
is obtained from $K$ by a second Markov move then the homology of
$Cl(K')$ should not coincide with the homology of $Cl(K)$. The
reason is that the move we perform is the first Reidemeister move,
and the small circle that appears is dotted.

However, this allows to extract the difficulty for proving the
invariance of the the new dotted (grading) homology for knots in its
{\em pure form}: the only obstacle we get is the first
Rei\-de\-mei\-ster move.

Hopefully, the homology of this space with extra gradings behaves in
a predictable manner under the Markov move, maybe, after some
stabilisations.

We shall return to this question while speaking about filtrations
and spectral sequences.

\subsection{Further gradings}

The construction above takes into account only ${\bf
Z}_{2}$-homology classes (unlinke the construction of \cite{APS})
where the {\em homotopy} information of Kauffman state circles was
taken into account to construct a grading.

More homology information can be taken into account in the following
manner.

Assume we have only one non-trivial cohomology class (say, we live
on the thickened annulus or deal with long knots with one dot on one
end).

Then such an object has $H^{1}={\bf Z}$. In what follows, we were
using only the ${\bf Z}_{2}$ information for constructing our
differentials.

We shall now use the ${\bf Z}$-cohomology information to introduce
the {\em secondary gradings} as follows.

If the usual grading coming from the ${\bf Z}_{2}$-homology class is
non-trivial, then we decree the secondary grading to be zero. If the
first grading is trivial, then we look at the value of the
cohomology group not over ${\bf Z}_{2}$, but over ${\bf Z}_{4}$ and
then we set the secondary grading to be $0$ if the cohomology class
is trivial modulo ${\bf Z}_{4}$ and $1$ if it is equal to $2$ modulo
${\bf Z}_{4}$. Analogously, in the case when the primary and the
secondary gradings are both zero, we define the ternary grading to
be $1$ or $0$ depending on the value of the ${\bf Z}_{8}$-cohomology
(of course, if one of them was not zero, we set all further gradings
to be zero).


This defines a family of further gradings on circles which answers
the question what is the maximal power of $2$, the corresponding
value of the co\-ho\-mo\-lo\-gy is equal to. For instance, such
gradings can be all zeroes (say, if the circle is trivial) or
$(1,0,0,\dots)$ or $(0,1,0,0,\dots)$ or $(0,0,1,0,0,\dots)$, etc.

These gradings define corresponding dottings and gradings for all
elements $1$ and $X$ (as before, we count the gradings for $X$ with
plus, and the gradings for $1$ with minus).

This defines a multigrading on the complex (chain set) $[[K]]$.
Denote the obtained chain set by $[[K]]_{mg}$. The usual
differential $\partial$ for $[[K]]$ splits into two parts: the one
$\partial'$ preserving the new multigrading and the one $\partial''$
not preserving the grading.

\begin{lm}
For any of the new gradings, the differential $\partial''$ either
preserves it or increases it by $2$.
\end{lm}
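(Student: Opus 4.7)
I would prove this by a direct local case analysis at each partial differential of $\partial$, paralleling the proof of Lemma \ref{l1} for the single $g$-grading. The plan is: fix one grading $g_j$ and one partial differential (corresponding to a single Frobenius operation $m$ or $\Delta$ at a classical crossing). For each of the four $m$-type basis label patterns $m(1\wedge 1)=1$, $m(1\wedge X)=X$, $m(X\wedge 1)=X$, $m(X\wedge X)=0$ (and the four $\Delta$-type patterns) compute the change in $g_j$ as a small linear combination of $g_j(\gamma_1), g_j(\gamma_2), g_j(\gamma)$, weighted by the sign convention (contribution $+g_j$ from an $X$-labelled circle, $-g_j$ from a $1$-labelled circle).

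The key input is the additivity of the underlying cohomology class under surgery: for a merge $\gamma_1,\gamma_2\to\gamma$ one has $c(\gamma)\equiv c(\gamma_1)+c(\gamma_2)\pmod{2^N}$ for every $N$, and dually for splittings. Combined with the hierarchical definition of the multigrading -- namely $g_j(\gamma)=1$ exactly when the $2$-adic valuation $\nu_2(c(\gamma))$ equals $j-1$, and $g_j(\gamma)=0$ whenever some lower $g_{j'}$ is already nonzero -- this sharply restricts which triples $(g_j(\gamma_1), g_j(\gamma_2), g_j(\gamma))$ can actually arise in a given bifurcation. In particular $(1,1,1)$ is impossible, and when $(1,1,0)$ occurs all lower gradings of the three involved circles vanish identically. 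Configurations in which a change in $g_j$ is only forced by a simultaneous jump of some lower $g_{j'}$ get absorbed into the $g_{j'}$-component of $\partial''$ and do not contribute to the $g_j$-analysis.

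A direct enumeration of the remaining cases -- essentially the same calculation as in Lemma \ref{l1}, carried out independently for each $g_j$ -- then shows that every resulting change in $g_j$ is $0$ or $+2$. The main obstacle, and where most of the work sits, is the sign: one must verify that no surviving contribution is $-2$. This is handled exactly as in the single-grading case: the bias between the $+$-for-$X$ and $-$-for-$1$ conventions, together with the fixed direction of each partial differential (from the $A$-smoothed cube vertex to the $B$-smoothed one), forces any nonzero change under $\partial''$ to be a strict $+2$. Since the argument runs independently for each grading, the conclusion holds for any of the new gradings $g_j$, which is the statement of the lemma.
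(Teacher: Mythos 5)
Your proposal matches the paper's proof in approach and substance: the paper also reduces everything to additivity of the cohomology class under surgery and then splits the bifurcation into the case where each $g_j$ is mod-$2$-additive (which it reduces to the situation of several independent gradings coming from distinct cohomology classes, i.e.\ to the same enumeration as Lemma~\ref{l1}) and the case where the mod-$2$ parity of some $g_j$ is violated (which the paper dismisses by noting that $\partial'$ vanishes on that component — the same move as your ``absorbed into the lower-$g_{j'}$ component of $\partial''$'' step). The extra observations in your write-up, the impossibility of $(1,1,1)$ and the fact that $(1,1,0)$ forces the lower gradings of all three circles to vanish, are consequences of the same additivity and hierarchical structure the paper's dichotomy rests on, so the plan is the same, just laid out at a finer level of detail.
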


\begin{proof}
Indeed, assume we have a bifurcation $2\to 1$ or $1\to 2$. Such a
bifurcation may behave in two ways with respect to the new gradings
on circles: either it preserves the total set of gradings (each
considered modulo ${\bf Z}_{2}$) as in the case $(1,0,\dots )\wedge
(1,0,\dots,)\to (0,0\dots)$, or it changes it, as in the case
$(1,0,\dots, )\wedge (1,0,\dots )\to (0,1,\dots)$. In the second
case the parity in one grading (in our case, the second) is
violated, thus, $\partial'$ equals zero.

In the first case we may think that our differetial behaves in the
same way with {\em all} the gradings separately, which returns us to
the case of different gradings coming from different homology
classes.
\end{proof}

The above reasonings lead us to the following
\begin{thm}
The homology of $[[K]]_{mg}$ with respect to $\partial'$ is an
invariant in the corresponding category.
\end{thm}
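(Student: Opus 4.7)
The plan is to reduce the claim to Theorem \ref{mainthm} by verifying that the hierarchical multigrading satisfies the dotting conditions, using the preceding lemma to control the behavior of $\partial''$. First I would recast the preceding lemma as the statement that $\partial=\partial'+\partial''$ with $\partial''$ strictly increasing at least one component of the multigrading by $2$. From this, squaring and projecting $\partial^{2}=0$ to the multigrading-preserving subspace yields $(\partial')^{2}=0$, so $[[K]]_{mg}$ is a bona fide complex with respect to $\partial'$. This is the same projection argument used for Lemma \ref{l1}, and it is completely formal once the lemma is available.

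Next I would verify the three dotting conditions of Section 5 for each component $g_{i}$ of the multigrading. Additivity modulo $2$ under $2\to 1$ and $1\to 2$ bifurcations reduces, via the lemma's case analysis, to additivity of the underlying $\mathbb{Z}_{2^{i}}$-cohomology value of a circle under surgery: if at some level the parity is violated, then the corresponding partial differential is killed by the projection, which is precisely what the dotting condition asks; if no parity is violated, the grading value is literally additive. Preservation under $1\to 1$ bifurcations is immediate since such a surgery does not change the homology class of the circle, hence does not change any $\mathbb{Z}_{2^{i}}$-value. The ``similar arcs are dotted similarly'' condition is built into the definition, because the dottings are intrinsic functions of the cohomology class of each resulting circle in the fixed ambient space, not of the local diagrammatic representation.

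The third condition, that small circles arising in Reidemeister moves are not dotted, follows at once because such circles bound an obvious disc in the ambient thickened surface and therefore represent the trivial class in $H_{1}$; they pair trivially with every cohomology class modulo any $2^{i}$, so every $g_{i}$-grading on them vanishes. With all dotting conditions in place for the family $\{g_{i}\}$, I would apply part (3) of Theorem \ref{mainthm} to conclude invariance of $H_{*}([[K]]_{mg},\partial')$ under the Reidemeister moves of the category up to the usual homological and quantum degree shifts.

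The main obstacle, and the one point to handle with care, is that the $g_{i}$ are not honestly independent dottings: $g_{i+1}$ is defined to be zero whenever $g_{i}$ is nonzero. I expect this is harmless because the hierarchical definition is a \emph{refinement}, and the only way the proof of Theorem \ref{mainthm}(3) uses independence is through (a) additivity under $2\to 1$ bifurcations and (b) the vanishing of small-curl dottings, both of which hold for the hierarchical gradings by the preceding paragraph. If I encounter a mismatch, the fallback is the lemma's case split: in every bifurcation either all hierarchical parities are simultaneously preserved (so the local argument is identical to the one-grading case applied componentwise) or $\partial'$ is zero on that summand, and the invariance proof (injectivity of $\Delta$ and surjectivity of $m$ for undotted small circles) goes through term by term exactly as in Section 4.
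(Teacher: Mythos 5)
Your proposal is correct and follows essentially the same route as the paper, which simply invokes the preceding lemma's case analysis (``additivity holds for all levels simultaneously'' versus ``some parity is violated, so $\partial'$ vanishes on that edge'') and reduces to Theorem \ref{mainthm}(3); your fourth paragraph is exactly the justification the paper leaves implicit. One caution for the record: your recast of the lemma as ``$\partial''$ strictly increases at least one component by $2$'' is not literally correct (in the parity-violating case a component can change by $\pm 1$), and for the projection argument $(\partial')^{2}=0$ one actually needs a monotone order on multigradings that $\partial''$ strictly increases---the lexicographic order from the most-significant $g_{1}$ downward does the job, since the first coordinate in which the dotting vectors of the bifurcating circles disagree always moves in the positive direction---but this is a gap the paper's own terse treatment shares, and your case split recovers the same conclusion.
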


Analogously, one may consider the case when we have $H^{1}$ of rank
greater than one.

\section{Khovanov's Frobenius theory}

The Khovanov theory for classical knots has some natural
generalisations, some of them were first discovered by Khovanov.
Here we briefly discuss the generali\-sa\-tion of them for the case
of {\em knots in thickened surfaces and additional gradings}. The
corresponding results without additional gradings were published in
\cite{Kho, Izv}.

\newcommand{\RR}{\cal R}
\renewcommand{\AA}{\cal A}

Let $\RR,\AA$ be commutative rings, and let $\iota: \RR\to \AA$ be
an embedding, such that
 $\iota(1)=1$. The restriction functor mapping $\AA$-modules to
 $\RR$-modules has a right conjugate and a left conjugate: the induction functor
 $Ind(M)=\AA\otimes_{\RR}M$ and the coinduction functor.
 $CoInd(M)=Hom_{\RR}(\AA,M)$. One says that $\iota$ is a Frobenius embedding if these two
functors are isomorphic. Equivalently: the embedding $\iota$ is
Frobenius, if the restriction function has a two-sided dual functor.
In this case one says also that the ring $\AA$ is a {\em Frobenius
extension} of $\RR$ by means of $\iota$.

In  \cite{Frobenius}, Khovanov asked the question: to find a couple
of linear spaces $(\AA,\RR)$ such that, taking $\RR$ as the basic
coefficient ring and a Frobenius extension $\AA$ over  $\RR$ as the
homology ring of the unknot, we would be able to construct a link
homology theory ``in the same way'' as the usual homology theory.

Here ``in the same manner'' means that we consider the state cube,
where at each vertex we put a tensor power of $\AA$ (over $\RR$),
corresponding to the number of circles in the given state, and
define the partial differentials by means of $m$ and $\Delta$
(multiplication and comultiplication), and then put signs on the
edges of the cube and normalise the whole construction by height and
grading shifts (he did not use wedge product or involution in the
Frobenius algebra).

Khovanov showed that the invariance under the first Reidemeister
move requires that $\AA$ is a two-dimensional module over $\RR$ and
gave necessary an sufficeint conditions for the existence of such an
invariant link homology theory.

{\em Note that in the present section we shall mainly work with the
classical notation of Khovanov, that is, we use symmetric tensor
powers and then add minus signs to the cube}, thus restricting
ourselves for the case when no $1\to 1$-bifurcations in the state
cube occur. We have partially generalised Khovanov Frobenius theory
for the case of arbitrary virtual knots, and we shall return to that
case in the end of the present section.

In \cite{Frobenius}, it is also shown that any link homology theory
of such sort can be obtained by means of some operations (basis
change, twisting and duality) from the following solution called
{\em universal}:

\newcommand{\grad}{\operatorname{deg}}

\begin{enumerate}

\item ${\RR}={\bf Z}[h,t]$.

\item ${\AA}={\RR}[X]\slash (X^{2}-hX-t),$

\item $\grad X=2,\grad h=2, \grad t=4$;

\item $\Delta(1)=1\otimes X+X\otimes 1 - h 1\otimes 1$

\item $\Delta(X)=X\otimes X+t 1\otimes 1$.

\end{enumerate}

As we see, the multiplication in the algebra $\AA$ preserves the
grading, and the comultiplication increases this by $2$.

We omit the normalisation regulating the corresponding gradings.

First note that this Frobenius theory contains (as an important
partial case) the Lee-Rasmussen theory, see \cite{Lee,Ras}, when we
specify $t=h=1$. The Lee-Rasmussen theory, has one grading less:
indeed, the differentials here do not respect the quantum grading.

We call the theory constucted above the  {\em universal
$(\RR,\AA)$-construction}. The corresponding homology of a
(classical) link $L$ is be denoted by $Kh_{U}(L)$.

The main question we address in the following section is: {\em how
to split the differentials above into $\partial'$ and $\partial''$}?

Note that if we introduce the new grading just by dotting and then
counting the number of $\ddX$ minus the number of $\dd1$, the
differential $\partial$ (which is some tensor product (or wedge
product) of one $\Delta$ or one $\mu$ with the identity operator)
would not behave so nicely with respect to the new grading. Namely,
the mapping $\Delta$ may take $X$ to the sum $\ddX\wedge
\ddX+\dd1\wedge\dd1$, see Fig. \ref{dots2}.

\begin{figure}
\centering\includegraphics[width=200pt]{dots.eps}
\label{dots2}
\end{figure}

The mapping to the first term {\em increases} the grading whence the
mapping to the second term {\em decreases} it.

Thus, we have to repair the dotted grading. The correct answer is:
define the dotted grading $gr$ as the difference between
${\#}\ddX-{\#}\dd1$ plus half the total degree of monomials in $t$
and $h$.

There is a trick with $\lambda$, which goes as follows. Denote the
usual Khovanov differential by $\partial$, and denote the
``Frobenius addition'' containing $h$ and $t$ by $\partial_{F}$ so
that we totally have $\partial+\partial_{F}$. According to our
rules, if some circles are dotted, and the Khovanov (Frobenius)
theory is well established then we can introduce the new ``dotted
grading'' $gr$ as before, which splits the differential into two
parts $\partial=\partial'$.

\begin{thm}
Consider the basic ring ${\bf Z}[h,t,\lambda|\lambda h=\lambda t=0]$
Then the homology of the Khovanov Frobenius complex with respect to
the differential $\partial_{F}+\lambda\partial''$ is invariant.
\end{thm}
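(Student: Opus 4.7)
The plan is to establish two things: the algebraic identity $(\partial_{F}+\lambda\partial'')^{2}=0$, and invariance of the resulting homology under $\Omega_{1},\Omega_{2},\Omega_{3}$. Both will be obtained by bookkeeping on top of what has already been proved for the Khovanov--Frobenius complex ($\partial_{F}^{2}=0$) and for the dotted Khovanov complex (Theorem~\ref{mainthm}); the role of the defining relations $\lambda h=\lambda t=0$ is precisely to let the two layers interact harmlessly.

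For the identity, I would decompose $\partial_{F}=\partial_{0}+h\,\delta_{h}+t\,\delta_{t}$, where $\partial_{0}$ is the $h=t=0$ specialization split as $\partial'+\partial''$ along the repaired dotted grading $gr=\#\ddX-\#\dd1+\tfrac{1}{2}\deg_{h,t}$, and $\delta_{h},\delta_{t}$ collect the Frobenius corrections. Then
\[
(\partial_{F}+\lambda\partial'')^{2}=\partial_{F}^{2}+\lambda(\partial_{F}\partial''+\partial''\partial_{F})+\lambda^{2}(\partial'')^{2}.
\]
The outer summands vanish: $\partial_{F}^{2}=0$ by hypothesis, while $(\partial'')^{2}=0$ and $\partial'\partial''+\partial''\partial'=0$ fall out of the standard grading decomposition of $\partial_{0}^{2}=0$ (as in Lemma~\ref{l1}). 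Substituting the decomposition of $\partial_{F}$ into the middle summand produces a purely Khovanov contribution $\lambda(\partial'\partial''+\partial''\partial'+2(\partial'')^{2})=0$ by the same graded decomposition, together with contributions each carrying a factor of $h$ or $t$ and hence annihilated by $\lambda h=\lambda t=0$.

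For Reidemeister invariance I would re-run Khovanov's proof for the universal Frobenius complex one move at a time. Its crucial technical inputs are the injectivity of $\Delta$ and the surjectivity of $m$ on the small-circle factor, which ensure that the standard acyclic subcomplexes cancel inside the cube of resolutions (as in the explicit $\Omega_{2}$ argument reproduced at the end of Section~4, and analogously for $\Omega_{1}$ and $\Omega_{3}$). By the third dotting condition every small circle produced by a Reidemeister move is undotted, so along the cube edges responsible for those cancellations $\partial''$ is identically zero; the perturbation $\lambda\partial''$ is invisible there, and the acyclic-subcomplex argument carries over unchanged from $\partial_{F}$ to $\partial_{F}+\lambda\partial''$.

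The main obstacle I anticipate is the bookkeeping with the Frobenius corrections $h\,\delta_{h}$ and $t\,\delta_{t}$, which in principle could interfere both with $d^{2}=0$ and with the Reidemeister chain homotopies. The device $\lambda h=\lambda t=0$ is engineered precisely to neutralise them: it forces $\lambda\partial''$ to see only the $h=t=0$ stratum, where the new grading behaves exactly as in Theorem~\ref{mainthm}, while leaving $\partial_{F}$ itself untouched. Once the compatibility of the repaired grading with the decomposition $\partial_{F}=\partial'+\partial''+h\,\delta_{h}+t\,\delta_{t}$ is checked (a routine count with $\deg h=2$, $\deg t=4$), the two pieces fit together automatically.
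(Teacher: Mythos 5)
Your proposal follows the paper's approach: verify $(\partial_{F}+\lambda\partial'')^{2}=0$ by observing that the cross-terms between the Frobenius corrections (carrying $h$ or $t$) and $\lambda\partial''$ are annihilated by the relations $\lambda h=\lambda t=0$, and that the remaining cross-terms vanish by the graded decomposition of the $h=t=0$ differential; then invoke the injectivity of $\Delta$ and surjectivity of $m$ on undotted small circles for Reidemeister invariance. The paper's own proof is considerably terser --- it spells out only the cancellation showing $d^{2}=0$ and leaves the Reidemeister-invariance step implicit, appealing to the pattern of Theorem~\ref{mainthm}; your explicit remark that $\partial''$ vanishes on the cube edges touching the undotted small circles (so $\lambda\partial''$ is invisible to the acyclic-subcomplex cancellations) is exactly the point being taken for granted, and it is worth having written down.
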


The proof goes as follows. We only need to  mention that is that the
square of this differential equals zero, because in the expression
$(\partial_{F}+\lambda\partial'')^{2}$ the interaction between the
``Frobenius part'' of $\partial_{F}$ and $\lambda\partial''$ gets
cancelled. This proves that the complex is well defined with respect
to the differential $\partial''$. However, one of our goals is to
approach the Lee-Rasmussen theory, which is defined over ${\bf Q}$
with $h=t=1$. For these purposes, the approach above is not
satisfactory.

Then, the terms in the differential corresponding to the ``usual''
multiplication and comultiplication (without new $t$ and $h$) behave
as before. Also, we know the behaviour of the grading when we have
no dotted circles; it is correlated by degrees of $h$ and $t$.

Consider the remaining cases.

$m:\ddX\otimes \ddX\to  t\cdot 1,m:\ddX\otimes X\to t\cdot \dd1
\Delta: \ddX\to t\cdot 1\otimes 1,\Delta:\ddX\to \dd1\wedge \dd1$.

But, looking carefully at the usual quantum grading, we shall see
that the dotted grading decreases only in the case when the usual
quantum grading increases. Namely, for $\ddX\times \ddX\to 1$ we
increase the usual grading by four (because the latter $1$ is indeed
shifted by $1$. So, the idea is to add $\frac{4}{2}=2$ to our usual
dotted grading to get a better dotted grading. Thus we get the
following

\begin{lm}
The differential $\partial$ either increases the grading $\partial$
by $2$ or does not increase it at all.
\end{lm}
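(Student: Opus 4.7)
The plan is an exhaustive local case analysis, following the pattern of the proof of Lemma~\ref{l1}. Because $\partial$ is a signed sum of partial differentials, each of the form $m\otimes\mathrm{id}$ or $\Delta\otimes\mathrm{id}$ with possible Frobenius corrections in $h$ and $t$, and because the refined grading $gr=\#\ddX-\#\dd1+\tfrac{1}{2}\dg(\text{monomial in }h,t)$ is additive across the tensor/wedge product, the problem reduces to checking, for every summand appearing in $m(\alpha\otimes\beta)$ and in $\Delta(\alpha)$ with $\alpha,\beta\in\{1,X,\dd1,\ddX\}$, that the change $gr_{\text{out}}-gr_{\text{in}}$ lies in $\{0,2\}$. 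Circles untouched by the partial differential contribute zero to this change, so they can be ignored.

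First I would dispose of the summands that contain no factor of $h$ or $t$. For these the half-degree contribution to $gr$ is zero on both sides, so the $gr$-change coincides with the change in $\#\ddX-\#\dd1$. By the dotting axioms (additivity mod $2$ under $2\to 1$ and $1\to 2$ bifurcations, and preservation under $1\to 1$ bifurcations) only those input/output dotting patterns listed in Lemma~\ref{l1} can arise, and that lemma already gives a change of $0$ or $+2$ in every case. This settles the ``classical'' Khovanov part of $\partial$ automatically.

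Second I would treat the Frobenius summands, those carrying a factor of $t$ (the remaining cases explicitly isolated just before the lemma). The pivotal observation, which motivates the $+\tfrac{1}{2}\dg$ correction in the definition of $gr$, is that $\tfrac{1}{2}\dg t=2$ is exactly the extra grading that must be added to the output to compensate for the disappearance of two $X$-labels into a unit. The archetypal case is $m(\ddX\otimes\ddX)=t\cdot 1$, where $gr_{\text{in}}=2$ and $gr_{\text{out}}=0+2=2$, yielding a net shift of $0$; the cases $m(\ddX\otimes X)=t\cdot\dd1$ and the Frobenius summands of $\Delta(\ddX)$, $\Delta(X)$ are handled by the same compensation, and the undotted Frobenius additions are handled by the quantum-grading correlation noted by the authors in the paragraph preceding the lemma.

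The main obstacle is the careful bookkeeping for the ``mixed'' summands, where a change in dotting and a Frobenius scalar appear simultaneously (e.g.\ a summand $t\cdot(\dd1\otimes 1)$ coming from $\Delta(\ddX)$), since one must verify that the dotting shift and the half-degree shift combine to a value in $\{0,2\}$ and never to something in between. Once the short table of $m$- and $\Delta$-outputs has been written out and checked, the statement for $\partial$ follows by linearity and by tensor-additivity of $gr$ over the untouched factors.
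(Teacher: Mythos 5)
Your overall strategy --- reducing to a finite table of $m$- and $\Delta$-summands and checking $gr_{\text{out}}-gr_{\text{in}}\in\{0,2\}$ term by term, with the half-degree correction providing exactly $+2$ per factor of $t$ --- is precisely the ``direct calculation'' the paper gestures at but never writes out, and your archetypal example $m(\ddX\wedge\ddX)=t\cdot 1$ (where $gr$ goes $2\to 0+\tfrac{1}{2}\cdot 4=2$) is the right computation. The reduction of the $h=t=0$ summands to Lemma~\ref{l1} is also correct.

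However, there is a genuine gap that both you and the paper leave open, and your sentence ``the undotted Frobenius additions are handled by the quantum-grading correlation'' does not close it. The correction $\tfrac{1}{2}\deg(\text{monomial})$ works for $t$ because $\tfrac{1}{2}\deg t=2$; but $\deg h=2$, so $\tfrac{1}{2}\deg h=1$, and the $h$-summands shift $gr$ by an \emph{odd} amount. Concretely, for undotted circles, $\Delta(1)\ni -h\cdot 1\wedge 1$ sends $gr=0$ to $gr=0+1=1$, and $m(X\wedge X)\ni h\cdot X$ sends $gr=0$ to $gr=1$; for the dotted input $m(\ddX\wedge\ddX)\ni h\cdot X$ one gets $gr=2\to 1$, a drop of $1$. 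None of these land in $\{0,2\}$, so the statement is false over the universal ring ${\bf Z}[h,t]$ with the grading as defined. The paper's own list of ``remaining cases'' just before the lemma contains only $t$-terms, and the subsequent subsection starts with ``we assume for simplicity $h=0$'' and then introduces a variable $c$ with $c^2=h$ and $\deg c=1$, explicitly yielding \emph{half-integer} gradings --- an implicit acknowledgment that the $h$-summands do not fit the $\{0,2\}$ dichotomy. So a complete proof must either restrict to $h=0$ (Lee's specialization, which the paper actually needs) or re-grade $h$ through such a square root; your proposal should state which of these it is doing rather than appeal to an unspecified correlation.
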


But, looking carefully at the usual quantum grading, we shall see
that the dotted grading decreases only in the case when the usual
quantum grading increases. Namely, for $\ddX\times \ddX\to 1$ we
increase the usual grading by four (because the $1$ in the
right-hand side is indeed shifted by $1$. So, the idea is to add
$\frac{4}{2}=2$ to our usual dotted grading to get a better dotted
grading.

Let us look at our dotted grading more carefully. Denote the former
dotted grading by $gr'$, and let us construct the true dotted
grading $gr$ by varying $gr$.

We count the usual quantum grading. It is equal to
$tot(1)-tot(X)+h$, where $tot(1)$ is the total number of circles
marked by $1$ or by $\dd1$, $tot(X)$ is the total number of circles
marked by $X$ or by $\ddX$, and $h$ is the height. Then we set

$$gr=gr'+\frac{tot(1)-tot(X)+h}{2}=\frac{{\#}\ddX+{\#}1-{\#}{\dot 1}-{\#}X+h}{2}.$$

\begin{lm}
The differential $\partial$ defined above either preserves $gr$ or
increases it by $2$.
\end{lm}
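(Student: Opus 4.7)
The plan is to verify the claim by reducing to a finite case check on the local partial differentials. Recall that $\partial$ is a sum, over classical crossings, of operations which apply a single local $m$ or $\Delta$ to the circles incident to the chosen crossing (tensored with the identity on the remaining tensor factors); moreover $gr$ is additive over tensor factors. So it suffices to verify, for each local $m$ and $\Delta$ in each of its dotted and Frobenius variants, that the change in $gr$ lies in $\{0,+2\}$.

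For organisation I would split each local operation as $\partial_{0}+\partial_{F}$, where $\partial_{0}$ is the ``classical'' Khovanov part (no $h,t$ monomial in the output) and $\partial_{F}$ comprises the Frobenius summands (those carrying a factor of $h$ or $t$). The classical part is already under control: by the standard grading argument, $\partial_{0}$ preserves the quantum grading $j=tot(1)-tot(X)+h$ even on dotted circles; and by Lemma~\ref{l1} applied in the present context, $\partial_{0}$ further splits as $\partial_{0}'+\partial_{0}''$, where $\partial_{0}'$ preserves $gr'=\#\ddX-\#\dd1$ and $\partial_{0}''$ raises $gr'$ by $2$. Both pieces preserve $j$, so $\partial_{0}'$ preserves $gr=gr'+j/2$ and $\partial_{0}''$ shifts $gr$ by $+2$; the classical contribution is therefore accounted for.

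The genuinely new work is the Frobenius part $\partial_{F}$. The atomic summands to examine are: the $hX$ and $t\cdot 1$ summands of $m(X\otimes X)$, the $-h\cdot 1\otimes 1$ summand of $\Delta(1)$, and the $t\cdot 1\otimes 1$ summand of $\Delta(X)$, together with the dotted versions obtained by propagating dots consistently with the dot-contributions assigned to $h$ and $t$. For each summand I would compute $\Delta gr'$ and $\Delta j$ separately and then combine them in $\Delta gr=\Delta gr'+\Delta j/2$. The crucial observation, highlighted in the paragraph immediately preceding the lemma, is that any Frobenius term which \emph{decreases} $gr'$ simultaneously \emph{increases} $j$ by a compensating amount: the prototypical example $\ddX\wedge\ddX\to 1$ drops $gr'$ by $2$ but raises $j$ by $4$, giving $\Delta gr=-2+4/2=0$; the $h$-type summands, in parallel fashion, raise $j$ by $2$ and give $\Delta gr=+2$. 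So each Frobenius summand contributes a change in $\{0,+2\}$, completing the case check.

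The main obstacle, as I see it, is bookkeeping rather than conceptual: one must pin down, once and for all, how the formal coefficients $h$ and $t$ themselves feed into the mod-$2$ dot count (so that the dotting conditions remain respected once we pass from the classical to the Frobenius differential), and then run through the finite table of cases without counting or sign errors. Once the convention is fixed, the argument reduces to the straightforward tabulation sketched above.
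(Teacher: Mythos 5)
Your approach matches what the paper has in mind: the paper itself gives essentially no proof beyond ``direct calculation,'' but the two paragraphs preceding the lemma are exactly the observation you isolate — that the only drops in $gr'$ come from Frobenius summands that simultaneously raise the quantum grading $j$, and that $gr = gr' + j/2$ absorbs this. Your organisation (localise to a single $m$ or $\Delta$, split $\partial=\partial_0+\partial_F$, reduce the classical part to the already-established behaviour of $gr'$ and $j$, then tabulate the finitely many Frobenius summands) is the natural way to carry out the ``direct calculation,'' and for the $t$-summands the arithmetic checks out as you say: $\ddX\wedge\ddX\to t\cdot 1$ has $\Delta gr'=-2$, $\Delta j=+4$, hence $\Delta gr=0$.

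However, your final sentence about the $h$-type summands is off. For the term $\Delta(1)\ni -h\cdot 1\wedge 1$, the count is $\Delta gr'=0$ (nothing dotted moves) and $\Delta j = (tot(1)-tot(X))\text{-change} + \text{height-change} = +1 + 1 = +2$, so $\Delta gr = 0 + 2/2 = +1$, not $+2$. Worse, the dotted variant $1\to -h\cdot\dd1\wedge\dd1$ (permitted by mod-$2$ additivity) gives $\Delta gr'=-2$, $\Delta j=+2$, hence $\Delta gr = -1$, which contradicts the lemma as stated. The $t$-summands work because $\deg t=4$ supplies a compensation of $4/2=2$; the $h$-summands only supply $2/2=1$. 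This is not merely your slip — the paper's own case list just before the lemma runs only over $t$-type summands and never examines $h$-type ones, and the subsequent subsection silently sets $h=0$ before doing any serious computation. So the conceptually correct move is either to restrict to $h=0$ explicitly before stating the lemma, or to work with the half-integer grading built from $c$ with $c^2=h$, $\deg c=1$ (which the paper introduces later for the $\mathbf{Z}_2$ case) — under that convention $h$-terms shift $gr$ by exactly one $c$-unit and the statement can be salvaged with half-integer increments. As written, though, asserting that the $h$-summands shift $gr$ by $+2$ is an arithmetic error and leaves a genuine gap in the case check.
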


The proof follows from a direct calculation.

Then it is possible to split $\partial$ into $\partial'$ (preserving
the grading) and $\partial''$ increasing that by $2$, and consider
the dotted homology of $[[K]]_{g}$ with respect to $\partial$. This
homology will be invariant.

If we look at this grading more carefully, we will see that the new
``Frobenius'' mappings vanish when they are applied to sets of usual
(not dotted) circles.

Namely, for $\partial: X\otimes X\to t\cdot 1$ we have: $gr'$ does
not change, whence the usual grading [coming from counting
$tot(1)-tot(X)+h$] increases.

This means, that if we have no dots at all, the differential
$\partial'$ coincides with the usual Khovanov differential (without
$h$ and $t$).

Considering the Lee-Rasmussen theory for $t=h=1$, we get a complex
$[[K]]_{LR}$ with a differential $\partial_{LR}$ which coincides
with the usual Khovanov differential in the case of classical knots.
Note that the complex $[[K]]_{LR}$ has two gradings: the height and
the grading $gr$ (the quantum grading was lost).

However, in the dotted picture, this differential has some other
interesting terms, like $\ddX\otimes \ddX \to 1$.

\subsection{Yet another definition of the Khovanov homology}

If we look at the complex constructed above from in the case we have
no additional (dotted) gradings at all, we see that the new grading
prohibits exactly those parts of the differential $\partial_{\Phi}$
which deal with $t$: e.g., $X\times X\to t\cdot 1$ does not change
the dotted grading, but it does change the usual quantum grading if
we forget about $t$.

Thus, {\em the definition above with $t=1$ leads to the usual
Khovanov homology if no circle is dotted}.

On the other hand, if many circles are dotted, this is a sort of
Lee-Rasmussen homology theory.

It is interesting that we can use a mixture to get another
definition of the Khovanov homology theory. Namely, take a knot
diagram $K$ and put dots on circles in an arbitrary way. Then for
every dotted circle change the notation: replace $\dd1$ by $\ddX$
and vice versa. The resulting complex would be precisely the
Khovanov complex up to some renormalisation in the new grading which
becomes coincident with the usual quantum grading.

This effect is interesting because it allows one to handle the
situation with braids: whenever we perform the second Markov move,
we replace $\dd1$ by $\ddX$, which leads to the injectivity of
$\Delta$ and surjectivity of $m$. Unfortunately, this gives us no
new homology theory, but it allows one to look at the usual Khovanov
homology from another point of view.

\subsection{Khovanov Frobenius theory modulo ${\bf Z}_{2}$ in the general case}

The aim of the present section is to define the differential
$\partial_{F}$ generalizing the theory described above for the case
of arbitrary virtual knots in the ${\bf Z}_{2}$ case. We shall
describe the difficulties that occur in the general case.

The main difficulty here is to define the differential corresponding
to the $1\to 1$-bifurcation.

We start up with the chain structure of the complex. First, we
assume for simplicity $h=0$, the case of generic $h$ will be
considered afterwards.

We deal with the ring $R={\bf Z}[t]$, where $t$ has grading $4$.

With every circle in every Kauffman state we associate the graded
module $V$ over $R$ freely generated by $1$ of grading $0$ and $X$
of grading $2$ ($t$ has grading $4$, as above). The generator $1$ is
assumed to be fixed for any circle; the generator $X$ depends on the
orientation of the circle as before.

With each Kauffman state with $n$ corresponding circles, we
associate the $n$-th exterior power of $V$, and we define the
following operations ``muliplication and comultiplication'' just as
before, however, corrected by terms containing $h$:

$m(1_{1}\wedge 1_{2})=1,m(X_{1}\wedge 1_{2})=m(1_{1}\wedge
X_{2})=X,$

$m(X_{1}\wedge X_{2})=0$

$\Delta(1)=1_{1}\wedge X_{2}+X_{1}\wedge 1_{2}$

$\Delta(X)=X_{1}\otimes X_{2}+t 1_{1}\otimes 1_{2},$ where it is
assumed (as before) that we deal with the first two circles in the
tensor product, and the first one is left (resp., upper), whence the
second one is left (resp., lower).

For all $1\to 1$-bifurcations, we set the differential to be equal
to zero.

For all other bifurcations ($2\to 1$ or $1\to 2$), we define the
differential $\partial$ just as in section 3.

Denote the resulting set of chain spaces for a given virtual knot
$K$ by $[[K]]_{t}$.

\begin{thm}
The differential $\partial$ defines a complex structure on
$[[K]]_{t}$, so that the homology of $[[K]]_{t}$ with respect to
$\partial$ is an invariant of the link $K$.
\end{thm}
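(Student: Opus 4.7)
The statement has two parts---that $\partial^{2}=0$, and that the resulting homology is invariant under the generalized Reidemeister moves---and I would handle each by reducing to the already-established $t=0$ case (Theorem 1 and the virtual theory of \cite{Izv}). The first move is to decompose $\partial=\partial_{0}+\partial_{t}$, where $\partial_{0}$ is the usual Khovanov differential for virtual links and $\partial_{t}$ is the contribution of the extra summand $t\cdot 1_{1}\wedge 1_{2}$ in $\Delta(X)$. Since $\partial_{0}^{2}=0$ is known, it suffices to verify
\[\partial_{0}\partial_{t}+\partial_{t}\partial_{0}=0\qquad\text{and}\qquad \partial_{t}^{2}=0\]
on every two-dimensional face of the bifurcation cube.

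I would do this face by face, stratifying 2-faces by the pair of bifurcation types on their edges. For faces composed entirely of honest $1\!\to\!2$ and $2\!\to\!1$ bifurcations the check reduces to the algebraic identities satisfied by the Frobenius algebra ${\bf Z}[t][X]/(X^{2}-t)$---coassociativity of $\Delta$, associativity of $m$, and the Frobenius compatibility $\Delta\circ m=(m\otimes\mathrm{id})\circ(\mathrm{id}\otimes\Delta)$---where the new $t$-terms, being supported on intermediate vertices carrying an $X$, enter symmetrically on the two paths around each square and cancel. For faces having an edge of $1\!\to\!1$ type the corresponding partial differential is zero by definition, and commutativity of the face becomes the statement that the composition of the other two partial differentials vanishes; a local analysis of how a circle undergoing a $1\!\to\!1$ surgery can interact with a neighbouring $1\!\to\!2$ or $2\!\to\!1$ bifurcation should eliminate the dangerous configurations, exactly as in the $t=0$ proof of \cite{Izv}.

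Once $\partial^{2}=0$ is in hand, invariance under the Reidemeister moves and the detour move follows the acyclic-subcomplex pattern of Section 4, provided the two structural properties used there---injectivity of $\Delta$ and surjectivity of $m$ on the small circles produced by the moves---survive the introduction of $t$. They do: $\Delta(1)=1\wedge X+X\wedge 1$ is $t$-free, $\Delta(X)=X\wedge X+t\cdot 1\wedge 1$ retains its nonzero leading term $X\wedge X$ over ${\bf Z}[t]$, and $m$ acting on the small-circle slot is untouched. The $\Omega_{1}$ argument then applies verbatim; the reduction to $\Omega_{2}$ and $\Omega_{3}$ proceeds exactly as in Section 4. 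The principal obstacle I anticipate is the $1\!\to\!1$-bifurcation case in the verification of $\partial^{2}=0$: it has no classical analogue, it is bypassed by Khovanov's symmetric-tensor framework, and it is precisely here that the wedge-product sign discipline of \cite{Izv} must be combined with the new $\partial_{t}$-contributions. The bookkeeping is finite but intricate, and will be the main technical labour of the proof.
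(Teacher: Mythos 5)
Your proposal follows the same overall architecture as the paper's own argument (which is stated very briefly): the well-definedness is a face-by-face anticommutativity check in the spirit of \cite{Frobenius} combined with \cite{Izv}, and the invariance is again the acyclic-subcomplex argument driven by surjectivity of $m$ and injectivity of $\Delta$. Your identification of the underlying Frobenius algebra as ${\bf Z}[t][X]/(X^{2}-t)$ is in fact what the theory requires: the displayed formula $m(X_{1}\wedge X_{2})=0$ in the paper is a slip, and must read $m(X_{1}\wedge X_{2})=t\cdot 1$, since otherwise Frobenius compatibility $\Delta\circ m=(m\otimes\mathrm{id})\circ(\mathrm{id}\otimes\Delta)$ fails on $X\otimes X$ and the cube does not close up.

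Where your proposal is genuinely thinner than the paper is exactly the point you flag as ``the main technical labour,'' namely the faces involving $1\to 1$ bifurcations. You write that ``a local analysis $\dots$ should eliminate the dangerous configurations, exactly as in the $t=0$ proof,'' but you do not isolate the structural reason that the \cite{Izv} wedge-product sign discipline survives the introduction of $t$. That reason is the involution $\sigma:1\mapsto 1$, $X\mapsto -X$: one checks that $\sigma\circ m=m\circ(\sigma\otimes\sigma)$ and $(\sigma\otimes\sigma)\circ\Delta=-\,\Delta\circ\sigma$ continue to hold for the $t$-deformed operations (this is precisely where $h=0$ is used, since the $h$-correction $-h\,1\wedge 1$ in $\Delta(1)$ is $\sigma$-invariant and would break the sign). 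Because the local re-orientation of a circle acts by $\sigma$, this identity is what propagates all the sign bookkeeping of \cite{Izv}. Concretely, in the prototypical face of \cite{Izv} (the lower path consisting of two $1\to 1$ bifurcations, the upper path $\Delta$ followed by $m$), the input $X$ gives $\Delta(X)=X\wedge X+t\cdot 1\wedge 1$, the intermediate basis change flips the sign of the $X\wedge X$ summand but fixes $1\wedge 1$, and then $m(-X\wedge X+t\cdot 1\wedge 1)=-t+t=0$; your appeal to the new $t$-terms ``entering symmetrically and cancelling'' does not by itself produce this sign. So the structure of the proof is right, but the one nontrivial observation the paper actually records --- the $\sigma$-equivariance of $m$ and the $\sigma$-anti-equivariance of $\Delta$ with $t$ present and $h=0$ --- is missing from your account, and without it the $1\to 1$ check is not justified.
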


The well-definedness proof actually repeats the main points of
\cite{Frobenius} together with those in \cite{Izv}: one should
consider all $2$-faces of the corresponding cube and prove that they
anticommute. The proof of the invariance under Reidemeister moves
follows from the surjectivity of $m$ and injectivity of $\Delta$.

However, here we do not touch on the variable $h$. The reason why
the trick proposed in \cite{Izv} behaves nicely when we add the
variable $t$ is the following: both in the usual Khovanov homology
theory and in the Frobenius theory with some $t$ and $h=0$, the
involution on the space $V=\{1,X\}$ defined by $1\mapsto 1,X\mapsto
-X$ behaves well with respect to the operations $\Delta$ and $m$: it
changes signs of $\Delta$ and preserves the sign of $\mu$.

However, when we add a new variable $t$, we will not see this effect
any more: the mapping $\Delta$ takes $1\wedge 1\to 1\wedge X+X\wedge
1-h\cdot 1\wedge 1$. Here the involution $X\to -X$ changes the sign
of one part ($1\wedge X+ X\wedge 1$) and preserves the other part
($h\cdot 1\wedge 1$).

Also, the routine check of the well-definedness (as in \cite{Izv})
of the complex, that is, anti-commutativity of the $2$-faces of the
cube, leads to an example shown below (we are citing \cite{Izv}, see
Fig. \ref{norient}) for the case $t=0$.

\begin{figure}
\centering\includegraphics[width=300pt]{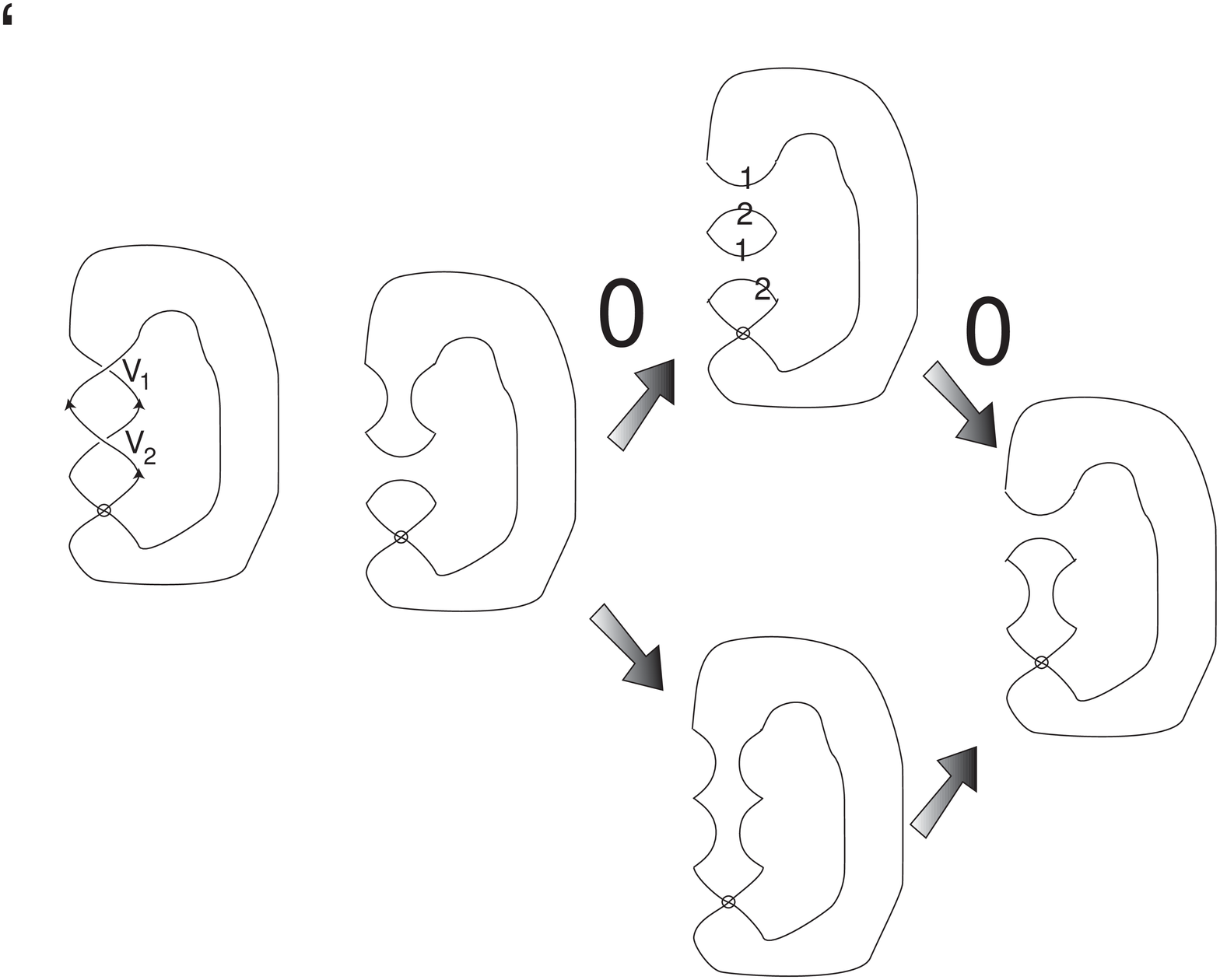} \caption{A face
of the cube} \label{norient}
\end{figure}

For the lower composition, we have the identical zero map by
definition. Substituting $X$ into the upper composition, we get $\pm
X\wedge X$ at the first step and $0$ at the second step.
Substituting $1$, we first get $1_{1}\wedge X_{2}+X_{1}\wedge 1_{2}$
here the index refers to the number of circle (the first circle is
the big one), and the second index refers to the crossing number.
While passing to the second crossing $V_{2}$ the circles change
their roles: the first circle becomes the lower one, and the second
circle becomes the upper one. Moreover, for the first circle we get
a basis change: $X$ maps to $-X$. Thus we get $-X\wedge 1+ 1\wedge
X$, which is taken to zero by the  multiplication $m$.

The example above is in fact the key example of \cite{Izv}; it works
without any changes when $h=0$ (because $t$ does not appear in the
comultiplication of $1$ or in the multiplication of $1\wedge X$).

But in the case $h=0$ it does appear, and this would lead to the
fact that the $1\to 1$-bifurcation should not be zero any more. We
will in fact need to introduce a new variable being the square root
of $h$.

On the other hand, $h$ itself should be treated in a special way so
that the multiplication $m$ and comultiplication $\Delta$ behave
nicely with respect to $1\mapsto 1, X\mapsto X$.


We shall consider this problem in a separate publication.

\subsubsection{The ${\bf Z}_{2}$-case}

We first consider the ${\bf Z}_{2}$-case solution given in
\cite{Kho}. First note that there is no difference between $\wedge$
and $\otimes$, and we shall use the notation $\otimes$.

We set all $1\to 1$ type partial differentials to be zero.

Here will show how the square root of $h$ appears. Of course, in
this case we shall not need exterior products and control the signs.
Consider the basic ring of coefficients ${\bf Z}_{2}[t,c]$ with
$\deg t=4, \deg c=1$ (we assume $c^{2}=h$). Now, consider Fig.
\ref{norient}. We have the following situation: in the lower
composition we have two maps corresponding to $1\to 1$ bifurcations,
thus the corresponding matrix should look like $I\cdot I$; in the
upper part we have the composition of two matrices $\Delta$ and then
$\mu$. Starting with $1$, we get $\Delta(1)=1\otimes X+X\otimes 1+h
1\times 1$. Multiplying, we see that $X\otimes 1$ and $1\otimes X$
cancel each other, and the only remaining term is $h\cdot 1$. Now,
if we start with $X$, we get $X\to X\otimes X+t\cdot 1\otimes 1$.
After the multiplication, we get $h X+t+t=h\cdot X$ (we are dealing
with the ${\bf Z}_{2}$ case). Now we see that the corresponding
transformation matrix looks like
\begin{equation}
\left(\begin{array}{c} 1 \\ X \end{array}\right)\mapsto
\left(\begin{array}{cc}h & 0 \\ 0 & h\end{array}\right) \cdot
\left(\begin{array}{c} 1 \\ X\end{array}\right)
\end{equation}

For this scalar matrix $h\cdot Id$ we set the bifurcation
corresponding to the $1\to 1$-mapping to be $c\cdot Id$, and then
any face of the bifurcation cube corresponding to Fig. \ref{norient}
will (anti)commute. Then it is not difficult  to see (see
\cite{Kho}) that with this {\em scalar} $1\to 1$-bifurcation matrix,
all other faces (anti)commute as well.

Now, the dotted gradings $gr$ appear straigthforwardly by counting
{\em monomials in $t$ and $c$} and correcting $gr'$ by using this
monomials. Denote the obtained homology by $Kh(K)_{tc}$.

Note that the degree of $c$ is $1$, so we will have {\em
half-integer gradings}. This immeadiately leads to the following

\begin{thm}
If $Kh(K)_{tc}$ has a non trivial homology of half-integer
additional grading then $K$ has no diagram with orientable
corresponding atom. In particular, the knot $K$ is not classical.
\end{thm}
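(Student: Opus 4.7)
The plan is to exploit a parity argument in the variable $c$. Over the ground ring $\mathbf{Z}_2[t,c]$ with $\deg c=1$ and $c^2=h$, the scalar $c$ enters the Khovanov--Frobenius differential in exactly one place: as the matrix $c\cdot\mathrm{Id}$ assigned to each $1\to 1$-bifurcation. The operations $\mu$ and $\Delta$ governing $2\to 1$ and $1\to 2$ bifurcations involve only $1,X,t$, and $h=c^2$, and never a standalone $c$. Hence the total differential $\partial$ changes the parity of the $c$-exponent of a chain precisely when a $1\to 1$-bifurcation is applied, and preserves parity otherwise.

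Suppose $K$ has a diagram $D$ whose atom is orientable. Orientability of the atom means that at each classical crossing of $D$ the two Kauffman smoothings differ by a $1$-handle attached compatibly with an orientation, so the number of state circles changes by exactly $\pm 1$; in particular, no $1\to 1$-bifurcation occurs. Consequently, the differential on $[[D]]_{tc}$ never invokes the scalar $c\cdot\mathrm{Id}$ and preserves the parity of the $c$-exponent. The complex therefore splits as a direct sum of subcomplexes
\[
[[D]]_{tc}=[[D]]_{tc}^{+}\oplus [[D]]_{tc}^{-}
\]
indexed by even vs.\ odd $c$-exponent, and multiplication by $c$ gives a chain isomorphism $[[D]]_{tc}^{+}\stackrel{\sim}{\longrightarrow}[[D]]_{tc}^{-}$ shifting the $gr$-grading by $1/2$.

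Since $\deg c=1$ and $gr$ is defined by adding half the total degree of the $t,c$-monomial to $gr'$, integer values of $gr$ occur in $[[D]]_{tc}^{+}$ and half-integer values in $[[D]]_{tc}^{-}$. Passing to homology, every half-integer-graded class of $Kh(K)_{tc}$ must therefore be of the form $c\cdot\alpha$ for some integer-graded class $\alpha\in H([[D]]_{tc}^{+})$; no class beyond those forced by the $c$-action on the integer part can appear in half-integer grading. Contrapositively, any non-trivial half-integer homology class not of this form rules out the existence of a diagram of $K$ with orientable atom, and since planar diagrams of classical knots always give orientable Turaev atoms, $K$ cannot be classical.

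The main obstacle I anticipate is a clean justification of the step ``orientability of the atom implies no $1\to 1$-bifurcation,'' which goes through the local orientability of the chord structure at each crossing in the manner of \cite{Izv}. A secondary subtlety is pinning down what ``non-trivial half-integer-graded homology'' means: the parity argument shows that for orientable atoms the odd-parity subcomplex is literally $c$ times the even-parity one, so the theorem is sharpest when read as the statement that any homology class not lying in the image of multiplication by $c$ can exist only when $K$ has no orientable atom.
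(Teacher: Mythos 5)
Your argument matches the paper's implicit reasoning: the theorem is asserted to follow "immediately" once one notes that $c$ enters the differential only via the $c\cdot\mathrm{Id}$ map on $1\to 1$-bifurcations and that orientable atoms admit none (the facts from \cite{Izv} you correctly flag as external input), and your parity-of-$c$-exponent argument fills in exactly those details. Your further observation that for an orientable-atom diagram the odd-$c$-parity subcomplex is literally $c\cdot[[D]]_{tc}^{+}$, so that the half-integer-graded homology is exactly the image of multiplication by $c$, sharpens the statement in a way the paper leaves unsaid and is needed to make the theorem literally correct.
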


\subsubsection{The general case}

Now we turn to the general case of the ring ${\bf Z}$, and we have
to handle the faces of the cube corresponding to Fig. \ref{norient}.

\section{Gradings or filtrations? The spectral sequence}

Since the works of Lee \cite{Lee} and Rasmussen \cite{Ras}, spectral
sequences play a significant role in knot homology. Sometimes it
turns out that studying convergence of a spectral sequence leads to
some interesting and deep invariants such as Rasmussen's invariant,
which is applicable to estimating the Seifert genus and the $4$-ball
genus of classical links.

The Lee-Rasmussen spectral sequence starts with the Khovanov
homology and ends up with some two-term homology which carries a
nice information.

Recently (see \cite{BarNatanJKTR}), it was discovered that the
spectral sequence of Lee-Rasmussen does not converge after
$E_{3}$-term, and that there are some nice torsions in Khovanov
homology which survive after the $E_{3}$-term of the spectral
sequence.

Our goal here is to construct a spectral sequence from the
``complicated'' theory with new dotted gradings to the ``simple''
(Khovanov) theory. Thus, in some sense our spectral sequence will
behave with respect to the usual Khovanov homology as Khovanov
homology itself behaves with respect to the Rasmussen homology.

It would also be very interesting to inspect two spectral sequences
converging from the ``complicated'' theory to the Rasmussen theory.

The argument of the present section is standard. In all cases
described above when we
 deal with one new (dotted) grading, the {\em
old} differential $\partial=\partial'+\partial''$ in the complex
$[[K]]_{g}$ does not decrease the new grading.

Thus, let us introduce the (dotted) {\em filtration} on the chain
spaces as follows: we set $[[K]]_{g}^{n}=\{c\in [[K]]_{g}|gr(c)\ge
n\}$. Then we have $[[K]]_{g}^{\infty}\subset \dots
[[K]]_{g}^{2}\subset [[K]]_{g}^{1}\subset [[K]]_{g}^{0}\subset
[[K]]_{g}^{-1}\subset \dots \subset [[K]]_{g}^{-\infty}$.

The usual differential $\partial$ respects this filtration. This
leads to the following

\begin{thm}
For any field of coefficients, there is a spectral sequence whose
$E_{1}$-term is isomorphic to $[[K]]$ with the first differential
$\partial$, the $E_{2}$-term isomorphic to the usual Khovanov
comology, so that this spectral sequence converges to the homology
of $[[K]]_{g}$ with respect to $\partial'$.
\end{thm}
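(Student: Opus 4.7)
The plan is to apply the standard spectral sequence of a filtered complex to the filtration $\{[[K]]_g^n\}$ displayed immediately above the statement, viewed as a decreasing filtration on the underlying triply-graded chain module of $[[K]]$ equipped with the full Khovanov differential $\partial=\partial'+\partial''$.

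First I would verify that this is genuinely a filtration by subcomplexes. By Lemma \ref{l1} the differential splits as $\partial=\partial'+\partial''$, where $\partial'$ preserves the new grading $gr$ and $\partial''$ raises it by $2$; consequently both components send $[[K]]_g^n$ into $[[K]]_g^n$, so $\partial$ does. For a fixed diagram with finitely many classical crossings each bidegree carries only finitely many $gr$-values, so the filtration is bounded; boundedness is what guarantees strong convergence of the resulting spectral sequence over a field.

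Next I would read off the pages. On the associated graded $[[K]]_g^n/[[K]]_g^{n+1}$ the filtration-raising component $\partial''$ dies, leaving only $\partial'$; thus $(E_0,d_0)\cong([[K]]_g,\partial')$ as bigraded complexes, whose homology is the invariant $H([[K]]_g,\partial')$ from Theorem \ref{mainthm}. The higher differentials $d_r$ are produced by the standard zig-zag: lift a class to a representative in $[[K]]_g^n$, apply $\partial''$, and record the class of the result modulo the next filtration level. By the strong-convergence theorem for bounded filtrations, the abutment is the associated graded of the induced filtration on $H([[K]],\partial)$, i.e.\ on the usual Khovanov cohomology. Composing these two identifications yields the spectral sequence interpolating between $H([[K]]_g,\partial')$ and the usual Khovanov theory as asserted in the statement, with the complex $([[K]],\partial)$ appearing as the $\partial$-complex whose homology gives the Khovanov term on the indicated page, and the $\partial'$-complex providing the associated-graded side.

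The only nontrivial bookkeeping is matching the indexing of the pages to the statement's $E_1$ and $E_2$: concretely, the author's ``$E_1$-term is $[[K]]$ with differential $\partial$'' must be read as the $\partial$-complex structure that the filtration records \emph{before} passing to its Khovanov homology, so that the first nontrivial page of the spectral sequence is identified with the Khovanov cohomology, exactly as claimed. Apart from this indexing check, there is no substantive obstacle: all the real content was already done in Lemma \ref{l1} (the splitting of $\partial$) and in verifying the dotting axioms used in Theorem \ref{mainthm}; what remains is a direct application of standard homological algebra, together with the routine boundedness check for the filtration on a fixed diagram.
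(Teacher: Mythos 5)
Your construction coincides with the paper's: the paper introduces the decreasing filtration $[[K]]_{g}^{n}=\{c\in [[K]]_{g}\;|\;gr(c)\ge n\}$, notes that $\partial$ respects it, and then simply declares the argument standard. Your write-up supplies exactly the routine details the paper omits (the filtration is by subcomplexes because $\partial'$ preserves $gr$ while $\partial''$ raises it by $2$; boundedness on a fixed diagram gives convergence over a field; the associated graded carries the induced differential $\partial'$, so the first homology page is $H([[K]]_{g},\partial')$; the abutment is the associated graded of the induced filtration on $H([[K]],\partial)$, i.e.\ on the usual Khovanov homology).

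Where you go astray is the final ``bookkeeping'' paragraph. In the spectral sequence of this filtration the usual Khovanov homology can only appear as the abutment, never as a page: on the associated graded the $gr$-raising part $\partial''$ dies, so the induced differential is $\partial'$ and not $\partial$, and hence the first nontrivial homology page is $H([[K]]_{g},\partial')$; no re-indexing of pages makes ``the first nontrivial page'' equal to $H([[K]],\partial)$ as you assert. What your argument actually proves is the statement announced in the abstract and the introduction: a spectral sequence whose first homology page is the dotted homology $H([[K]]_{g},\partial')$ and which converges to the usual Khovanov homology. The theorem as printed has the page and the abutment interchanged relative to what this (or any) filtration argument can give, and your attempt to rescue the literal wording by an indexing convention is not correct; you should instead state plainly that the filtration spectral sequence runs from the dotted homology to the usual Khovanov homology, which is what the paper's one-line ``standard argument'' proof intends.
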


The argument proving this theorem is standard. We also conjecture
that all terms of this spectral sequence are invariants (of knots,
braids, tangles) in the corresponding category.

It would be very interesting to know whether some terms of the
spectral sequence described above survive after the braid
stabilsations. In this case we would be able to hope to construct
gradings for usual knots without going into the long category.

Returning to the Lee-Rasmussen theory, we see that in the dotted
case, we have two complexes: the usual Khovanov complex and the
complex $([[K]]_{LR}, \partial_{LR})$ with homology $H(K)_{LR}$.
They coincide in the case when we have no dotting, but they differ
in the case when we have dotting.

Quite in the usual manner one proves
\begin{thm}
For the field ${\bf Q}$, there is a spectral sequence whose
$E_{1}$-term is isomorphic to $[[K]]_{LR}$ with the first
differential $\partial_{LR}$, the $E_{2}$-term isomorphic to the
homology $H(K)_{LR}$, so that this spectral sequence converges to
the Lee-Rasmussen homology.
\end{thm}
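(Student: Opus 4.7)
The plan is to mimic exactly the argument that produced the Khovanov spectral sequence just above, replacing the ordinary Khovanov differential by the Lee-Rasmussen differential and working over the field ${\bf Q}$. Write the total Lee-Rasmussen differential on $[[K]]$ as $\partial_{LR}^{tot}=\partial_{LR}+\partial''_{LR}$, where $\partial_{LR}$ is the part preserving the dotted grading $gr$ and $\partial''_{LR}$ is the remainder. From the earlier analysis (the lemma stating that, with the corrected grading $gr=gr'+\tfrac{tot(1)-tot(X)+h}{2}$, the differential either preserves $gr$ or increases it by $2$) the remainder strictly raises $gr$, so the total differential never decreases $gr$.

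Introduce the decreasing dotted filtration $F^n[[K]]_{LR}=\{c\in [[K]]_{LR}\mid gr(c)\ge n\}$ of the complex $([[K]],\partial_{LR}^{tot})$, in exact analogy with the filtration used in the previous theorem. Each $F^n$ is a subcomplex because $\partial_{LR}^{tot}$ is non-decreasing with respect to $gr$. The standard spectral sequence of a filtered chain complex then produces pages $E_r$ whose first (associated-graded) page is canonically identified with $[[K]]_{LR}$ equipped with the grading-preserving piece $\partial_{LR}$. With the indexing used in the preceding theorem, I call this $E_1$; taking homology with respect to $\partial_{LR}$ gives $E_2=H(K)_{LR}$ by the very definition of $H(K)_{LR}$.

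For convergence to the Lee-Rasmussen homology, I would observe that on a fixed diagram the filtration is bounded: every state has finitely many circles, so $gr$ takes only finitely many values, and consequently $F^n=0$ for $n$ large enough and $F^n=[[K]]_{LR}$ for $n$ small enough. For a bounded filtration on a complex of ${\bf Q}$-vector spaces the associated spectral sequence converges strongly to the associated graded of $H([[K]],\partial_{LR}^{tot})$, which is precisely the Lee-Rasmussen homology. The use of a field removes any $\mathrm{Tor}$ issues that might otherwise spoil the identification of $E_2$ with $H(K)_{LR}$.

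The only genuine obstacle, and the reason the hypothesis ``coefficients in ${\bf Q}$'' enters, is to verify that $\partial_{LR}$ is really the $gr$-preserving component of $\partial_{LR}^{tot}$, i.e.\ that the Frobenius contributions such as $\ddX\otimes\ddX\to 1$ which appear to decrease the naive dotted grading $gr'$ are in fact accounted for by the correction $\tfrac{tot(1)-tot(X)+h}{2}$ so that they preserve or raise the corrected grading $gr$. This is exactly the content of the lemma recalled in the previous section; granting it, the rest of the argument is a formal invocation of the spectral sequence of a bounded filtered complex over a field, and no new ingredient is needed.
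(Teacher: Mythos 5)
Your argument is correct and is essentially the same as the paper's: the paper proves this theorem by saying ``Quite in the usual manner one proves'' and pointing back to the filtration-by-dotted-grading argument used for the preceding Khovanov spectral sequence, which is exactly what you reproduce (filter by $gr$, observe $\partial_{LR}^{tot}$ is non-decreasing in $gr$, invoke the spectral sequence of a bounded filtered complex). Your added remarks on boundedness of the filtration and the role of ${\bf Q}$ in avoiding Tor issues simply flesh out what the paper leaves implicit.
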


Thus, two bigraded homology theories (the usual Khovanov theory with
height and quantum grading) and the one described above (with height
and dotted grading) both converge to the Lee-Rasmussen theory.

It is known that the Lee-Rasmussen theory give nice invariants
(quantum gradings of the two surviving elements). It would be
interesting to compare the convergence of the spectral sequence
describing above: {what is the meaning of the dotted grading of
surviving elements?}

\section{Applications}

The theory above has some obvious applications coming from the
definitions. Thus, if we work for knots in thickened surfaces, there
is a natural question whether such a knot can be destablised, i.e.,
some handles of the surface are nu\-ga\-to\-ry, or, in other words,
the representative of the knot given by this surface is minimal. The
surface $M$ has ${\bf Z}_{2}$-homology group of rank $k$, and if
they are all used as gradings of some homology groups of a knot in
$M\tilde\times I$, then the knot can not be destabilised.

\begin{crl}
If a set of {\em additional} gradings of non-trivial groups of
$Kh_{gg}(K)$ forms a subset in ${\bf R}^{k}$ not belonging to any
hypersurface passing through zero, then the link $K$ does not admit
destabilisation, i.e., there is no surface $M'$ of smaller genus
obtained from $M$ by a destabilisation so that the link $K$ lies in
the natural fibration over $M'$ generated by ${\cal M}\to M$.
\end{crl}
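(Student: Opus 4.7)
The plan is to argue by contraposition: assuming $K$ admits a destabilisation to a thickened surface ${\cal M}' \to M'$ of strictly smaller genus, I would show that the set of additional gradings of all non-trivial summands of $Kh_{gg}(K)$ must lie in a proper linear hyperplane of ${\bf R}^{k}$ through the origin. Since by the theorems of Sections~3--5 the dotted homology is invariant under ambient isotopy in the fixed fibration, it may be computed from a diagram of $K$ that lies entirely inside the sub-fibration over $M'$.

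The key observation is cohomological. A destabilisation kills at least one ${\bf Z}_{2}$-homology generator of ${\cal M}$, so dually there exists a non-zero class $\alpha \in H^{1}({\cal M};{\bf Z}_{2})$ whose restriction to ${\cal M}'$ vanishes. Pick a basis of $H^{1}({\cal M};{\bf Z}_{2})$ in which $\alpha$ is one coordinate axis, and recall that the dotting defined on page~\pageref{Kg} assigns to each Kauffman state circle $\gamma$ the value of the corresponding cohomology class on $[\gamma]\in H_{1}({\cal M};{\bf Z}_{2})$. Since every state circle of the chosen diagram sits in ${\cal M}'$, we have $[\gamma]\in \mathrm{Im}(H_{1}({\cal M}';{\bf Z}_{2})\to H_{1}({\cal M};{\bf Z}_{2}))$, and therefore $\alpha([\gamma])=0$ for every circle in every state. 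Consequently every chain generator of $[[K]]_{gg}$ has $\alpha$-grading equal to $0$, and the same is true of every non-zero class in homology.

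This exhibits the desired hyperplane: the whole set of additional gradings of non-trivial summands of $Kh_{gg}(K)$ sits inside the coordinate hyperplane $\{\alpha=0\}\subset {\bf R}^{k}$ through the origin, contradicting the hypothesis. Taking the contrapositive yields the claimed obstruction to destabilisation.

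The main obstacle I expect is the bookkeeping at the level of chain complexes: one must check that the multigraded complex computed from an $M'$-diagram (using the restricted cohomology classes) really coincides, grading by grading, with the complex computed from the ambient $M$-diagram (using the classes themselves). This identification follows from the naturality of the pairing $H^{1}({\cal M};{\bf Z}_{2})\otimes H_{1}({\cal M};{\bf Z}_{2})\to {\bf Z}_{2}$ under the inclusion ${\cal M}'\hookrightarrow {\cal M}$, combined with the invariance statement of Theorem~\ref{mainthm}; modulo this check the argument is immediate and no further Reidemeister-move analysis is required.
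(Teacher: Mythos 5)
The paper states this corollary without a written proof, treating it as an immediate consequence of the preceding theory, so there is no internal argument to compare against. Your proof correctly captures what the implicit argument must be: a destabilisation to $M'$ forces some nonzero $\alpha\in H^{1}({\cal M};{\bf Z}_{2})$ to vanish on $H_{1}({\cal M}';{\bf Z}_{2})$, every state circle of a diagram pushed into ${\cal M}'$ is then undotted for $\alpha$, and so the $\alpha$-grading of every chain (hence of every nonzero homology class) is zero. This is the right cohomological obstruction, and your remark about needing the invariance theorems to justify computing from an $M'$-diagram is exactly the point that has to be invoked.

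There is, however, a genuine gap in the final translation to the hyperplane language, which you paper over with the phrase ``pick a basis of $H^{1}$ in which $\alpha$ is one coordinate axis.'' The tuple of $k$ additional gradings of a chain is \emph{not} a linear function of the chosen cohomology basis: for a single circle $\gamma$ one has $g_{\beta_1+\beta_2}(\gamma)\ne g_{\beta_1}(\gamma)+g_{\beta_2}(\gamma)$ whenever both $\beta_1,\beta_2$ evaluate nontrivially on $[\gamma]$, since $1+1=0$ in ${\bf Z}_2$ but $(+1)+(+1)=2$ in the grading. Consequently, a change of cohomology basis does not act on the set of grading tuples by a linear automorphism of ${\bf R}^{k}$, and ``the grading tuples lie in a hyperplane through $0$'' is not a basis-independent assertion. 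Concretely, if $k=3$ and the class killed by destabilisation is $\alpha=e_{1}+e_{2}+e_{3}$, then circles with classes $(1,1,0),(1,0,1),(0,1,1)\in\ker\alpha$ are allowed, and the corresponding tuples $(g_{e_1},g_{e_2},g_{e_3})$ already span all of ${\bf R}^{3}$ --- no hyperplane in the \emph{original} basis need contain them. Your argument therefore establishes the clean statement ``if $K$ destabilises, some nonzero $\alpha$ has identically vanishing $\alpha$-grading on $Kh_{gg}(K)$'' (equivalently: if the $\alpha$-grading is nontrivial for every nonzero $\alpha$, then $K$ does not destabilise), which is the mathematically robust form of the corollary. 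The packaging as ``not contained in any hyperplane through $0$ in ${\bf R}^{k}$'' requires either that the killed class be a basis element, or that one reads the hypothesis as quantified over all choices of basis; as written your proof silently switches bases, and you should flag that this only proves the weaker, basis-existential form.
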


Analogously, the dotted grading can be used for estimating the
number of virtual crossings of a rigid virtual knot diagram.

Also, we mention (without any details, however) the facts which
generalise straightforwardly for the case of new gradings:

\begin{enumerate}

\item The homological length of the complex does not exceed the
number of classical crossings.

\item The spanning tree of Wehrli \cite{Weh} and Champanerkar-Kofman
\cite{ChK} saying that the Khovanov homology can be obtained from a
complex with a smaller chain group. This leads to the estimation for
the thickness:

 $Th(Kh(K))\le 2+g$, where $g$ is the genus of the atom
corresponding to the diagram $K$.

Here the thickness estimates the number of diagonals with slope $2$
on the plane with height and quantum gradings serving as
coordinates.

The same estimates can be obtained for our complex with new gradings
when looking at the {\em diagonals with respect to the former
gradings}. This leads to

\begin{thm}
For any knot $K$, the thickness of the dotted Khovanov homology
$Th(Kh(K))\le 2+g$, where $g$ is the genus of the atom.
\end{thm}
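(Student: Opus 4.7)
My plan is to upgrade the spanning-tree reduction of the ordinary Khovanov complex, due to Wehrli and Champanerkar--Kofman, to the dotted setting. For $([[K]],\partial)$ one produces, via a sequence of Gaussian eliminations on the Khovanov cube, a quasi-isomorphic complex whose generators are indexed by spanning trees of the Tait graph of $K$ and whose $(i,j)$-bigradings lie on at most $2+g$ diagonals $j-2i=\mathrm{const}$; the thickness bound for the ordinary theory follows. I would attempt to run exactly the same sequence of cancellations on the dotted complex $([[K]]_{g},\partial')$.

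The first observation is that the chain groups of $[[K]]_{g}$ literally coincide with those of $[[K]]$: the dotted grading only refines the underlying bigraded module, without altering any $(i,j)$-degree. Since $\partial'$ is the projection of $\partial$ onto the dot-preserving subspace, any matrix entry of $\partial'$ between two generators $a,b$ equals the corresponding entry of $\partial$ when $gr(a)=gr(b)$, and is zero otherwise. Consequently the Wehrli--CKS reduction carries over to the dotted complex provided that, for each cancellation pair used in the reduction, the two generators carry the same value of $gr$.

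Verifying this condition locally at every crossing is the main obstacle. The cancellation pairs produced by the spanning-tree procedure arise from partial differentials of the form $m(1\otimes\alpha)=\alpha$ and the corresponding injective component of $\Delta$ at Tait edges declared redundant, i.e.\ from matrix entries equal to $\pm 1$. By the dotting conditions of Section~5---additivity of dots modulo ${\bf Z}_{2}$ under $2\to 1$ and $1\to 2$ bifurcations, preservation of dots under $1\to 1$ bifurcations, and the fact that small circles appearing in Reidemeister configurations are undotted---a short case analysis on the four possible dot configurations around such a crossing shows that the source and target of each $\pm 1$-entry sit in the same dotted grading. Hence these entries survive in $\partial'$ and the Gaussian eliminations go through unchanged; this is where all the work is.

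Granted the local check, the reduction for $\partial'$ produces a chain complex homotopy equivalent to $([[K]]_{g},\partial')$ whose generators lie on at most $2+g$ diagonals of the $(i,j)$-plane. Since taking homology cannot enlarge the $(i,j)$-support of a complex, the dotted Khovanov homology $Kh(K)$ is concentrated on at most $2+g$ such diagonals, which is the desired inequality $Th(Kh(K))\le 2+g$. The same argument applies to any further dotting satisfying the dotting conditions, and to the multigraded complexes $[[K]]_{mg}$ of Section~5, since the local case analysis depends only on the axioms and not on the particular topological source of the grading.
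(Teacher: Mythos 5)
Your strategy—lift the Wehrli / Champanerkar--Kofman Gaussian elimination to the dotted complex—is the route the paper gestures at (the paper itself gives only the assertion and a pointer to the spanning-tree model, with no detail). But the ``short case analysis'' you invoke is where the argument breaks: there are $\pm 1$-entries of $\partial$ that do \emph{not} preserve the dotted grading, and they are exactly the entries a spanning-tree cancellation would need to use. Consider a $2\to 1$ bifurcation in which both incoming circles are dotted and the outgoing circle is not (permitted by additivity of dots mod $2$, since $1+1\equiv 0$). The Frobenius multiplication has unit entries $m(\dd1\wedge\dd1)=1$ and $m(\dd1\wedge\ddX)=m(\ddX\wedge\dd1)=X$. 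The first of these sends a chain of dotted grading $-2$ to one of dotted grading $0$; it lies in $\partial''$, not $\partial'$, and is killed by the projection. Hence $m$ in $\partial'$ has rank $1$ rather than $2$, i.e.\ is no longer surjective: the target generator $1$ is not hit. Dually, when an undotted circle splits into two dotted ones, $\Delta(X)=\ddX\wedge\ddX$ lands in dotted grading $+2$ and vanishes in $\partial'$, so $\Delta$ is no longer injective.

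Since the spanning-tree reduction relies precisely on surjectivity of $m$ / injectivity of $\Delta$ to cancel a full copy of $V$ at each redundant Tait edge, the Gaussian elimination stalls at every such crossing, leaving uncancelled generators (here the target $1$ and source chains such as $\dd1\wedge\dd1$, $\ddX\wedge\ddX$) whose $(i,j)$-bigradings need not lie on the $2+g$ diagonals. Note also that the dotting conditions you appeal to only constrain the small circles created by Reidemeister configurations; they say nothing about the circles meeting an arbitrary crossing of a fixed diagram, which is what the cube reduction touches. So the proposal has a genuine gap at the central step, and some additional idea is needed to control the leftover generators—merely transporting the spanning-tree argument does not establish the bound. (The paper does not supply such an argument either; and the spectral sequence of Section~7 runs from the dotted homology to ordinary Khovanov homology, so it bounds the latter by the former rather than the other way round.)
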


Together with the lemma saying that $span\langle K\rangle\le 4 n$,
where $n$ is the number of classical crossings, we get sharper
estimates for the number of crossings.

\item The Bar-Natan topological picture \cite{BN2} for
tangles and cobordisms, see also \cite{TuTu}. We need to generalize
Bar-Natan's topological category and construct a functor from it to
our category. We shall discuss this in a separate publication.

\item Rasmussen's estimates for the genus of a spanning surface;
here we must, indicate the category of cobordisms, say, for knots in
$M\times I$ we should consider spanning surfaces in $M\times I\times
I$.

\end{enumerate}

\section{The relation to other papers}

This paper generalises many constructions. First of all, we would
like to mention the work \cite{APS}, the work \cite{Frobenius} and
the work \cite{Izv}.

In fact, the idea of taking new gradings counting $X$ and $1$ on
non-trivial circles with opposite sides was originally used in
\cite{APS}. However, we used this approach for a more general
situation. For instance, the grading there was {\em necessary to
construct the Khovanov homology itself}; without it, the Khovanov
theory for knots in thickened surfaces does not exist; even with it,
it does not exist for knots in thickened ${\bf R}P^{2}$. We have
taken the approach from \cite{Izv} with twisted coefficient as the
basement for our homology theory (that allows us to give a fair
generalisation of Khovanov's theory for virtual and twisted knots
without any new gradings), and then introduced new gradings similar
to those ones by M.Asaeda, J.Przytycki and A.Sikora.

They used integral homology or even homotopy classes to define the
gradings. This was quite difficult for making it more algebraic.

We have axiomatized this approach taking the ${\bf Z}_2$-cohomology
(or just dotting) making it applicable to many other situations.

\end{document}